\font\tencyr=wncyr10 
\def\rus{\tencyr\cyracc}
\numberwithin{equation}{section}
\newtheorem{thm}{Theorem}[section]
\newtheorem{lm}[thm]{Lemma}
\newtheorem{cl}[thm]{Corollary}
\newtheorem{conj}[thm]{Conjecture}
\newtheorem{prop}[thm]{Proposition}
\theoremstyle{remark}
\newtheorem{rmk}{Remark}[section]
\newtheorem{ex}{Example}[section]
\theoremstyle{definition}
\newcommand{\eus}{\EuScript}
\newcommand {\be}{{\mathfrak b}}
\newcommand {\ce}{{\mathfrak c}}
\newcommand {\g}{{\mathfrak g}}
\newcommand {\h}{{\mathfrak h}}
\newcommand {\ka}{{\mathfrak k}}
\newcommand {\el}{{\mathfrak l}}
\newcommand {\me}{{\mathfrak m}}
\newcommand {\n}{{\mathfrak n}}
\newcommand {\fN}{{\mathfrak N}}
\newcommand {\p}{{\mathfrak p}}
\newcommand {\q}{{\mathfrak q}}
\newcommand {\rr}{{\mathfrak r}}
\newcommand {\es}{{\mathfrak s}}
\newcommand {\te}{{\mathfrak t}}
\newcommand {\ut}{{\mathfrak u}}
\newcommand {\fX}{{\mathfrak X}}
\newcommand {\z}{{\mathfrak z}}
\newcommand {\sln}{{\mathfrak{sl}}_n}
\newcommand {\slno}{{\mathfrak{sl}}_{n+1}}
\newcommand {\spn}{{\mathfrak{sp}}_{2n}}
\newcommand {\son}{{\mathfrak {so}}_{n}}
\newcommand{\gt}{\mathfrak}
\newcommand {\gA}{{\eus A}}
\newcommand {\gS}{{\eus S}}
\newcommand {\gZ}{{\eus Z}}
\newcommand {\esi}{\varepsilon}
\newcommand {\ap}{\alpha}
\newcommand {\lb}{\lambda}
\newcommand {\vp}{\varphi}
\newcommand {\tvp}{\tilde\varphi}
\newcommand {\tc}{\tilde c}
\newcommand {\tr}{\tilde r}
\newcommand {\tG}{\tilde G}
\newcommand {\tO}{\tilde \co}
\newcommand {\tH}{\tilde H}
\newcommand {\tme}{\tilde\me}
\newcommand {\hhe}{\hat{\h}}
\newcommand {\co}{{\mathcal O}}
\newcommand {\BQ}{{\mathbb Q}}
\newcommand {\VV}{{\mathsf V}}
\newcommand {\WW}{{\mathsf W}}
\newcommand {\md}{/\!\!/}
\newcommand {\isom}{\stackrel{\sim}{\longrightarrow}}
\newcommand {\codim}{{\mathrm{codim\,}}}
\newcommand {\dif}{{\mathrm{def\,}}}
\newcommand {\ed}{{\mathrm{ed\,}}}
\newcommand {\hd}{{\mathrm{hd\,}}}
\newcommand {\ind}{{\mathrm{ind\,}}}
\newcommand {\Lie}{{\mathsf{Lie\,}}}
\newcommand {\rk}{{\mathrm{rk\,}}}
\newcommand {\spe}{{\mathsf{Spec\,}}}
\newcommand {\trdeg}{{\mathrm{trdeg\,}}}
\newcommand {\tri}{\mathfrak{sl}_2}
\newcommand {\GR}[2]{{\textrm{{\sf\bfseries #1}}}_{#2}}
\newcommand {\ov}{\overline}
\newcommand {\un}{\underline}
\newcommand {\ngv}{{\fN}_G(\VV)}
\newcommand {\bh}{{\boldsymbol{h}}}
\newcommand {\fnme}{\fN_H(\me)}
\newcommand {\fhg}{\fN_H(\g)}
\newcommand {\fng}{\fN_G(\g)}
\newcommand {\fngs}{\fN_G(\g^*)}
\newcommand {\beq}{\begin{equation}}
\newcommand {\eeq}{\end{equation}}
\renewcommand{\le}{\leqslant}
\renewcommand{\ge}{\geqslant}
\newcommand{\odin}{\mathrm{{1}\!\! 1}}
\newcommand {\bbk}{\Bbbk}
\begin{document}
\hfill {\scriptsize May 3, 2024}
\vskip1ex

\title[Coisotropy representation]{Orbits and invariants for coisotropy representations}
\author[D.\,Panyushev]{Dmitri I. Panyushev}
\address{Institute for Information Transmission Problems, Moscow 127051, Russia}
\email{panyush@mccme.ru}
\keywords{Isotropy representation, quasiaffine homogeneous space, complexity of action}
\subjclass[2020]{14L30, 14R20, 14M27, 17B20}
\begin{abstract}
For a subgroup $H$ of a reductive group $G$, let $\me=\h^\perp\subset\g^*$ be the cotangent 
space of $\{H\}\in G/H$. The linear action $(H:\me)$ is the {\it coisotropy representation\/}. It is known 
that the complexity and rank of $G/H$ (denoted $c$ and $r$, respectively) are encoded in properties of 
$(H:\me)$. We complement existing results on $c$, $r$, and $(H:\me)$, especially for quasiaffine 
varieties $G/H$. For instance, if the algebra of invariants $\bbk[\me]^H$ is finitely generated, then 
$\fnme\subset \me\cap\fngs$. Moreover, if $G/H$ is affine, then $\fnme=\me\cap\fngs$ if and only if 
$c=0$. We also prove that the variety $\me\cap\fngs$ is pure, of dimension $\dim\me-r$. Two other 
topics considered are {\sf (i)} a relationship between varieties $G/H$ of complexity at most $1$ and the 
homological dimension of the algebra $\bbk[\me]^H$ and {\sf (ii)} the  Poisson structure of $\bbk[\me]^H$ 
and Poisson-commutative subalgebras $\gA\subset \bbk[\me]^H$ such that $\trdeg\gA$ is maximal.
\end{abstract}
\maketitle

\section*{Introduction}     
\label{sect:intro}

\noindent
In this article, we study invariant-theoretic properties for the coisotropy representation of a homogeneous space of a reductive group $G$.
The ground field $\bbk$ is algebraically closed and $\mathrm{char\,}\bbk=0$. All groups and varieties are 
assumed to be algebraic, and all algebraic groups are affine. If $Q$ is a group and $X$ is a variety, then 
the notation $(Q:X)$ means that $Q$ acts regularly on $X$. We also say that $X$ is a $Q$-{\it variety}. 
Lie algebras of algebraic groups are denoted by the corresponding small gothic letters, e.g., $\q=\Lie Q$. 

Throughout, $G$ is a connected reductive group and $\g=\Lie G$.  
We also consider a Borel subgroup $B\subset G$, the maximal unipotent subgroup $U=(B,B)$, and 
a maximal torus $T\subset B$. This yields a bunch of related objects: roots, weights, simple roots, etc.
For a reductive subgroup $H\subset G$, we denote by $B_H$, $U_H$, and $T_H$ analogous subgroups 
of $H$.

For a subgroup $H\subset G$, let $c=c_G(G/H)$ and $r=r_G(G/H)$ be the {\it complexity\/} and 
{\it rank\/} of the $G$-variety $G/H$, respectively. Then $r\le \rk G$ (see Section~\ref{sect:general} for 
details.) These two integers are important for invariant theory and theory of equivariant embeddings of 
$G/H$. Let $\me=\h^\perp\subset \g^*$ be the cotangent space of $\{H\}\in G/H$. The linear action 
$(H:\me)$ is called the {\it coisotropy representation\/} of $H$ (or $G/H$). It is shown in \cite{p90} that
\\ \indent
\textbullet \  the integers $c$ and $r$ are closely related to properties of $(H:\me)$.
If $G/H$ is quasiaffine, then $\dim\me-\max_{x\in\me}\dim H{\cdot}x=2c+r$, the stabiliser $H^x$ is reductive for generic $x\in\me$,  and $\rk G-\rk H^x=r$.
\\ \indent
\textbullet \ If $c=0$ and $\ce\subset\me$ is a {\it Cartan subspace}, then $\ov{H{\cdot}\ce}=\me$ and 
there is a finite group $W\subset GL(\ce)$ such that $\bbk[\ce]^W\simeq \bbk[\me]^H$. Here the  
morphism $\pi: \me\to \me\md H:=\spe(\bbk[\me]^H)$ is equidimensional, and if
$H$ is connected, then $\bbk[\me]^H$ is a polynomial ring. 

More generally, if $\bbk[\me]^H$ is finitely generated, then $\pi$ is also well-defined and the fibre 
$\pi^{-1}(\pi(0))=:\fnme$ is the {\it nullcone} (in $\me$ with respect to $H$). The nullcone is a fibre
of $\pi$ of maximal dimension and if 
$c>0$, then $\pi$ is not necessarily equidimensional. It is convenient to consider the {\it defect\/} of equidimensionality (= {\it defect\/} of $\fnme$)
\[
   \dif\fnme=\dim\fnme-(\dim\me-\dim\me\md H) .  
\]
If $H$ is reductive, then $\dif\fnme\le c$~\cite[Prop.\,3.6]{p95}.

In Section~\ref{sect:new1}, we present new results  related to the 
nullcones $\fnme$ and $\fngs$, and the generalised Cartan subspace $\ce\subset\me$. 
\\ \indent
{\bf --} \  
For any $x\in \me$, we show that $\dim G{\cdot}x\ge 2\dim H{\cdot}x$, and the equality occurs if and only 
if $\g{\cdot}x\cap\me=\h{\cdot}x$. Another general property is that
$\dim \bigl(\me\cap\fngs\bigr)=\dim\me -r$ and all irreducible components of $\me\cap\fngs$ have this dimension (Theorem~\ref{thm:peresechenie}).
\\ \indent
{\bf --} \ {\em If\/ $G/H$ is quasiaffine}, then one can define a generalised Cartan subspace 
$\ce\subset\me$ (see Section~\ref{sect:general}) and we prove that
$\codim_\me \ov{H{\cdot}\ce}=c$.
\\ \indent
{\bf --} \ {\em If\/ $G/H$ is quasiaffine and $\bbk[\me]^H$ is finitely generated}, then 
$\fnme\subset \me\cap\fngs$. Moreover, if $c=0$, then $\fnme=\me\cap\fngs$ 
(Theorem~\ref{thm:null-c}).  
\\ \indent
{\bf --} \ {\em If\/ $G/H$ is affine}, then $\fnme=\me\cap\fngs$ if and only if $G/H$ is spherical
(Prop.~\ref{prop:granitsy}). We also prove that if $\dif\fnme=c$, then $\h$ contains a regular semisimple
element of $\g$ (Theorem~\ref{thm:s-reg}). In particular, this applies to the affine homogeneous spaces
with $c=0$.

In Section~\ref{sect:new2}, affine homogeneous spaces of the form $\tO=(G\times H)/\Delta_H$ are studied. Here $H\subset G$ is reductive and $\Delta_H$ is the diagonal in 
$H\times H\subset G\times H=\tG$. Then $\tO\simeq G$ and the isotropy representation of 
$\Delta_H\simeq H$ is identified with the $H$-module $\g$. In this 
case, $\tO$ has a group structure, $r_{\tG}(\tO)=\rk\tG$ is maximal, and $\dim\fhg$ can be 
computed via a result of R.\,Richardson~\cite{r89}.
Here we present some complements to results of Section~\ref{sect:new1}.

In Section~\ref{sect:defect}, we consider coisotropy representations $(H:\me)$ with $\dif\fnme\le 1$. If
$c=0$, then $\pi$ is equidimensional and $\me\md H\simeq\mathbb A^n$. This can be regarded as an
illustration to the Popov conjecture~\cite{po76}. In~\cite{p95}, we stated a related conjecture that if $H$ is connected reductive and $c=1$, then $\me\md H$ is either an affine space or a hypersurface. We verify this in two cases:

(a) \ for the homogeneous spaces $G/H$ with simple $G$;

(b) \ for the homogeneous spaces $\tO=(G\times H)/\Delta_H$ with $c_{\tG}(\tO)=1$, where $G$ is simple.
\\
In both cases, classifications of such pairs $(G,H)$ are known (see~\cite{p92} for (a) and \cite{ap02} 
for (b)), and we perform a case-by-case verification.

In Section~\ref{sect:poisson}, $H$ is reductive and the natural Poisson bracket $\{\ ,\,\}$ on the affine 
variety $\me\md H$ is considered. Let $\gZ$ be the Poisson centre of $(\bbk[\me]^H, \{\ ,\,\})$. Then there 
is the natural morphism $\boldsymbol{f}:\me\to \spe\gZ$. Using  results of F.\,Knop~\cite{kn90},
we prove that $\boldsymbol{f}$ is equidimensional and $\boldsymbol{f}^{-1}(0)=\me\cap\fngs$. 

It is shown that if a subalgebra $\gA\subset \bbk[\me]^H$ is Poisson-commutative, then $\trdeg \gA\le 
c+r$. We conjecture that there is always such a subalgebra with $\trdeg \gA= c+r$. Some partial results
towards this conjecture are described.

Our basic reference for Invariant Theory is~\cite{vp89}.

\noindent
{\bf Data availability and conflict of interest statement.}

This article has no associated data. There is no conflict of interest.

\section{Generalities on group actions and coisotropy representations}
\label{sect:general}

\noindent
Let $\bbk[X]$ denote the algebra of regular functions on a variety $X$. If $X$ is irreducible, then 
$\bbk(X)$ is the field of rational functions on $X$. If $X$ is acted upon by $Q$, then $\bbk[X]^Q$ and 
$\bbk(X)^Q$ are the subalgebra and subfield of invariant functions, respectively. The identity component 
of $Q$ is denoted by $Q^o$. For $x\in X$, let $Q^x$ denote the {\it stabiliser} of $x$ in $Q$. Then 
$\q^x=\Lie Q^x$. A stabiliser $Q^x$ is said to be {\it generic}, if there is a dense open subset 
$\Omega\subset X$ such that $Q^y$ is $Q$-conjugate to $Q^x$ for all $y\in\Omega$. We say that a 
property ({\bf P}) holds for {\it almost all points of\/} $X$, if there is a dense open subset $X_0\subset X$ 
such that ({\bf P}) holds for all $x\in X_0$.

\subsection{Complexity and rank}
\label{subs:c-and-r}
Let $X$ be an irreducible $G$-variety. Then 
\\ \indent {\bf --} \ the {\it complexity} of $X$ is $c_G(X) =\dim X-\max_{x\in X}\dim B{\cdot}x$,   
\\ \indent  {\bf --} \
the {\it rank} of $X$ is $r_G(X) =\max_{x\in X}\dim B{\cdot}x- \max_{x\in X}\dim U{\cdot}x$.

By the Rosenlicht theorem (see e.g. \cite[\S\,2.3]{vp89}), we also have
\[
    \text{$c_G(X)=\trdeg\, k(X)^{B}$ and $c_G(X)+r_G(X)=\trdeg\, k(X)^{U}$}.
\]
An alternate approach to the rank uses the weights of $B$-semi-invariants in $\bbk(X)$. For quasiaffine 
varieties, this boils down to the following. Write $\fX_+=\fX_+(G)$ for the set of dominant weights of $G$ 
with respect to $(B,T)$. Let $\VV_\lb$ denote a simple $G$-module with highest weight $\lb\in\fX_+$. Let 
$\bbk[X]=\bigoplus_{\lb\in\fX_+}\Bbbk[X]_{(\lb)}$ be the sum of $G$-isotypic components, where
$\Bbbk[X]_{(0)}=\bbk[X]^G$. Then 
\[
   \Gamma_X=\{\lb\in \fX_+\mid \bbk[X]_{(\lb)}\ne 0\}
\]
is the {\it rank monoid} of $X$ and $r_G(X)=\dim_\BQ(\BQ\Gamma_X)$. Clearly $r\le \rk\g$. If 
$c_G(X)=0$, then $X$ is said to be a {\it spherical\/} $G$-variety. If $\bbk[X]^G=\bbk$, then 
$\dim \bbk[X]_{(\lb)}<\infty$ for all $\lb$ and $m_\lb(X):=\dim \bbk[X]_{(\lb)}/\dim \VV_\lb$ is the {\it multiplicity} 
of $\VV_\lb$ in  $\bbk[X]$ (=\,the multiplicity of $\lb$ in $\Gamma_X$). 

\subsection{The coisotropy representation}  
\label{subs:coiso}
Let $H$ be an algebraic subgroup of $G$ with $\Lie H=\h$. Then $\g/\h\simeq\mathsf{T}_{\{H\}}(G/H)$ is 
an $H$-module and the linear action $(H:\g/\h)$ is the {\it isotropy representation} of $H$. Set 
$\me=\h^\perp=\{\xi\in\g^*\mid \xi\vert_\h=0\}$. Then $\me\simeq (\g/\h)^*$ as $H$-module and the linear 
action $(H:\me)$ is the {\it coisotropy representation} of $H$. If necessary, we identify $\g$ 
and $\g^*$ using a non-degenerate $G$-invariant bilinear form $\Psi$ on $\g$ and regard  $\me$ 
as subspace of $\g$. 

Recall that, for a reductive $G$, $G/H$ is affine if and only if $H$ is reductive. Another equivalent 
condition is that the form $\Psi$ is non-degenerate on $\h$. In this case, $\me\simeq \g/\h$ and 
$\g\simeq \h\oplus\me$ as $H$-module. Then 
the linear action $(H:\me)$ will be referred to as the {\it isotropy representation} of $H$.

We always assume that the $G$-action on $G/H$ has a finite kernel.
This is tantamount to saying that $H$ contains no infinite normal subgroups of $G$. This 
condition is always satisfied, if $G$ is simple. 
If $G/H$ is spherical, then $H$ is said to be a {\it spherical} subgroup of $G$. For simplicity, we write $c$ 
and $r$ for the complexity and rank of the homogeneous space $G/H$. 

\begin{thm}[{~\cite{p90}, \cite[Ch.\,2]{p99}}]    \label{thm:coiso-props}
If\/ $G/H$ is quasiaffine, then 
\begin{enumerate}
\item \ There is a generic stabiliser for $(H:\me)$, say $S$, which is reductive;
\item \ $\dim G+\dim S-2\dim H=\dim\me-\max_{x\in\me}\dim H{\cdot}x=2c+r$;
\item \ $\rk G-\rk S=r$.
\end{enumerate}
\end{thm}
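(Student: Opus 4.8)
The plan is to transport the entire statement to the cotangent bundle of $G/H$ and read off everything from the $G$-geometry there. Identify $T^*(G/H)\simeq G\times_H\me$, with $\me=\h^\perp\simeq(\g/\h)^*$ the fibre over the base point, together with the $G$-equivariant moment map $\mu\colon T^*(G/H)\to\g^*$, $\mu([g,x])=\Ads(g)x$. The elementary but decisive observation is the orbit–stabiliser dictionary: every $G$-orbit meets the fibre over $\{H\}$, two points $[e,x]$, $[e,x']$ lie in one $G$-orbit iff $x,x'$ lie in one $H$-orbit, and $G^{[e,x]}=H^x$. Hence the generic $G$-stabiliser on $T^*(G/H)$ coincides with the generic $H$-stabiliser $S$ on $\me$, and $\bbk(T^*(G/H))^G\simeq\bbk(\me)^H$.

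Granting this dictionary, the first equality in (2) is pure bookkeeping: since $\dim\me=\dim G-\dim H$ and a generic $H$-orbit has dimension $\dim H-\dim S$, one gets $\dim\me-\max_x\dim H{\cdot}x=\dim G+\dim S-2\dim H$. For the second equality I would invoke the complexity formula for cotangent bundles: for a smooth $G$-variety $X$ one has $\trdeg\,\bbk(T^*X)^G=2c_G(X)+r_G(X)$ (Vinberg–Knop, see \cite{kn90,vp89}). Applying this to $X=G/H$ and combining with $\trdeg\,\bbk(T^*(G/H))^G=\trdeg\,\bbk(\me)^H=\dim\me-\max_x\dim H{\cdot}x$ (Rosenlicht) yields $2c+r$. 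If a self-contained argument is wanted, the same number arises by splitting the Poisson field $\bbk(T^*(G/H))^G$ into its Poisson centre, pulled back from $\g^*\md G$ and of transcendence degree $r$, and a symplectic part of even dimension $2c$; this is exactly Knop's description of the invariants of the cotangent bundle.

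For (3) I would again use $\mu$ at a generic $\xi=[e,x]$. The point is that a generic value $\eta=\mu(\xi)$ lies in the semisimple locus of $\g^*$, so that $L:=G^\eta$ is reductive with $\rk L=\rk G$; equivariance of $\mu$ then forces $S=G^{\xi}\subseteq L$, and $\rk G-\rk S$ is the corank of $S$ in $L$. The rank $r$ has Knop's moment-map description as the dimension of the image of $\mu$ in $\g^*\md G\simeq\te^*/W$ \cite{kn90}; equivalently, it is the dimension of the torus acting along the generalised Cartan subspace $\ce\subset\me$ and moving a generic $\xi$. The claim is then that a maximal torus of $G$ decomposes, up to isogeny, into a maximal torus of $S$ and this $r$-dimensional torus, so that $\rk G-\rk S=r$. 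Verifying that the generic moment value is semisimple and pinning down this torus (checking it meets $S$ in a finite group) is the technical heart of (3).

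The remaining and, to my mind, the main obstacle is the reductivity asserted in (1). When $G/H$ is affine ($H$ reductive), $S$ is the generic isotropy group of the linear action of the reductive group $H$ on the vector space $\me$, which is reductive by Luna's slice theorem \cite{vp89}. In the genuinely quasi-affine case $H$ need not be reductive, so I would argue on the $G$-side: show that for generic $\xi$ the orbit $G{\cdot}\xi$ is affine—being closed in the fibre $\mu^{-1}(\eta)$ over the semisimple value $\eta=\mu(\xi)$—whence $G/S$ is affine and $S$ is reductive by the Matsushima criterion \cite{vp89}. Here quasi-affineness of $G/H$ (observability of $H$) is what makes $T^*(G/H)$ quasi-affine and the generic orbits behave well; establishing the affineness and closedness of the generic moment-map fibres and orbits is the step I expect to require the most care, and it is the natural place to lean on Knop's analysis of the cotangent bundle \cite{kn90}.
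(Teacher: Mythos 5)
Your route is Knop's cotangent-bundle/moment-map approach, which the paper explicitly distinguishes from its own: the proof cited in \cite{p90} and \cite{p99} runs through ``doubled actions'' and produces the structural properties $(\eus P_1)$--$(\eus P_5)$, from which (1) and (3) are immediate (the sandwich $Z_G(t)'\subset S\subset Z_G(t)$ forces $S$ reductive, and $(\eus P_4)$ together with $\dim\te_1=r$ gives $\rk G-\rk S=r$). Your treatment of (2) is fine as a reduction to Knop: the dictionary $\bbk(T^*(G/H))^G\simeq\bbk(\me)^H$, Rosenlicht's theorem, and the formula $\trdeg\bbk(T^*X)^G=2c_G(X)+r_G(X)$ from \cite{kn90} do combine to give both equalities. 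But parts (1) and (3) contain genuine gaps, and one of your justifications is false as stated.

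In the affine case of (1) you assert that $S$ is reductive because it is the generic isotropy group of a linear action of the reductive group $H$, ``by Luna's slice theorem''. That statement is false in general: for $(SL_2:\bbk^2)$ the generic stabiliser is a one-dimensional unipotent group. What saves the coisotropy representation is that $(H:\me)$ is orthogonal, hence the action is stable by Luna \cite{lu72} (the paper uses exactly this fact in Section~\ref{sect:general}); only then are generic orbits closed and their stabilisers reductive. In the genuinely quasiaffine case your argument breaks at the decisive point: closedness of $G{\cdot}\xi$ in $\mu^{-1}(\eta)$ would only exhibit the orbit as a closed subvariety of a quasiaffine variety ($T^*(G/H)$ is merely quasiaffine when $G/H$ is), hence as a quasiaffine variety, whereas Matsushima's criterion requires $G/S$ to be \emph{affine}; quasiaffineness of $G/S$ is perfectly compatible with $S$ unipotent, as $G/U$ shows. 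So reductivity is not established by your scheme precisely in the case where it is hard. Finally, for (3) you concede that the semisimplicity of a generic moment value and the splitting of a maximal torus of $G$, up to isogeny, into a maximal torus of $S$ and an $r$-dimensional complement are ``the technical heart'': but these two claims \emph{are} the substance of the theorem (they are the moment-map counterparts of $(\eus P_1)$ and $(\eus P_4)$), so (3) remains unproven. As it stands, the proposal settles (2) modulo \cite{kn90}, and outlines, but does not close, (1) and (3).
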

The theory developed in~\cite{p90} (and presented with more details in~\cite{p99}) contains much more 
results. We mention those that will be needed later. Let us assume that $B$ and $T$ are fixed. Then the 
choice of $H$ (up to conjugacy in $G$) and $S$ (up to conjugacy in $H$) is at our disposal. It was proved 
that $H$ and $S$ can be chosen such that
\begin{itemize}
\item[($\eus P_1$)] \ $Z_G(t)' \subset S\subset Z_G(t)$ for some $t\in\te=\Lie T$. Hence
$T\subset N_G(S)$ and $S/S^o$ is abelian; 
\item[($\eus P_2$)] \ $\be\cap \es$ is a Borel subalgebra of $\es$ and $B\cap S$ is a generic stabiliser for $(B:G/H)$;
\item[($\eus P_3$)] \ $\ut\cap \es$ is the nilradical of $\be\cap\es$ and $U\cap S$ is a generic stabiliser for $(U:G/H)$;
\item[($\eus P_4$)] \ $\te\cap\es$ is a Cartan subalgebra of $\es$; 
\item[($\eus P_5$)] \ $B{\cdot}S=P$ is a parabolic subgroup and $P\cap H=S$.
\end{itemize}

\vskip.1ex\noindent
Whenever the algebra $\bbk[\me]^H$ is finitely generated, we consider the following objects:

\textbullet \ \ the {\it categorical quotient} \ $\me\md H:=\spe(\bbk[\me]^H)$; 
\\ \indent
\textbullet \ \ the {\it quotient morphism\/} $\pi=\pi_{H,\me}:\me\to \me\md H$ induced by the inclusion
$\bbk[\me]^H\hookrightarrow \bbk[\me]$;
\\ \indent
\textbullet \ \ the {\it nullcone}\/ $\fN_H(\me):=\pi^{-1}(\pi(0))\subset\me$. 

\begin{ex}    \label{ex:kr71}
Let $\sigma\in {\sf Aut}(\g)$ be an involution and $\g=\g_0\oplus\g_1$ the sum of $\pm 1$-eigenspaces 
of $\sigma$. If $G_0$ is the connected subgroup of $G$ with $\Lie G_0=\g_0$, then $G/G_0$ is affine 
and $c_G(G/G_0)=0$. We say that $G/G_0$ is a {\it symmetric variety} and $G_0$ is a {\it symmetric
subgroup\/} of $G$. The isotropy representation $(G_0:\g_1)$ has thoroughly been studied by 
Kostant--Rallis~\cite{kr71}. For instance, they proved that $\bbk[\g_1]^{G_0}$ is a polynomial algebra, 
$\dim\g_1\md G_0=r_G(G/G_0)$, and $\pi_{G_0,\g_1}:\g_1\to\g_1\md G_0$ is {\it equidimensional}, i.e., 
all fibres have the same dimension.
\end{ex}

For any quasiaffine $G/H$, we introduced in~\cite{p90} a certain subspace $\ce\subset\me$, which is 
useful in the study of the linear action $(H:\me)$. Let us recall the general 
construction of $\ce$. The definitions of the complexity and rank of $G/H$ imply that 
\[
    \dim G/H-\max_{x\in G/H}\dim B{\cdot}x=c \quad \& \quad \dim G/H-\max_{x\in G/H}\dim U{\cdot}x=c+r .
\]
Without loss of generality, we may assume that $x=\{H\}$ is generic in both senses and 
properties ($\eus P_1$)-($\eus P_5$) are satisfied. 
Then $\codim_\g(\be+\h)=c$, $\codim_\g(\ut+\h)=c+r$, and we set 
\[
  \text{  $\ce=(\be+\h)^\perp=\ut^\perp\cap\me$ \ \ \& \ \ $\tilde\ce=(\ut+\h)^\perp=\be^\perp\cap\me$.}
\]
Then $\dim\ce=c+r$ and  $\dim\tilde\ce=c$.
It follows that $\dim\ce\le \dim\me-\max_{x\in\me}\dim H{\cdot}x=2c+r$, and the equality occurs if 
and only if $c=0$. Upon identification of $\g$ and $\g^*$, we have
$\ce=\be\cap\me$ and $\tilde\ce=\ut\cap\me$. 

Consider the projection $p_\te:\be=\te\oplus\ut\to \te$ and set 
$\te_1= p_\te(\ce)\subset\te$. Then $\dim \te_1=r$. 
If $c=0$, then $\tilde\ce=\{0\}$ and $p_\te$ maps $\ce$ isomorphically to $\te_1$. In this case,  
$\ce$ contains no nilpotent elements of $\g$. Moreover,  the following holds.

\begin{thm}[{\cite[Section\,3.2]{p90}}]    
\label{thm:Cartan-sph}
For $c=0$, the subspace $\ce\subset\me$ has the following properties:
\begin{enumerate}
\item \ the $H$-saturation of $\ce$ is dense in $\me$, i.e., $\ov{H{\cdot}\ce}=\me$;
\item \ almost all elements of $\ce$ have the same stabiliser in $H$, which is just $S$;
\item \ there is a finite group $W\subset GL(\ce)$ such that the restriction homomorphism $\bbk[\me]\to\bbk[\ce]$ induces an isomorphism $\bbk[\me]^H\isom \bbk[\ce]^W$.
\end{enumerate}
\end{thm}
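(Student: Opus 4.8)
The three assertions are the spherical counterpart of the Kostant--Rallis theorem recalled in Example~\ref{ex:kr71}, and the plan is to prove (1) and (2) by an infinitesimal transversality argument and (3) by a Chevalley-restriction argument. Throughout I identify $\g\simeq\g^*$ via $\Psi$ and regard $\me=\h^\perp$ and $\ce=\be\cap\me$ as subspaces of $\g$, using the facts recorded above for $c=0$: $\dim\ce=r$, the projection $p_\te\colon\be\to\te$ is injective on $\ce$, and $\ce$ contains no nonzero nilpotent elements. I also use Theorem~\ref{thm:coiso-props}: the maximal dimension of an $H$-orbit in $\me$ equals $\dim\me-r$, it is attained on a dense open subset, and the generic stabiliser there is $H$-conjugate to the reductive group $S$.

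For (1) it suffices to show $[\h,x]+\ce=\me$ for generic $x\in\ce$, since then the orbit map $H\times\ce\to\me$ has surjective differential at $x$ and dense image. Now $[\h,x]\subseteq\me$ for every $x\in\ce$: for $\eta,\zeta\in\h$ one has $\Psi([\eta,x],\zeta)=\Psi(x,[\zeta,\eta])=0$ because $[\zeta,\eta]\in\h$ and $x\in\me=\h^\perp$, so $[\eta,x]\in\h^\perp=\me$. Hence $[\h,x]+\ce\subseteq\me$, with equality exactly when $\dim([\h,x]+\ce)=\dim\me$. By lower semicontinuity of $x\mapsto\dim([\h,x]+\ce)$ it is enough to exhibit one $x\in\ce$ where equality holds, and for this I check \textbf{(i)} some $x\in\ce$ lies in the principal orbit, so $\dim[\h,x]=\dim\me-r$ by Theorem~\ref{thm:coiso-props}, and \textbf{(ii)} the transversality $[\h,x]\cap\ce=0$ there; with $\dim\ce=r$ these give $\dim([\h,x]+\ce)=(\dim\me-r)+r=\dim\me$. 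Statement (2) then follows: by (1) the dense set $H{\cdot}\ce$ meets the principal isotropy stratum, so $H^x$ is conjugate to $S$ for generic $x\in\ce$, and with $H$ and $S$ chosen to satisfy $(\eus P_1)$--$(\eus P_5)$ it equals $S$.

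For (3) put $Z_H(\ce)=\{h\in H\mid h|_\ce=\id\}$ and $N_H(\ce)=\{h\in H\mid h\ce=\ce\}$, and let $W=N_H(\ce)/Z_H(\ce)\subset GL(\ce)$. The restriction $\rho\colon\bbk[\me]^H\to\bbk[\ce]$ is injective, since its kernel consists of $H$-invariants vanishing on the dense set $H{\cdot}\ce$ by (1), and $\Ima\rho\subseteq\bbk[\ce]^W$. The group $W$ is finite: by the transversality (ii) a principal orbit meets $\ce$ in finitely many points, so $N_H(\ce)^o{\cdot}x$ is finite for generic $x\in\ce$, whence $N_H(\ce)^o\subseteq H^x=S\subseteq Z_H(\ce)$. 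It remains to show $\rho$ is onto $\bbk[\ce]^W$, for which I show $\rho$ is finite and birational. For birationality: a generic $H$-orbit meets $\ce$ in a single $W$-orbit, so $\rho$ induces an isomorphism of fields $\bbk(\me)^H\isom\bbk(\ce)^W$ (both of transcendence degree $r$). For finiteness: the induced morphism $\ce/W\to\me\md H$ is graded and its fibre over the vertex $\pi(0)$ is $\ce\cap\fN_H(\me)=\{0\}$, because a nonzero $x\in\ce$ has $p_\te(x)\ne 0$, whence some $G$-invariant polynomial on $\g$ (restricting to an $H$-invariant on $\me$) is nonzero at $x$; a graded morphism with a single point over the vertex is finite. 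A finite birational morphism onto the normal variety $\me\md H$ is an isomorphism, so $\Ima\rho=\bbk[\ce]^W$, which is (3).

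The main obstacles are the two transversality-type inputs. For (1)--(2) the delicate point is \textbf{(ii)} (and with it \textbf{(i)}): to rule out $0\ne[\eta,x]\in\ce$ I expect to use that for generic $x\in\ce$ the projection $p_\te(x)\in\te_1$ is regular enough that the semisimple part of $x$ governs $\ad_x$, and then to read off $[\h,x]\cap(\be\cap\me)=0$ from the eigenspace decomposition of $\ad$ of that semisimple part; this is where the absence of nilpotents in $\ce$ and properties $(\eus P_1)$--$(\eus P_4)$ enter. For (3) the crux is the birationality lemma that $H$-conjugate generic points of $\ce$ are already $W$-conjugate --- the analogue of the conjugacy of Cartan subspaces in the symmetric case --- which I would prove by showing that if $x'=hx$ with $x,x'\in\ce$ generic, then $h$ carries the transversal section $\ce$ to another section through $x'$ with the same stabiliser $S$, forcing $h\in N_H(\ce)$ modulo $Z_H(\ce)$.
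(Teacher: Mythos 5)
Your skeleton for (1)--(2) is the right one, and it is in fact the route the paper indicates: the theorem itself is only quoted from \cite[Section 3.2]{p90}, but the proof of Theorem~\ref{thm:codim=c} (which generalises part (1)) shows exactly how the argument is meant to close. The problem is that your proof stops where the theorem starts: the two inputs you label ``the main obstacles'' --- (i) some $x\in\ce$ has $\dim[\h,x]=\dim\me-r$, and (ii) $[\h,x]\cap\ce=0$ --- are the entire mathematical content, and neither is established. Note that (i) cannot be quoted from Theorem~\ref{thm:coiso-props}: genericity in $\me$ says nothing about the small subspace $\ce$ meeting the maximal-orbit-dimension stratum; that claim is essentially equivalent to (1) itself. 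What actually closes the argument in the paper's method is control of the \emph{full} centraliser: for almost all $y\in\ce$, $\g^y$ is a Levi subalgebra of $\p=\Lie (B{\cdot}S)$, which follows from $N$-conjugacy of $y$ to $p_\te(y)\in\te_1$ together with \cite[Lemma 3]{p90} (namely $\g^{y'}=\el=\es\oplus\te_1$ for almost all $y'\in\te_1$). This yields at once $[\g,y]\cap\ce=\tilde\ce=\{0\}$ when $c=0$ (hence your (ii), and more) and $\dim\h^y=\dim\es$ (hence your (i)), after which the dimension count is routine. Your sketch invokes only semisimplicity of the generic $x\in\ce$, which is far too weak --- it bounds neither $\dim\g^x$ nor $\h^x$ --- and it never uses property $(\eus P_5)$ ($P\cap H=S$), which is the essential ingredient. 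Likewise your last step of (2), upgrading ``$H^x$ conjugate to $S$'' to ``$H^x=S$'', needs $S$ to fix $\ce$ pointwise and is asserted rather than proven.

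In (3), the finiteness half is correct and nicely done (a nonzero $x\in\ce\subset\be$ has $p_\te(x)\ne 0$, hence is not nilpotent, hence some positive-degree $G$-invariant is nonzero at $x$), modulo a circularity: you treat $\me\md H$ as a normal variety and use $\fnme$, but finite generation of $\bbk[\me]^H$ is not available a priori when $H$ is non-reductive (the theorem covers, e.g., $H=U$); in the paper finite generation is a \emph{consequence} of (3). This is repairable by staying algebraic: $\bbk[\ce]^W$ is integral over $\Ima\rho$, the subring $\Ima\rho\simeq\bbk[\me]^H=\bbk[\me]\cap\bbk(\me)^H$ is integrally closed in its fraction field, and birationality then forces $\Ima\rho=\bbk[\ce]^W$. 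The genuine gap is the birationality lemma itself. From $x'=hx$ with $x,x'\in\ce$ generic you can only conclude $h\in N_H(S)$ (since $hH^xh^{-1}=H^{x'}$ and both equal $S$), \emph{not} $h\in N_H(\ce)$: the group $N_H(S)$ stabilises the fixed-point space $\me^S\supseteq\ce$ but has no reason to stabilise the subspace $\ce$ inside it, so ``same stabiliser'' does not force $h$ into $N_H(\ce)$. This is precisely the analogue of the conjugacy theorem for Cartan subspaces and needs its own proof; in \cite{p90,p99} it comes out of the doubled-action machinery (for reductive $H$ one could alternatively reduce via Luna--Richardson to the action of $N_H(S)/S$ on $\me^S$, but that route is closed in the non-reductive case). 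As it stands, your proposal is a correct reduction of the theorem to its two key lemmas, not a proof of it.
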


\noindent
It follows from (1) and (2) that, for almost all $x\in\ce$, the stabiliser $H^x$ is generic, while (3)
implies that $\bbk[\me]^H$ is finitely generated and $\pi:\me\to\me\md H$ is {\it equidimensional}.
The common dimension of fibres equals $\dim\me-\dim\me\md H=\dim\me-r$.
Furthermore, if $H$ is connected, then $W$ is a reflection group and $\bbk[\me]^H$ is a polynomial ring, 
i.e., $\me\md H\simeq \mathbb A^r$ is an affine space, see~\cite[Cor.\,5]{p90}.

If $c=0$, then $\ce$ resembles a Cartan subspace for the isotropy representation a symmetric 
variety $G/G_0$. For this reason, a subspace $\ce$ satisfying properties of 
Theorem~\ref{thm:Cartan-sph} was christened in \cite{p90} a {\it Cartan subspace} (of $\me$). 

Thus, Theorem~\ref{thm:Cartan-sph} shows that many good properties of the symmetric variety $G/G_0$ 
and $(G_0:\g_1)$ are retained for quasiaffine spherical homogeneous spaces.

For an arbitrary quasiaffine $G/H$, we shall say that $\ce=\ut^\perp\cap\me$, as above, 
is a {\it generalised Cartan subspace\/} of $\me$. If $G/H$ is not spherical, then $\ce$ does not satisfy properties (1) and (3) of Theorem~\ref{thm:Cartan-sph} (cf. also Theorem~\ref{thm:codim=c} below).

{\it\bfseries Remarks.} 1)  In~\cite{p90}, the generalised Cartan subspace is denoted by $\z$.
Here we follow the notation of \cite[Chapter\,2]{p99}.
\\ \indent
2) Above results on the complexity, rank, and coisotropy representations are also obtained by 
Knop~\cite{kn90} via different methods. Our approach in~\cite{p90,p99} is based on the study of `doubled 
actions' (which is not discussed here), while Knop considers the cotangent bundles and moment map. 

The nullcone $\fnme$ is a fibre of $\pi$ of maximal dimension~\cite[\S\,5.2]{vp89}. The action $(H:\me)$ 
is said to be {\it equidimensional}, if the quotient morphism $\pi:\me\to \me\md H$ is equidimensional. The 
{\it defect\/} of equidimensionality of $\pi$ (=\,of $\fN_H(\me)$) is introduced in~\cite[Section\,3]{p95} as 
the difference between $\dim\fnme$ and dimension of generic fibres of $\pi$, i.e.,
\\[.6ex]
\centerline{ $\dif\fN_H(\me)=\dim\fN_H(\me)-(\dim\me-\dim\me\md H)$.}

\noindent
Therefore, $\pi$ is equidimensional if and only if $\dif\fnme=0$.

If $H$ is reductive, then $\fN_H(\me)=\{x\in\me\mid \ov{H{\cdot}x}\ni 0\}$ and the representation 
$(H:\me)$ is orthogonal. The latter implies that the action $(H:\me)$ is stable~\cite{lu72} and therefore
$\dim\me\md H=2c+r$. Then $\dim\fnme\ge \dim\me-(2c+r)$. On the other hand, there is an
upper bound on dimension of the nullcone for the self-dual representations of reductive 
groups~\cite[Prop.\,2.10]{g78b}. In our case, this shows that
\beq         \label{eq:gerry}
      \dim\fnme\le \dim U_H+\frac{1}{2}(\dim\me-\dim\me^{T_H})=\frac{1}{2}(\dim\g-\dim\g^{T_H}) .
\eeq

\begin{thm}[{\cite[Proposition\,3.6]{p95}}]    \label{thm:def-N-c}
If $H$ is reductive, then\/ $\dif\fN_H(\me)\le c$.
\end{thm}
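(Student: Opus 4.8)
The plan is to recast the inequality as a bound on the top fibre of $\pi:\me\to\me\md H$ and then to control that bound through the symplectic geometry underlying $\bbk[\me]^H$, because the self-dual estimate \eqref{eq:gerry} by itself is not sharp enough here. Since $H$ is reductive the action $(H:\me)$ is orthogonal, hence stable, so $\dim\me\md H=2c+r$ and the generic fibre of $\pi$ has dimension $\dim\me-(2c+r)$. Thus
\[
   \dif\fnme\le c \quad\Longleftrightarrow\quad \dim\fnme\le\dim\me-c-r .
\]
It is natural to try to insert \eqref{eq:gerry} into the right-hand side, but $\tfrac12(\dim\g-\dim\g^{T_H})$ can exceed $\dim\me-c-r$ by up to $c$; the discrepancy is precisely the defect we are after. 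So \eqref{eq:gerry}, which only uses orthogonality through a Gerstenhaber-type count, must be upgraded to a bound that actually sees the complexity, and for that I would bring in the moment map of $T^*(G/H)$.

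The engine I would use is Knop's description of the invariants of the cotangent bundle~\cite{kn90}. Identifying $\bbk[\me]^H$ with $\bbk[T^*(G/H)]^G$, this algebra inherits a Poisson bracket; let $\gZ$ be its Poisson centre and $\boldsymbol f:\me\to\spe\gZ$ the induced morphism. Knop's results give $\trdeg\gZ=r$, that $\boldsymbol f$ is equidimensional, and that $\boldsymbol f^{-1}(0)=\me\cap\fngs$; in particular $\dim(\me\cap\fngs)=\dim\me-r$, and the generic symplectic leaf of $\me\md H$ has dimension $(2c+r)-r=2c$. On the other hand, by Hilbert--Mumford any $x\in\fnme$ is destabilised by a one-parameter subgroup of $H\subset G$, hence is $G$-unstable, so $\fnme\subseteq\me\cap\fngs$. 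This already recovers $\dim\fnme\le\dim\me-r$, i.e.\ $\dif\fnme\le 2c$, and the remaining task is to save the factor $c$.

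The key step is then local at the origin. Writing $Q_0=\pi(\me\cap\fngs)$ for the fibre of $\me\md H\to\spe\gZ$ over $0$, we have $\dim Q_0=2c$, and $Q_0$ carries the restricted Poisson structure of a (conical degeneration of a) symplectic variety. Since $\pi$ factors through $\boldsymbol f$, the nullcone $\fnme=\pi^{-1}(\pi(0))$ is exactly the fibre of the restricted map $\pi\colon\me\cap\fngs\to Q_0$ over the vertex $\pi(0)$ of $Q_0$. The generic fibre of this restricted map has dimension $\dim(\me\cap\fngs)-2c=\dim\me-2c-r$, and I would argue that the vertex fibre can exceed this by at most $c$: the $2c$ non-central invariants cutting $\fnme$ out of $\me\cap\fngs$ are organised by the symplectic structure so that their common zero locus drops dimension by the Lagrangian half $c$ rather than the full $2c$. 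This yields $\dim\fnme\le\dim(\me\cap\fngs)-c=\dim\me-c-r$, as required.

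The main obstacle is making the ``Lagrangian half'' rigorous, that is, bounding the jump of the $\pi$-fibre dimension over the vertex of the symplectic fibre $Q_0$ by $c$ and not $2c$. The cleanest way to close it would be to produce a Poisson-commutative $\gA\supseteq\gZ$ with $\trdeg\gA=c+r$ for which the morphism $\me\to\spe\gA$ is equidimensional, since then $\fnme\subseteq(\me\to\spe\gA)^{-1}(0)$ would give the bound immediately; but such an $\gA$ need not be available in general, so the estimate must instead be extracted intrinsically from the symplectic (self-dual) structure on $\me\md H$. This is exactly the point where orthogonality enters in a stronger way than in \eqref{eq:gerry}, and I expect it to be the hardest part of the argument.
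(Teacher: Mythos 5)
Your opening reductions are sound: for reductive $H$ the action $(H:\me)$ is stable, so $\dim\me\md H=2c+r$; a one-parameter subgroup of $H$ destabilising $x\in\fnme$ also destabilises it in $G$, so $\fnme\subset\me\cap\fngs$; and Knop's results give $\dim(\me\cap\fngs)=\dim\me-r$ (this is Theorem~\ref{thm:peresechenie}). All of this, however, only yields $\dif\fnme\le 2c$, and the entire content of the theorem is the improvement from $2c$ to $c$ --- which is exactly where your text stops being a proof. The ``Lagrangian half'' step, i.e.\ the claim that the fibre of $\pi$ over the vertex of $Q_0$ exceeds the generic one by at most $c$ because the $2c$ non-central invariants are ``organised by the symplectic structure'', is a heuristic with no argument behind it: nothing you wrote controls how the fibre dimension of $\pi$ jumps over the zero fibre of $\boldsymbol{f}$. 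Moreover, the fallback you suggest --- a Poisson-commutative $\gA\supseteq\gZ$ with $\trdeg\gA=c+r$ such that $\me\to\spe\gA$ is equidimensional --- is not available: the existence of such an $\gA$, even \emph{without} the equidimensionality requirement, is precisely Conjecture~\ref{conj:upper} of this paper, an open problem. So the proposal, as written, reduces the theorem to something strictly harder.

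The gap originates in a miscalculation at the start: you dismiss \eqref{eq:gerry} as losing up to $c$, but when the generic stabiliser $S$ is finite, i.e.\ $r=\rk\g$, it loses nothing. In that case $\dim\me=\dim B+c$, the generic fibre of $\pi$ has dimension $\dim H=\dim U-c$, and since $\g^{T_H}$ contains a Cartan subalgebra of $\g$, \eqref{eq:gerry} gives $\dim\fnme\le\frac{1}{2}(\dim\g-\dim\g^{T_H})\le\dim U=\dim\me-c-r$, which is exactly the required bound. This is step $1^o$ of the paper's proof, cf.~\eqref{eq:esche-odno}. The remaining case $r<\rk\g$ is then handled not symplectically but by the Luna--Richardson theorem: restricting to $\me^S$ with the action of $N_H(S)$ preserves the complexity, can only increase the defect, and produces a coisotropy representation whose generic stabiliser is finite, so case $1^o$ applies. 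If you want to rescue your route, you would need an honest theorem bounding the jump of $\pi$-fibre dimensions over $\boldsymbol{f}^{-1}(0)$ by $c$; I know of no such statement that does not already contain the theorem being proved.
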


\begin{proof}[Outline of the proof]
$1^o$.  If $r=\rk\g$, then $S$ is finite and $\dim G/H=\dim\me=\dim B+c$. 
Hence $\dim\fN_H(\me)\ge\dim\me-\dim\me\md H=\dim H=\dim U-c$. On the other hand,
\beq     \label{eq:esche-odno}   \textstyle
         \dim\fN_H(\me) \le \frac{1}{2}(\dim\g-\dim\g^{T_H}){\le} \dim U .
\eeq
$2^o$. If $r <\rk\g$, then $\Lie S\ne 0$ and one can use the Luna--Richardson theorem~\cite[Theorem\,4.2]{lr79}. This provides a (rather technical) reduction to part $1^o$. 
\end{proof}

\begin{rmk}   \label{rem:dim-H}
The relations $\dim H=\dim U-c$ and $\dim \me=\dim B+c$ hold only if  $S$ is finite. 
In general, one has $\dim H=\dim U+\dim B_S-c$ and $\dim\me=\dim B-\dim B_S+c$.
\end{rmk}

\subsection{Homogeneous spaces of complexity at most 1}
The Luna--Vust theory of equivariant embeddings of homogeneous spaces (1983) implies that a reasonably complete theory can be developed for homogeneous spaces of complexity $\le 1$. For a modern account of that theory and related topics, we refer to \cite{tim}.

As is already mentioned, if $\h$ is a fixed point subalgebra for an involution of $\g$, then $c_G(G/H)=0$. All connected spherical reductive subgroups $H$ of {\bf simple} algebraic groups $G$
have been found by M.\,Kr\"amer~\cite{kr79}. Then M.\,Brion and I.\,Mikityuk (independently) found all connected spherical reductive subgroups of the {\bf semisimple} algebraic groups, see e.g. tables in~\cite[Ch.\,I, \S\,3.6]{v01}.

The study of quasiaffine homogeneous spaces of complexity~1 was initiated in~\cite{p92}, where a 
classification of the pairs $(G,H)$ such that $G$ simple, $H$ is connected reductive, and $c_G(G/H)=1$ 
is also obtained. (See also~\cite[Chapter\,3]{p99}.)

\section{New results for coisotropy representations}
\label{sect:new1}

\noindent
For the symmetric varieties (see Example~\ref{ex:kr71}),  one has $\dim G{\cdot}x=2\dim G_0{\cdot}x$ 
for {\bf any} $x\in\g_1$~\cite[Prop.\,5]{kr71}. For quasiaffine spherical $G/H$, this equality holds 
generically, i.e., there is a dense open subset $\Omega\subset\me$ such that 
$\dim G{\cdot}x=2\dim H{\cdot}x$ for all $x\in\Omega$~\cite[Theorem\,5]{p90}. 
Then $H{\cdot}x$ is a Lagrangian subvariety of the symplectic variety $G{\cdot}x\subset\g^*$ for all 
$x\in\Omega$. The following observation is a slight extension of~\cite[Proposition\,1]{p90}.

\begin{lm}   \label{lm:twice-dim}
 {\sf (i)} For {\bfseries any} algebraic subgroup $H\subset G$ and $x\in\me=\h^\perp$, one has
\\[.5ex]
\centerline{   $\dim G{\cdot}x=\dim H{\cdot}x+\dim([\g,x]\cap\me)\ge 2\dim H{\cdot}x$.}
\vskip.3ex

{\sf (ii)}   $\dim G{\cdot}x=2\dim H{\cdot}x \Longleftrightarrow  [\g,x]\cap\me=[\h,x]$.
\end{lm}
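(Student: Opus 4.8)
The plan is to work entirely inside $\g$, using the identification $\g\cong\g^*$ afforded by the invariant form $\Psi$, so that the coadjoint orbit $G{\cdot}x$ becomes an adjoint orbit with tangent space $[\g,x]=\ad(\g)x$ and, likewise, $\dim H{\cdot}x=\dim[\h,x]$. The backbone of the argument is the standard orthogonality relation $[\g,x]^\perp=\g^x$, where $\g^x$ is the centraliser of $x$: indeed, by invariance of $\Psi$ one has $\Psi([\xi,x],\eta)=\Psi(\xi,[x,\eta])$ for all $\xi$, so $\eta\perp[\g,x]$ exactly when $[x,\eta]=0$, by nondegeneracy of $\Psi$. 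Consequently $\dim G{\cdot}x=\dim\g-\dim\g^x$ and $\dim H{\cdot}x=\dim\h-\dim\h^x$, and dually $[\g,x]=(\g^x)^\perp$.

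First I would compute $[\g,x]\cap\me$ by passing to orthogonal complements. Since $[\g,x]=(\g^x)^\perp$ and $\me=\h^\perp$, we get $[\g,x]\cap\me=(\g^x+\h)^\perp$, whence
\[
   \dim([\g,x]\cap\me)=\dim\g-\dim(\g^x+\h)=\dim\g-\dim\g^x-\dim\h+\dim(\g^x\cap\h).
\]
Now $\g^x\cap\h=\h^x$, so the right-hand side equals $(\dim\g-\dim\g^x)-(\dim\h-\dim\h^x)=\dim G{\cdot}x-\dim H{\cdot}x$. Rearranging gives precisely the equality $\dim G{\cdot}x=\dim H{\cdot}x+\dim([\g,x]\cap\me)$ of part {\sf (i)}.

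It then remains to bound $\dim([\g,x]\cap\me)$ from below, and here the one place a hypothesis is genuinely used is the inclusion $[\h,x]\subseteq[\g,x]\cap\me$. The containment $[\h,x]\subseteq[\g,x]$ is trivial; for $[\h,x]\subseteq\me$ I would invoke that $\h$ is a subalgebra: for $\eta,\zeta\in\h$ we have $\Psi([\eta,x],\zeta)=-\Psi(x,[\eta,\zeta])=0$ since $[\eta,\zeta]\in\h$ and $x\in\h^\perp$, so $[\eta,x]\perp\h$. Thus $\dim([\g,x]\cap\me)\ge\dim[\h,x]=\dim H{\cdot}x$, which together with the identity of {\sf (i)} yields $\dim G{\cdot}x\ge 2\dim H{\cdot}x$.

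Finally, part {\sf (ii)} drops out: because $[\h,x]\subseteq[\g,x]\cap\me$ with $\dim[\h,x]=\dim H{\cdot}x$, the equality $\dim G{\cdot}x=2\dim H{\cdot}x$ holds iff $\dim([\g,x]\cap\me)=\dim[\h,x]$, i.e. iff the inclusion is an equality $[\g,x]\cap\me=[\h,x]$. There is no serious obstacle in this argument; the only delicate points are the correct bookkeeping of orthogonal complements and the verification that $[\h,x]$ lands in $\me$, which is exactly where the subalgebra structure of $\h$ enters.
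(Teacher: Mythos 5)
Your proof is correct and follows essentially the same route as the paper: both compute $([\g,x]\cap\me)^\perp=\g^x+\h$, deduce $\dim([\g,x]\cap\me)=\dim G{\cdot}x-\dim H{\cdot}x$ by the dimension formula for a sum of subspaces, and conclude via the inclusion $[\h,x]\subseteq[\g,x]\cap\me$. You merely spell out details the paper leaves implicit (the identity $[\g,x]^\perp=\g^x$, the equality $\g^x\cap\h=\h^x$, and the check that $[\h,x]\subseteq\me$, which also follows from $\me$ being an $H$-submodule).
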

\begin{proof}  We have
$([\g,x]\cap\me)^\perp=([\g,x])^\perp+\me^\perp=  \g^x+\h$. Hence 
\\[.6ex]
 \centerline{$\dim([\g,x]\cap\me)=\dim\g-\dim\g^x-\dim\h+\dim(\g^x\cap\h)=\dim G{\cdot}x-\dim H{\cdot}x$.}
 
 It is also clear that \ $[\g,x]\cap\me\supset [\h,x]$.
\end{proof}

\noindent 
In the setting of symmetric varieties (Example~\ref{ex:kr71}), the relation $[\g,x]\cap\g_1=[\g_0,x]$ for 
all $x\in\g_1$ readily follows from the presence of involution $\sigma$.

Recall that $\ce=\me\cap\ut^\perp=\me\cap\be$ is a generalised Cartan subspace, 
$\tilde\ce=\ce\cap\ut$, and $\te_1=p_\te(\ce)\subset\te$. The following is a generalisation of Theorem~\ref{thm:Cartan-sph}(1).

\begin{thm}        \label{thm:codim=c}
Let $G/H$ be a quasiaffine homogeneous space and $\ce\subset\me$ a generalised Cartan subspace.
Then \ $\codim_\me \ov{H{\cdot}\ce}=c_G(G/H)=c$. 
\end{thm}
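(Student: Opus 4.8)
The plan is to reduce the statement to two genericity properties of points of $\ce$ by computing $\dim\ov{H{\cdot}\ce}$ through the differential of the action map. First I would pass to tangent spaces. Let $a\colon H\times\ce\to\me$ be the action morphism $(h,v)\mapsto h{\cdot}v$; its image is dense in $\ov{H{\cdot}\ce}$. Since $\mathrm{char}\,\bbk=0$, generic smoothness gives $\dim\ov{H{\cdot}\ce}=\rank\,(d a)$ at a generic point, and by $H$-equivariance of $a$ this point may be taken to be $(e,v_0)$ with $v_0\in\ce$ generic. Because $\ce$ is a linear subspace and $d a_{(e,v_0)}(\xi,w)=\ads(\xi)v_0+w$, the image of $d a_{(e,v_0)}$ equals $[\h,v_0]+\ce$, where $[\h,v_0]:=\ads(\h)v_0\subset\me$ (note $\me$ is $H$-stable). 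Hence, for generic $v_0\in\ce$,
\[
   \dim\ov{H{\cdot}\ce}=\dim\bigl([\h,v_0]+\ce\bigr)=\dim H{\cdot}v_0+\dim\ce-\dim\bigl([\h,v_0]\cap\ce\bigr).
\]

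Next I would insert the known dimensions. Put $d_{\max}=\max_{x\in\me}\dim H{\cdot}x$. Theorem~\ref{thm:coiso-props}(2) gives $\dim\me=d_{\max}+2c+r$, and $\dim\ce=c+r$, so $\dim\me-\dim\ce=d_{\max}+c$. Substituting into the display yields the exact identity
\[
   \codim_\me\ov{H{\cdot}\ce}=c+\bigl(d_{\max}-\dim H{\cdot}v_0\bigr)+\dim\bigl([\h,v_0]\cap\ce\bigr).
\]
Both bracketed terms are non-negative, so $\codim_\me\ov{H{\cdot}\ce}\ge c$ with no further work; this is the easy half of the theorem.

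The hard half is the reverse inequality, which by the identity amounts to showing that for generic $v_0\in\ce$ both correction terms vanish, i.e.\ \textbf{(a)} $\dim H{\cdot}v_0=d_{\max}$, equivalently $\dim H^{v_0}=\dim S$, so that $v_0$ lies in an orbit of maximal dimension; and \textbf{(b)} $[\h,v_0]\cap\ce=0$. Condition (b) admits a clean reformulation: as $\me$ is $H$-stable, a vector $\ads(\xi)v_0$ lying in $\ut^\perp$ automatically lies in $\me\cap\ut^\perp=\ce$, so (b) says that the only orbit-tangent vectors landing in $\ce$ are zero, i.e.\ the orbit direction is transverse to $\ce$. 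For $c=0$, (a) is Theorem~\ref{thm:Cartan-sph}(2), and (a) together with (b) recovers Theorem~\ref{thm:Cartan-sph}(1). To prove them for $c>0$ I would use the construction $\ce=\ut^\perp\cap\me$ together with the adapted position of the generic stabiliser $S$ recorded in $(\eus P_1)$--$(\eus P_5)$ (in particular that $\te\cap\es$ is a Cartan subalgebra of $\es$ and the normalisation $T\subset N_G(S)$), following~\cite{p90,p99}; this is the analogue, for the generalised Cartan subspace, of the splitting $\g_1=[\g_0,x]\oplus\ce$ valid for a regular element of a Cartan subspace of a symmetric variety.

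The step I expect to be the main obstacle is exactly this last one. When $c>0$ the subspace $\ce$ has dimension $c+r<2c+r=\dim\me-d_{\max}$, so it is strictly smaller than a genuine cross-section for $(H:\me)$ and the spherical case cannot be quoted directly. The delicate point is to show that a generic point of this ``too small'' subspace nonetheless meets the principal isotropy stratum, giving (a), and that its orbit direction is transverse to $\ce$, giving (b); granting these, the identity above immediately yields $\codim_\me\ov{H{\cdot}\ce}=c$.
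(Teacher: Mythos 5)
Your reduction is correct as far as it goes, and it is in substance the same dimension count with which the paper concludes: for generic $v_0\in\ce$ one has $\dim\ov{H{\cdot}\ce}=\dim H{\cdot}v_0+\dim\ce-\dim\bigl([\h,v_0]\cap\ce\bigr)$, and feeding in $\dim\me=\max_{x\in\me}\dim H{\cdot}x+2c+r$ and $\dim\ce=c+r$ reduces the theorem to your conditions (a) and (b). But this is where the proposal stops, and (a) and (b) \emph{are} the theorem: you state them, observe that they are the main obstacle, and offer only the remark that one should use $(\eus P_1)$--$(\eus P_5)$ ``following \cite{p90,p99}''. That is a genuine gap, not a deferral to known results: the cited sources establish the relevant genericity properties of $\ce$ only in the spherical case (Theorem~\ref{thm:Cartan-sph}), and for $c>0$ the transversality statement (b) is precisely the new content of Theorem~\ref{thm:codim=c}, so it cannot be quoted. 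Your own closing paragraph identifies why no soft argument works: when $c>0$ the saturation $\ov{H{\cdot}\ce}$ is a proper subvariety of $\me$, so genericity of $v_0$ inside $\ce$ gives no access to the principal stratum of $(H:\me)$, and neither (a) nor (b) follows from general position.

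For comparison, here is how the paper fills exactly this hole. For (a): for almost all $y\in\ce$ the stabiliser $\g^y$ is a Levi subalgebra of $\p=\Lie(B{\cdot}S)$ (Step~1 in \cite[Theorem\,2.2.6]{p99}); since such a Levi lies in $\p$ and $\h\cap\p=\es$ by $(\eus P_5)$, this forces $\h^y=\h\cap\g^y\subset\es$, hence $\dim H{\cdot}y\ge\dim H-\dim S$, with equality by maximality. For (b) the paper proves the stronger assertion $[\h,y]\cap\p=\{0\}$ for almost all $y\in\ce$ (which suffices, since $\ce\subset\be\subset\p$): write $\h=\es\oplus\hhe$ with $\hhe\cap\p=\{0\}$, so that every nonzero $\hat x\in\hhe$ has a nonzero component $x_-$ in the opposite nilradical $\n_-$; choose $n\in\exp(\n)$ with $n{\cdot}y=p_\te(y)\in\te_1$ generic, and check that $[x_-,y]=n^{-1}{\cdot}[n{\cdot}x_-,p_\te(y)]$ retains a nonzero $\n_-$-component, whence $[\hat x,y]\notin\p$. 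This use of the opposite nilradical and of the conjugation of $y$ into $\te_1$ by $\exp(\n)$ is the idea your proposal is missing; without it, or a substitute for it, your identity delivers only the inequality $\codim_\me\ov{H{\cdot}\ce}\ge c$.
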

\begin{proof}  
1. We assume that properties ($\eus P_1$)-($\eus P_5$) are satisfied for $S,B$, and $H$.
Then $\el=\es\oplus\te_1$ is a Levi subalgebra of $\p=\Lie P$. Moreover, by~\cite[Lemma\,3]{p90}, one 
has $\g^y=\el$ for almost all $y\in\te_1$. Let $\n$ denote the nilradical of $\p$ and $\n_-$ the opposite nilradical, i.e., 
$\g=\p\oplus\n_-$. Then $\ce=\me\cap\be=\me\cap(\n\oplus\te_1)$ and $\tilde\ce=\me\cap\ut=\me\cap\n$.

2. By part~1, we have $\ce^\perp=\me^\perp+(\n\oplus\te_1)^\perp=\h+(\es\oplus\n)=\h+\n$. For 
almost all $y\in\ce$,  $\g^y$ is a Levi subalgebra of $\p$ (see Step~1 in \cite[Theorem\,2.2.6]{p99}). 
Therefore $ ([\g,y]\cap\ce)^\perp=\g^y+\ce^\perp=\h+(\g^y+\n)=\h+\p$. Hence
\beq      \label{eq:gy-cap-ce}
     [\g,y]\cap\ce=(\h+\p)^\perp=\me\cap\n=\tilde\ce \ \text{ and } \ \dim([\g,y]\cap\ce)=c .
\eeq
\indent
3. Let us prove that $[\h,y]\cap\ce=\{0\}$ for almost all $y\in\ce$. (For $c=0$, this follows 
from~\eqref{eq:gy-cap-ce}, and this was already used in~\cite{p90,p99} for proving 
Theorem~\ref{thm:Cartan-sph}(1).) By part~2, $[\g,y]\cap\ce=\tilde\ce\subset\n$. Therefore, it suffices to 
prove that $[\h,y]\cap\n=\{0\}$. Actually, we shall show that $[\h,y]\cap\p=\{0\}$.

Since $\h\cap\p=\es$, we can write $\h=\es\oplus\hhe$, where $\hhe\cap\p=\{0\}$. Then every nonzero
element of $\hhe$ has a nonzero component in $\n_-$ w.r.t. the sum $\g=\p\oplus\n_-$. Write $y=y'+y''$, 
where $y'\in\te_1$ and $y''\in\n$, i.e.,  $p_\te(y)=y'$. Let us say that $y'\in\te_1$ is generic, if 
$\g^{y'}=\el$. Take a nonzero $x=x_\es+\hat x\in \h$, where $x_\es\in\es$ and $\hat x\in\hhe$. Write 
$\hat x=x_\p+x_-$, where $x_\p\in\p$ and $x_-\in\n_-$. If $\hat x\ne 0$, then $x_-\ne 0$ as well. We have
\[
      [x,y]=[\hat x, y]=[x_\p,y]+[x_-,y] 
\]
and here $[x_\p,y]\in \p$. Since $y'$ is a generic element of $\te_1$, there is $n\in N=\exp(\n)$
such that $n{\cdot}y=y'$. Then $n{\cdot}[x_-,y]=[n{\cdot}x_-,y']$. Again, since $y'$ is generic, the last 
bracket has a nonzero component in $\n_-$. Therefore, the same holds for
$[x_-,y]=n^{-1}{\cdot}[n{\cdot}x_-,y']$.

Thus, we proved that 
$[\h,y]\cap\p=\{0\}$ for almost all $y\in\ce$ and thereby $[\h,y]\cap\ce=\{0\}$.

4. Since $[\h,y]\cap\ce=\{0\}$ for almost all $y\in\ce$, the intersection $H{\cdot}y\cap\ce$ is finite.
Hence
\begin{multline*}
   \dim \ov{H{\cdot}\ce}=\dim H{\cdot}y+\dim\ce=\dim H-\dim S+(c+r)\\ 
    =\dim\me-(2c+r)+(c+r)=\dim\me-c .     \qedhere
\end{multline*}
\end{proof}

\begin{thm}   \label{thm:peresechenie} 
For {\bfseries any} homogeneous space $G/H$, we have 

(1) \ \ $\dim \bigl(\me\cap\fngs\bigr)=\dim\me-r$ and
all irreducible components of\/ $\me\cap\fngs$ have this dimension.

(2) \ \ The intersection $\me\cap\fngs$ is proper if and only if $r=\rk\g$.
\end{thm}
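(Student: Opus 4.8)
The plan is to identify $\me\cap\fngs$, up to a fibre-bundle factor, with the fibre over $0$ of the invariant moment map of the cotangent bundle of $G/H$, and then to feed in Knop's equidimensionality theorem. Since the statement is for an \emph{arbitrary} homogeneous space, no quasiaffineness is available, but $G/H$ is smooth, so the cotangent-bundle machinery applies.

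First I would set up notation. Let $\psi:\g^*\to\g^*\md G\simeq\bbk^\ell$ be the adjoint quotient, where $\ell=\rk\g$, so that $\fngs=\psi^{-1}(\psi(0))=\psi^{-1}(0)$. Write $\mathsf T^*(G/H)=G\times_H\me$ for the cotangent bundle, with moment map $\mu:[g,\xi]\mapsto\Ads(g)\xi$ and invariant moment map $\Psi=\psi\circ\mu:\mathsf T^*(G/H)\to\g^*\md G$. Since $\fngs$ is $G$-stable, one has $\Psi^{-1}(0)=\{[g,\xi]\mid\xi\in\me\cap\fngs\}=G\times_H(\me\cap\fngs)$. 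As $G\times_H(-)$ is a fibre bundle over $G/H$ with fibre the second factor, $\dim\Psi^{-1}(0)=\dim(G/H)+\dim(\me\cap\fngs)=\dim\me+\dim(\me\cap\fngs)$, and $\Psi^{-1}(0)$ is pure of a given dimension $D$ if and only if $\me\cap\fngs$ is pure of dimension $D-\dim\me$ (each irreducible component of $G\times_H Z$ has dimension $\dim(G/H)$ plus the dimension of the corresponding component of $Z$, regardless of whether $H$ is connected).

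Next I would invoke Knop's analysis of the invariant moment map \cite{kn90}: the image $\overline{\Psi(\mathsf T^*(G/H))}$ has dimension $r=r_G(G/H)$, and $\Psi$ is equidimensional onto its image. Hence every nonempty fibre of $\Psi$ has dimension $\dim\mathsf T^*(G/H)-r=2\dim\me-r$; in particular $\Psi^{-1}(0)$ is pure of dimension $2\dim\me-r$. Comparing with the previous paragraph gives $\dim(\me\cap\fngs)=\dim\me-r$ with every irreducible component of this dimension, which is (1). Part (2) is then formal: in $\g^*$ the subvarieties $\me$ and $\fngs$ have codimensions $\dim\h$ and $\rk\g$, so the expected codimension of a proper intersection is $\dim\h+\rk\g$; by (1) every component of $\me\cap\fngs$ has codimension $\dim\h+r$, so the intersection is proper exactly when $r=\rk\g$.

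The crux of this argument is the precise input from \cite{kn90}: that the invariant moment map of $\mathsf T^*(G/H)$ is equidimensional (indeed flat) onto an image of dimension $r$, so that the special fibre over $0$ cannot jump in dimension. Without this equidimensionality, the identification in the first paragraph yields only the inequality $\dim(\me\cap\fngs)\ge\dim\me-r$ (via upper semicontinuity of fibre dimension, using that $0$ lies in the image). The remaining points, which I would verify carefully, are the $G$-equivariant identification $\Psi^{-1}(0)=G\times_H(\me\cap\fngs)$ and the bookkeeping $\dim\mathsf T^*(G/H)=2\dim\me$; as a consistency check, in the symmetric case $\me=\g_1$ this recovers the classical Kostant--Rallis fact that $\g_1\cap\fngs$ is equidimensional of dimension $\dim\g_1-r$.
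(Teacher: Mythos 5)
Your proposal is correct and takes essentially the same route as the paper: both identify the fibre of the invariant moment map over the image of $0$ with $G\times_H(\me\cap\fngs)$, invoke Knop's results from \cite{kn90} (equidimensionality of the moment-map quotient, image of dimension $r$) to conclude this fibre is pure of dimension $2\dim\me-r$, divide out the base $G/H$, and settle (2) by the same codimension count using $\dim\fngs=\dim\g-\rk\g$. The only cosmetic difference is that the paper justifies equidimensionality by factoring through Knop's normalised variety $M_\co$ and the finite surjection $\tau:M_\co\md G\to \ov{G{\cdot}\me}\md G$ (Satz 6.6 and 7.1), whereas you cite the equidimensionality of the invariant moment map onto its image directly.
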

\begin{proof}
(1) To use some results of F.\,Knop~\cite{kn90}, we need more notation. Let 
$T^*_{\co}\simeq G\times_H\me$ be the cotangent bundle of $\co=G/H$ and 
$\widetilde\Phi: T^*_\co\to \g^*$  the associated moment map. Let $\widetilde M_\co=\ov{G{\cdot}\me}$ 
denote the closure of the image of $\widetilde\Phi$ in $\g^*$. Finally, $M_\co$ is the spectrum of the integral closure of $\bbk[\widetilde M_\co]$
in $\bbk[T^*_\co]$. This yields the commutative diagram of morphisms
\beq   \label{diagr-knop}
\xymatrix{
T^*_\co\ar^(.45)\psi[dr]\ar^\Phi[r]\ar@/_3pc/@{->}_{\tilde\psi}[rrd]\ar@/^2pc/@{->}^{\widetilde\Phi}[rr] 
& M_\co\ar^{\tilde\tau}[r]\ar^{\pi_M}[d] & 
\widetilde M_\co\ar@{^{(}->}^i[r]\ar^{\pi_{\tilde M}}[d] &  \g^*\ar^{\pi_{\g^*}}[d] \\
 & M_\co\md G\ar^{\tau}[r] & \widetilde M_\co\md G\ar@{^{(}->}[r] &  \g^*\md G 
}
\eeq
where the vertical arrows are quotient morphisms and $\tilde\Phi=\tilde\tau\circ\Phi$~see~\cite[Sect.\,6]{kn90}. By construction,
$\widetilde\tau$ is finite and onto. Then so is $\tau$. It is proved in~\cite{kn90} that 
\begin{itemize}
\item \ $M_\co\md G$ is an affine space, see~Satz\,6.6(b);
\item \ $\psi$ is equidimensional and onto, see~Satz\,6.6(c);
\item \ $\dim M_\co\md G=r$, see~Satz\,7.1.
\end{itemize} 
Then $\tilde\psi=\tau\circ\psi$ is equidimensional and onto, too. Hence, for $\bar 0=\pi_{\tilde M}(0)$, we 
obtain $\dim\tilde\psi^{-1}(\bar 0)=\dim T^*_\co-r=2\dim\me-r$. Note that 
$\pi_{\tilde M}^{-1}(\bar 0)=\ov{G{\cdot}\me}\cap\fngs$. Hence
\[
   \tilde\psi^{-1}(\bar 0)=\widetilde\Phi^{-1}(\ov{G{\cdot}\me}\cap\fngs)=G\times_H(\me\cap\fngs) .
\]
Therefore, $\dim(\me\cap\fngs)=2\dim\me-r-\dim G/H=\dim\me-r$ and all irreducible components have this dimension, as required.

(2) By definition, the intersection of $\me$ and $\fngs$ in $\g^*$ is proper if and only if 
\[\dim 
\bigl(\me\cap\fngs\bigr)=\dim\me+\dim\fngs-\dim\g=\dim\me-\rk\g .  \qedhere
\]
\end{proof}

If $H$ is reductive, then $\fN_H(\me)=\{m\in\me\mid \ov{H{\cdot}m}\ni 0\}$. Therefore,
$\fN_H(\me)\subset \me\cap\fngs$. As we prove below, this inclusion actually holds in a more general
situation.

\begin{thm}                    \label{thm:null-c}
Suppose that $G/H$ be quasi-affine  and\/ $\bbk[\me]^H$ is finitely generated. Then
\begin{itemize}
\item[\sf (i)] \   $\fN_H(\me)\subset \me\cap\fngs$;
\item[\sf (ii)] \  if $c=0$, then $\fN_H(\me)= \me\cap\fngs$.
\end{itemize}
\end{thm}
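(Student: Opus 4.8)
Part (i). The plan is to use that both nullcones are cut out by invariants and that $G$-invariants restrict to $H$-invariants. Since the $H$-action on $\me$ is linear, $\bbk[\me]^H$ is graded and $\fnme=\pi^{-1}(\pi(0))$ is precisely the common zero-locus of its positive-degree homogeneous elements; similarly $\fngs$ is the zero-locus of the positive-degree homogeneous elements of $\bbk[\g^*]^G$. As $\me=\h^\perp$ is an $H$-stable subspace of $\g^*$ and $H\subseteq G$, restriction to $\me$ maps $\bbk[\g^*]^G$ into $\bbk[\me]^H$ preserving degree. Hence every homogeneous $F\in\bbk[\g^*]^G$ of positive degree restricts to $0$ or to a positive-degree element of $\bbk[\me]^H$, and in either case $F$ vanishes on $\fnme$; letting $F$ vary gives $\fnme\subseteq\fngs$, whence $\fnme\subseteq\me\cap\fngs$. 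This argument uses only finite generation of $\bbk[\me]^H$, not reductivity of $H$.

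Part (ii). It remains to prove $\me\cap\fngs\subseteq\fnme$ when $c=0$. I would identify $\g\cong\g^*$ via $\Psi$, so that $\fngs$ becomes the nilpotent cone $\N(\g)$, and pass to the morphism $\bar\rho\colon\me\md H\to\g^*\md G$ induced by the restriction homomorphism of part (i); it satisfies $\pi_{\g^*}|_\me=\bar\rho\circ\pi$. Writing $\bar 0=\pi(0)$ and $\hat 0=\pi_{\g^*}(0)$, this yields $\me\cap\fngs=\pi^{-1}(\bar\rho^{-1}(\hat 0))$ and $\fnme=\pi^{-1}(\bar 0)$, so the task reduces to showing $\bar\rho^{-1}(\hat 0)=\{\bar 0\}$.

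The crux is to bring an arbitrary point of $\me\md H$ back to the generalised Cartan subspace $\ce$. For $c=0$, Theorem~\ref{thm:Cartan-sph}(3) gives a finite group $W$ and an isomorphism $\bbk[\me]^H\cong\bbk[\ce]^W$ by restriction; dually, $\me\md H\cong\ce\md W$ and $\pi|_\ce$ is the surjection $\ce\to\ce\md W\cong\me\md H$. Thus any $q\in\bar\rho^{-1}(\hat 0)$ can be written $q=\pi(x)$ with $x\in\ce$, and then $\pi_{\g^*}(x)=\bar\rho(\pi(x))=\hat 0$ shows $x\in\ce\cap\N(\g)$. Since for $c=0$ the subspace $\ce$ contains no nonzero nilpotent elements of $\g$, we get $x=0$ and $q=\bar 0$; hence $\bar\rho^{-1}(\hat 0)=\{\bar 0\}$ and $\me\cap\fngs=\pi^{-1}(\bar 0)=\fnme$.

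The main obstacle is exactly this last reduction. The inclusion of part (i) together with a dimension count—using $\dim(\me\cap\fngs)=\dim\me-r$ from Theorem~\ref{thm:peresechenie} and the equidimensionality of $\pi$—would only show that both sides are pure of dimension $\dim\me-r$ and that $\bar\rho^{-1}(\hat 0)$ is finite, which does not force equality when $\me\cap\fngs$ is reducible. What makes the argument close up is the sphericity input that $\ce$ simultaneously surjects onto $\me\md H$ and meets $\N(\g)$ only at the origin, which pins the fibre $\bar\rho^{-1}(\hat 0)$ down to the single point $\bar 0$ and thereby makes the role of the hypothesis $c=0$ transparent.
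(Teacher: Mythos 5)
Your proof is correct. For part {\sf (i)} you give, in effect, the paper's own argument in elementary form: the paper runs the observation that positive-degree homogeneous $G$-invariants restrict to positive-degree (or zero) $H$-invariants through Knop's diagram~\eqref{diagr-knop2}, landing $\fnme$ inside $\tilde\psi^{-1}(\bar 0)=G\times_H\bigl(\me\cap\fngs\bigr)$, whereas you state the same mechanism directly as an inclusion of zero loci of invariants; nothing is lost. For part {\sf (ii)} your route is genuinely different. The paper's proof is a dimension/equidimensionality argument: for $c=0$ both $\me\md H$ and $\ov{G{\cdot}\me}\md G$ have dimension $r$, Knop's theorem that $\tilde\psi$ is equidimensional makes the induced map $f:\me\md H\to \ov{G{\cdot}\me}\md G$ equidimensional, and since both varieties are conical, $f$ is finite, so $f^{-1}(\bar 0)$ is the single point $\pi_\me(0)$. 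You instead pin down the fibre $\bar\rho^{-1}(\hat 0)$ by lifting points of $\me\md H$ to the Cartan subspace: $\pi\vert_\ce$ is onto $\me\md H$ by Theorem~\ref{thm:Cartan-sph}(3), and $\ce$ contains no nonzero nilpotent elements when $c=0$ (as stated in Section~\ref{sect:general}), so the fibre is $\{\bar 0\}$. This realises, and in fact sharpens, the alternative sketched in Remark~\ref{rem:another-proof}(a): that sketch still derives finiteness of $f$ (via $\ov{G{\cdot}\ce}=\ov{G{\cdot}\me}$), while you bypass finiteness of $\bar\rho$ altogether. The trade-off: the paper's argument yields finiteness of $f$ as a structural by-product and stays entirely within the moment-map formalism of \cite{kn90}; yours avoids Knop's equidimensionality theorem but leans on the Kostant--Rallis-type properties of $\ce$ from \cite{p90}. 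Your closing remark is also apt: since $\me\cap\fngs$ may be reducible, the inclusion from {\sf (i)} together with purity of dimension (Theorem~\ref{thm:peresechenie}) cannot by itself force equality, so some input of this kind is indeed unavoidable.
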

\begin{proof} 
{\sf (i)} \ Consider a version of the commutative diagram~\eqref{diagr-knop} :
\beq     \label{diagr-knop2}
\xymatrix{
\fnme\ar@{^{(}->}[r] & \me\ar@{^{(}->}^j[r]\ar^{\pi_\me}[d] &
G{\times}_H\me \ar^{\tilde\Phi}[r]\ar^(.45){\tilde\psi}[dr]\ar[d] 
& \ov{G{\cdot}\me}\ar^{\pi_{\tilde M}}[d] \ar@{^{(}->}[r] & \g^* \\
& \me\md H \ar^(.4){\sim}[r] & (G{\times}_H\me)\md G\ar^f[r] & \ov{G{\cdot}\me}\md G &
}
\eeq 
Here $\ov{G{\cdot}\me}=\widetilde M_\co$ and $j$ is the embedding of $\me$ as fibre of $\{H\}\in G/H$.
As above, all vertical arrows are quotient morphisms. Since $\bbk[\me]^H$ is finitely generated, we get
two new objects in the lower row. Using the path through $\pi_\me$, we see that $\fnme$ maps into
$\bar 0=\pi_{\tilde M}(0)\in \ov{G{\cdot}\me}\md G$. On the other hand, using the path through $j$ and
$\tilde\psi$,  we see that $j(\fnme)\subset \tilde\psi^{-1}(\bar 0)=G\times_H\bigl(\me\cap\fngs\bigr)$, i.e.,
$\fnme\subset \me\cap\fngs$.

{\sf (ii)} \ Here we use a fragment of the previous diagram:
\begin{center}
$\xymatrix{
 \me\ar@{^{(}->}^{\tilde\Phi\circ j}[r]\ar^{\pi_\me}[d] & \ov{G{\cdot}\me}\ar^{\pi_{\tilde M}}[d] \\
 \me\md H \ar^f[r] & \ov{G{\cdot}\me}\md G 
}$
\end{center} 
If $c=0$, then $\dim \me\md H=\dim\ov{G{\cdot}\me}\md G=r$. Since $\tilde\psi$ is equidimensional, the same is true for $f$. The affine varieties $\me\md H$ and $\ov{G{\cdot}\me}\md G$ are conical, hence
$f$ is finite and $\pi_\me(0)=f^{-1}(\bar 0)$. Therefore,
$\fnme=\ov{G{\cdot}\me}\cap \fngs\cap\me=\fngs\cap\me$.
\end{proof}

\begin{rmk}  \label{rem:another-proof}
(a) There is an alternate proof of Theorem~\ref{thm:null-c}{\sf (ii)} that exploits properties of the Cartan 
subspace $\ce=\be\cap\me$. 
In the spherical case, the projection $p_\te:\be\to \te$ maps $\ce$ isomorphically onto $\te_1$ and 
one proves that $\ov{G{\cdot}\te_1}=\ov{G{\cdot}\ce}=\ov{G{\cdot}\me}$.  Then the finiteness of the morphism
$f:\me\md H\to \ov{G{\cdot}\me}\md G$ is obtained without using the equidimensional map $\tilde\psi$.

(b) For the symmetric varieties, the equality $\fN_{G_0}(\g_1)=\g_1\cap\fng$ is proved in~\cite{kr71}.
\end{rmk}

\noindent
For the affine homogeneous spaces, one can strengthen Theorem~~\ref{thm:null-c} as follows.

\begin{prop}        \label{prop:granitsy}
If\/ $G/H$ is affine, then 
\begin{itemize}
\item[\sf (i)] \ $\dim\me-2c-r \le \dim \fnme\le  \dim\me-c-r$;
\item[\sf (ii)] \ $\fnme=\me\cap\fngs$ \ if and only if \ $G/H$ is spherical.
\end{itemize}
\end{prop}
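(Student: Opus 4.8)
The plan is to derive both parts by assembling the dimension estimates already in hand, namely Theorem~\ref{thm:def-N-c}, Theorem~\ref{thm:peresechenie}, and Theorem~\ref{thm:null-c}, together with the stability of $(H:\me)$ that holds because $H$ is reductive. The key numerical input is that, for reductive $H$, the action $(H:\me)$ is orthogonal and hence stable, so that $\dim\me\md H=2c+r$; this is recorded in the discussion preceding Theorem~\ref{thm:def-N-c} and is exactly what makes the bookkeeping work.

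For part {\sf (i)}, I would obtain the lower bound directly from the fact that the nullcone is a fibre of $\pi$, so its dimension is at least that of a generic fibre: $\dim\fnme\ge\dim\me-\dim\me\md H=\dim\me-2c-r$. For the upper bound I would invoke Theorem~\ref{thm:def-N-c}, which gives $\dif\fnme\le c$. Unwinding the definition $\dif\fnme=\dim\fnme-(\dim\me-\dim\me\md H)$ and substituting $\dim\me\md H=2c+r$ yields $\dim\fnme\le c+(\dim\me-2c-r)=\dim\me-c-r$. Thus {\sf (i)} is essentially immediate from the cited bounds.

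For part {\sf (ii)}, the implication ``$G/H$ spherical $\Rightarrow$ equality'' is precisely Theorem~\ref{thm:null-c}{\sf (ii)} specialised to $c=0$. For the converse I would argue by a dimension comparison. By Theorem~\ref{thm:peresechenie}(1), the variety $\me\cap\fngs$ is pure of dimension $\dim\me-r$, and by Theorem~\ref{thm:null-c}{\sf (i)} one always has the inclusion $\fnme\subseteq\me\cap\fngs$. If this inclusion is an equality of sets, then $\dim\fnme=\dim\me-r$; comparing with the upper bound from part {\sf (i)} gives $\dim\me-r\le\dim\me-c-r$, which forces $c\le 0$, hence $c=0$, i.e.\ $G/H$ is spherical. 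Equivalently, one may phrase this as: when $c>0$ the bound in {\sf (i)} makes $\fnme$ of strictly smaller dimension than the pure variety $\me\cap\fngs$, so the inclusion is necessarily proper.

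I do not anticipate a genuine obstacle here, since the proposition is an assembly of previously established dimension estimates rather than a new geometric argument. The only point demanding care is verifying that the stability identity $\dim\me\md H=2c+r$ is legitimately available in the affine (equivalently, reductive) case—but this is exactly the hypothesis $G/H$ affine, which forces $H$ reductive. Everything else is routine manipulation of the defect and the purity statement.
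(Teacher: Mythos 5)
Your proof is correct and takes essentially the same route as the paper: part {\sf (i)} combines the stability identity $\dim\me\md H=2c+r$ with the defect bound of Theorem~\ref{thm:def-N-c}, and part {\sf (ii)} is the same dimension comparison of $\fnme$ against the pure variety $\me\cap\fngs$ from Theorem~\ref{thm:peresechenie}, with Theorem~\ref{thm:null-c}{\sf (ii)} supplying the spherical direction. The only cosmetic difference is that you route the inclusion $\fnme\subset\me\cap\fngs$ through Theorem~\ref{thm:null-c}{\sf (i)}, whereas in the affine case it also follows directly from $\fnme=\{x\in\me\mid \ov{H{\cdot}x}\ni 0\}$, as noted before that theorem.
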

\begin{proof} 
{\sf (i)} \ The first inequality means that $\dim\fnme\ge \dim\me-\dim\me\md H$. 
By Theorem~\ref{thm:def-N-c}, if $G/H$ is affine, then
\[
     \dif\fnme =\dim\fnme-(\dim\me -(2c+r))\le c .
\] 
Hence the second inequality.

{\sf (ii)} \ It follows from part {\sf (i)} and Theorem~\ref{thm:peresechenie} that 
{$\dim\fnme<\dim \bigl(\me\cap\fngs\bigr)$} unless $c=0$.
\end{proof}

\begin{rmk}   \label{rem:2-svoistva}
The equality in the first place in Proposition~\ref{prop:granitsy}{\sf (i)} is equivalent to that $\pi$ is
equidimensional, while the equality in the second place means that $\dim\fnme=c$, i.e., it is maximal 
possible. Hence, for $c=0$,  both properties hold (as we already know). For $c=1$ exactly one property 
takes place, i.e., either $\pi$ is equidimensional, or $\dif\fnme=1$.
\end{rmk}

Let us say that a reductive subgroup $H\subset G$ is $s$-{\it regular}, if $\h$ contains a regular semisimple element of $\g$.

\begin{prop}       \label{prop:2.6}
Suppose that $G/H$ is affine and $S^o$ is a torus. If $\dif\fnme=c$, then $H$  is $s$-regular. 
\end{prop}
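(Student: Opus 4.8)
The plan is to convert $s$-regularity into a single numerical inequality and then read it off from the nullcone dimension bound \eqref{eq:gerry} once the defect attains its maximal value $c$.

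First I would reformulate the conclusion. Fix a maximal torus $T_H\subset H$ with $\te_H\subset\te$. Since $\h$ is reductive, the Jordan decomposition in $\h$ agrees with that in $\g$, so a semisimple element of $\h$ is semisimple in $\g$ and is $H$-conjugate into $\te_H$; as regularity in $\g$ is invariant under conjugation, $H$ is $s$-regular if and only if $\te_H$ contains a regular element of $\g$. The latter holds if and only if no root of $(\g,\te)$ vanishes identically on $\te_H$, i.e. $\z_\g(\te_H)=\te$, i.e. $\dim\g^{T_H}=\rk\g$. Because $\te\subseteq\g^{T_H}$ always gives $\dim\g^{T_H}\ge\rk\g$, it suffices to prove $\dim\g^{T_H}\le\rk\g$.

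Next I would feed the hypothesis $\dif\fnme=c$ into \eqref{eq:gerry}. Since $G/H$ is affine, $H$ is reductive, the action $(H:\me)$ is stable, and $\dim\me\md H=2c+r$; hence a generic fibre of $\pi$ has dimension $\dim\me-2c-r$ and the assumption $\dif\fnme=c$ forces $\dim\fnme=\dim\me-c-r$. Substituting into \eqref{eq:gerry} and using $\dim\me=\dim\g-\dim\h$ yields
\begin{equation*}
\dim\g^{T_H}\le\dim\g-2(\dim\me-c-r)=2\dim\h-\dim\g+2c+2r.
\end{equation*}
By Theorem~\ref{thm:coiso-props}(2), $\dim G+\dim S-2\dim H=2c+r$, so $\dim S=2\dim\h-\dim\g+2c+r$ and the right-hand side above equals $\dim S+r$. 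At this point I invoke the standing hypothesis that $S^o$ is a torus: then $\dim S=\dim S^o=\rk S$, and Theorem~\ref{thm:coiso-props}(3) gives $\rk S=\rk G-r$. Therefore $\dim S+r=\rk G=\rk\g$, whence $\dim\g^{T_H}\le\rk\g$ and, combined with the reverse inequality, $\dim\g^{T_H}=\rk\g$; so $H$ is $s$-regular.

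The conceptual heart — and the only genuinely non-formal step — is the first paragraph: recognising that $s$-regularity is precisely the condition $\dim\g^{T_H}=\rk\g$, and that the self-dual nullcone estimate \eqref{eq:gerry} is calibrated so that maximal defect $\dif\fnme=c$ forces this equality. Everything afterwards is bookkeeping with the rank and dimension identities of Theorem~\ref{thm:coiso-props}, the one subtle point being the compatibility of Jordan decomposition for the reductive subalgebra $\h\subset\g$, which guarantees that a semisimple element of $\h$ is semisimple (and possibly regular) in $\g$.
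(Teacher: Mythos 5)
Your proof is correct and follows essentially the same route as the paper's: both reduce $s$-regularity to the numerical condition $\dim\g^{T_H}=\rk\g$ and then force this equality from the self-dual nullcone bound \eqref{eq:gerry} once $\dif\fnme=c$. The only difference is bookkeeping — the paper first deduces $\dim\fnme=\dim U$ via Remark~\ref{rem:dim-H} and reads equality off the chain \eqref{eq:esche-odno}, whereas you substitute the identities of Theorem~\ref{thm:coiso-props}(2)--(3) directly; these computations are equivalent, since that remark is itself a consequence of those identities.
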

\begin{proof}
If $S^o$ is a torus, then $\dim H=\dim U+\dim S-c$ (see Remark~\ref{rem:dim-H}) and 
$\dim\me-\dim\me\md H=\dim H-\dim S$. Then 
\[
   \dif\fnme=\dim\fnme-\dim H+\dim S=\dim\fnme-\dim U+c .
\]
If $\dif\fnme=c$, then $\dim\fnme=\dim U$. By Eq.~\eqref{eq:esche-odno}, this means that  
$\dim \g^{T_H}=\rk\g$, i.e., $T_H$ contains a regular semisimple element of $G$. 
\end{proof}

In particular, Proposition~\ref{prop:2.6} asserts that if $c=0$ and $S^o$ is a torus, then $H$ is 
$s$-regular. However, it is easily seen that if a symmetric subgroup $G_0$ does not contain infinite
normal subgroups of $G$ (e.g. if $G$ is simple), then it is $s$-regular regardless of the structure of $S$. 
This suggests that the condition on $S$ in Proposition~\ref{prop:2.6} is superfluous. 

\begin{thm}    \label{thm:s-reg}
If \ $G/H$ is affine and $\dif\fnme=c$, then $H$ is $s$-regular. 
\end{thm}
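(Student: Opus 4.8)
\smallskip
\noindent\textbf{Proof proposal.}
This statement refines Proposition~\ref{prop:2.6}, whose only extra hypothesis is that $S^o$ be a torus, so the plan is to reduce the general reductive case to it. Recall first that $H$ is $s$-regular if and only if $\dim\g^{T_H}=\rk\g$, and that for reductive $H$ one has $\dim\me\md H=2c+r$; combined with Remark~\ref{rem:dim-H}, the hypothesis $\dif\fnme=c$ is equivalent to $\dim\fnme=\dim U-\dim U_S$, which is exactly the equality forced in Proposition~\ref{prop:2.6} when $U_S=\{1\}$. To handle a general reductive $S$, I would pass to the \emph{principal isotropy reduction}: set $G_1=Z_G(S)$ and $H_1=Z_H(S)$. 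As $S$ is reductive, so is $H_1$, and $G_1/H_1$ is affine; taking $S$-invariants in $\g=\h\oplus\me$ gives $\z_\g(S)=\z_\h(S)\oplus\me^S$, so that $\me^S$ is the coisotropy space of $(G_1,H_1)$. By $(\eus P_1)$ the semisimple part $\es'=[\es,\es]$ equals $[\el,\el]$ for the Levi $\el=Z_G(t)$, the centre $\z(\es)$ is a torus, and the generic stabiliser of $(H_1:\me^S)$ is $Z(S)$, whose identity component $Z(S)^o$ is a torus. Thus $(G_1,H_1)$ is precisely of the type covered by Proposition~\ref{prop:2.6}.

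Next I would transfer the invariants and the defect through this reduction by the Luna--Richardson theorem~\cite{lr79}, exactly in the spirit of part $2^o$ of the proof of Theorem~\ref{thm:def-N-c}. Restriction induces $\bbk[\me]^H\isom\bbk[\me^S]^{N_H(S)}$, hence an isomorphism $\me\md H\cong\me^S\md H_1$ (up to a finite group, which is irrelevant for dimensions), giving $c_{G_1}(G_1/H_1)=c$ and $r_{G_1}(G_1/H_1)=r$; the latter uses $\rk Z_G(S)=\rk\g-\rk\es'$, valid because $\es'$ is a regular subalgebra. One also has $\fnme\cap\me^S=\fN_{H_1}(\me^S)$ and $\dim\me-\dim\me^S=\dim H-\dim H_1$. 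The technical heart is the nullcone dimension transfer
\[
   \dim\fnme=\dim\fN_{H_1}(\me^S)+(\dim\me-\dim\me^S),
\]
equivalently that every top-dimensional component of $\fnme$ meets the slice $\me^S$; granting this (or merely the inequality ``$\le$'', which already suffices), one obtains $\dif\fN_{H_1}(\me^S)=\dif\fnme=c$. Proposition~\ref{prop:2.6} applied to $(G_1,H_1)$ then yields a regular semisimple element $x$ of $\z_\g(S)$ lying in $\z_\h(S)\subset\h$.

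Finally I would reassemble a regular semisimple element of $\g$ inside $\h$. Choose $w\in\es'$ regular in $\es'$ and a generic $a\in\z(\es)$; then $x,w,a$ all lie in $\h$, are semisimple, and commute pairwise (because $x$ centralises $\es$ and $\z(\es)$ is central in $\es$). Put $y=x+w+a$. For generic $a$ the operator $\ad a$ is invertible on the sum of the nonzero $\z(\es)$-weight spaces, so $\z_\g(y)=\z_{L_1}(x+w)$, where $L_1=Z_G(\z(\es))$ is a reductive subalgebra of full rank $\rk\g$ containing $\te$. Inside $L_1$ one has $\z_\g(\es)=\z_{L_1}(\es')$, and since $\es'$ is a regular subalgebra, $\rk\es'+\rk\z_{L_1}(\es')=\rk\g$. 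As $x$ is regular in $\z_{L_1}(\es')$ and $w$ is regular in $\es'$, the commuting tori $\z_{\z_{L_1}(\es')}(x)$ and $\z_{\es'}(w)$ span a Cartan subalgebra of $L_1$, and for generic choices $\z_{L_1}(x+w)$ equals it; hence $y$ is regular semisimple in $\g$, so $H$ is $s$-regular.

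The main obstacle is the nullcone dimension transfer of the second paragraph: one must control how the components of $\fnme$ meet the slice $\me^S$, which is the ``rather technical'' Luna--Richardson reduction already invoked for Theorem~\ref{thm:def-N-c}. A secondary point requiring care is the genericity in the final assembly, namely that the regular elements of $\z_\g(S)$ lying in $\z_\h(S)$ form a dense subset (guaranteed by Proposition~\ref{prop:2.6}) and that a generic such $x$, together with generic $w$ and $a$, avoids all root hyperplanes of $L_1$.
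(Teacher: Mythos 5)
Your reduction step is, up to cosmetic differences, the paper's own. The paper works with the pair $(K,\,K\cap H)$, where $\z_\g(\es)=\ka\oplus\z(\es)$ and $S^o\cap K$ is finite, instead of your $(Z_G(S),Z_H(S))$; your pair carries the extra central torus $Z(S)^o$ (so the action of $Z_G(S)$ on $Z_G(S)/Z_H(S)$ has infinite kernel, contrary to the paper's standing assumption), but this is harmless and is removed exactly by passing to $K$. The two facts you must import are also the ones the paper imports: the defect inequality you propose to ``grant'' is precisely \cite[Lemma\,3.4]{p95}, and the preservation of complexity is quoted from \cite[1.9]{MAnn}; combined with Theorem~\ref{thm:def-N-c} for the reduced pair, these force the defect of the reduced nullcone to equal $c$, so Proposition~\ref{prop:2.6} applies there. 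Up to and including this application, your proposal and the paper's proof coincide.

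The genuine gap is in your final reassembly. You have $x\in\te_{H_1}\subset\h$ regular in $\g_1=\z_\g(\es)$, $w\in\te_{S'}\subset\es'$ regular in $\es'$, and generic $a\in\z(\es)$, and you must show that no root of $\g$ vanishes on $y=x+w+a$. Genericity of $a$ disposes of the roots that are nonzero on $\z(\es)$, and genericity of $w$ within the regular locus of $\te_{S'}$ disposes of the roots of $L_1$ that are nonzero on $\te_{S'}$. But a root $\alpha$ vanishing identically on $\te_S=\te_{S'}\oplus\z(\es)$ satisfies $\alpha(y)=\alpha(x)$, and $x$ is confined to $\te_{H_1}$: if $\alpha$ is a root of $\g_1$, regularity of $x$ saves you, but if not, nothing in your setup prevents $\alpha$ from vanishing on $\te_{H_1}$ as well, and then \emph{every} admissible choice of $x,w,a$ gives a singular $y$. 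Hence your assertion ``for generic choices $\z_{L_1}(x+w)$ equals it'' secretly requires the structural fact that every root of $\g$ vanishing on a Cartan subalgebra of $\es$ is a root of $\z_\g(\es)$, i.e. $[\z_\g(\te_S),\z_\g(\te_S)]=[\ka,\ka]$. This is exactly the claim the paper isolates and proves at the end of its argument, using the special position of $\es$ guaranteed by $(\eus P_1)$; it is not a genericity statement, and it is false for an arbitrary subalgebra of Levi-commutant type in non-simply-laced $\g$, because orthogonal roots may sum to a root. For instance, in $\g=\mathfrak{so}_5$ let $\es'$ be the $\mathfrak{sl}_2$ through the short root $e_2$: the root $e_1$ vanishes on the Cartan $\langle h_{e_2}\rangle$ of $\es'$, yet $e_1$ is not a root of $\z_\g(\es')$, since $e_1+e_2$ is a root. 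So the item you classify as a ``secondary point requiring care'' (avoiding the root hyperplanes of $L_1$) is in fact the heart of the proof, and it cannot be settled by choosing $x,w,a$ generically; it needs the structure theory of the generic stabiliser that the paper invokes at precisely this spot.
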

\begin{proof}
By Proposition~\ref{prop:2.6}, it suffices to handle the case in which $r<\rk\g$, i.e., $\rk S>0$. 

Let $\mathcal N_A(B)$ (resp. $\mathcal Z_A(B)$) denote the {\it normaliser} (resp. {\it centraliser}) of the 
group $B$ in $A$. Since $S$ is contained between a Levi subgroup of $G$ and its commutant, there is a 
connected reductive group $K\subset G$ such that $\mathcal N_{G}(S)^o=S^o{\cdot} K$ and 
$S^o\cap K$ is finite. Then $\rk K=\rk G-\rk S=r$. Since 
$\mathcal N_{G}(S)^o=\mathcal Z_G(S)^o{\cdot} S^o$, one also has 
$\mathcal  Z_G(S)^o=K{\cdot}Z(S^o)$, where $Z(S^o)$ is the centre of $S^o$. For Lie algebras, this
means that $\z_\g(\es)=\ka\oplus \z(\es)$.
 
As  $S\subset H$, we have $N_{H}(S)^{o}=S^o{\cdot}(K\cap H)^{o}$ and 
$K\cap H$ is reductive. The linear action $(K\cap H:\me^S)$ is the coisotropy representation of the affine 
homogeneous space $K/K\cap H$, and it follows from the construction that it's generic stabiliser is finite. 

By the Luna--Richardson theorem~\cite[Theorem\,4.2]{lr79}, the restriction homomorphism 
$\bbk[\me]\to\bbk[\me^S]$, $f\mapsto f\vert_{\me^S}$, induces an isomorphism 
$\bbk[\me]^H\isom\bbk[\me^S]^{N_H(S)}$. Hence 
$\bbk[\me^S]^{N_H(S)^0}=\bbk[\me^S]^{K\cap H}$ is a finite $\bbk[\me]^H$-module and 
$\fnme\cap \me^S=\fN_{K\cap H}(\me^S)$. It is also known that

{\bf --} \ $c_K(K/K\cap H)= c_G(G/H)= c$, see~\cite[1.9]{MAnn} and  
\\ \indent
{\bf --} \ $\dif\fnme\le \dif \fN_{K\cap H}(\me^S)$~\cite[Lemma\,3.4]{p95}.

\noindent It then follows from Theorem~\ref{thm:def-N-c} that $\dif \fN_{K\cap H}(\me^S)=c$. Therefore,
Proposition~\ref{prop:2.6} applies to $K/K\cap H$ and we conclude that $\ka\cap\h$ contains regular semisimple elements of $\ka$.  Let $\te_S$ be a Cartan subalgebra of $\es$. (Then $\te_S\subset\h$.)
Let $\te_K$ be a Cartan subalgebra of $\ka$ such that $\te_K\cap\h$ contains a regular semisimple element of $\ka$.
Since $K$ and $S^o$ commute and their intersection is finite, $\te_S\oplus\te_K=:\tilde\te$ is
a Cartan subalgebra of $\g$.
Let $x\in\te_S$ be a sufficiently general semisimple element of $\es$. Then
$\z_\g(x)=\z_\g(\te_S)\supset \z_\g(\es)=\ka\oplus \z(\es)$. Consequently, 
$\z_\g(\te_S)=\te_S+ \tilde \ka$ for a reductive Lie algebra $\tilde\ka \supset [\ka,\ka]$. However,
since $\es$ contains a commutant of a Levi subalgebra of $\g$, it is not hard to prove that  
$\tilde\ka = [\ka,\ka]$. In other words, 
\[
   [\z_\g(\te_S),\z_\g(\te_S)]=[\z_\g(\es),\z_\g(\es)]=[\ka,\ka] .
\]
Therefore, if $x\in\te_S$ is sufficiently general and $y\in\te_K\cap\h$ is regular in $\ka$, then
$x+y\in \h$ is a regular semisimple element of $\g$. 
\end{proof}

\begin{cl}
If $G/H$ is an affine spherical homogeneous space, then $H$ is $s$-regular.
\end{cl}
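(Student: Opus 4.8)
The plan is to obtain this as an immediate specialisation of Theorem~\ref{thm:s-reg}, so the only real task is to verify that the hypothesis $\dif\fnme=c$ holds. First I would unwind the terminology: for reductive $G$, the space $G/H$ is affine precisely when $H$ is reductive, and it is spherical precisely when $c=c_G(G/H)=0$. Thus both standing assumptions of Theorem~\ref{thm:s-reg} (affineness of $G/H$, and hence reductivity of $H$) are automatic, and it remains only to show that $\dif\fnme=0$.

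For that I would appeal to Theorem~\ref{thm:Cartan-sph} together with the discussion following it. When $c=0$, the generalised Cartan subspace $\ce\subset\me$ satisfies $\ov{H{\cdot}\ce}=\me$, and the restriction homomorphism induces an isomorphism $\bbk[\me]^H\isom\bbk[\ce]^W$; consequently the quotient morphism $\pi:\me\to\me\md H$ is equidimensional, with all fibres of dimension $\dim\me-r$. But equidimensionality of $\pi$ is exactly the assertion $\dif\fnme=0$. Combining this with $c=0$ yields $\dif\fnme=0=c$, and Theorem~\ref{thm:s-reg} then guarantees that $\h$ contains a regular semisimple element of $\g$, i.e.\ that $H$ is $s$-regular.

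I do not anticipate any genuine obstacle, since the corollary is precisely the case $c=0$ already signalled in the remark immediately following Theorem~\ref{thm:s-reg} (``In particular, this applies to the affine homogeneous spaces with $c=0$''), and it is consistent with Remark~\ref{rem:2-svoistva}, where both the equidimensionality equality and the maximality equality $\dim\fnme=c$ are observed to hold simultaneously when $c=0$. The single point requiring care is the direction of implication: one must use that sphericity \emph{forces} $\pi$ to be equidimensional, hence $\dif\fnme=c$, rather than attempting to deduce sphericity from equidimensionality.
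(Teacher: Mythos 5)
Your proposal is correct and follows the paper's (implicit) argument: the corollary is exactly Theorem~\ref{thm:s-reg} applied with $c=0$, once one observes that sphericity forces $\dif\fnme=0=c$. Whether one gets this equidimensionality from Theorem~\ref{thm:Cartan-sph}(3) as you do, or more directly from the two inequalities $0\le\dif\fnme\le c$ (the lower bound because $\fnme$ is a fibre of maximal dimension, the upper bound being Theorem~\ref{thm:def-N-c}), is immaterial.
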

\begin{rmk}   \label{rem:can-happen}
(1) If $G/H$ is quasiaffine but not affine, then the condition $c=0$ 
does not guarantee that $H$ is $s$-regular. For instance, take $H=U$.

(2) It can happen that $c=1$, $S$ is finite, and $H$ is $s$-regular, but  $\dif\fnme=0<1$. For instance, 
take $G=SO_{2n+1}$ and $H=SL_n$.
\end{rmk}

\section{On a class of affine homogeneous spaces}
\label{sect:new2}

\noindent
For a reductive subgroup $H\subset G$, we set $\tG=G\times H$ and 
$\tH=\Delta_H=H\times H\subset \tG$. Then 
$\tO=\tG/\tH$ is an affine homogeneous space of the reductive group $\tG$, which is isomorphic to $G$. 
The transitive $\tG$-action on $G$ is given by the formula $(g,h)\circ s=gs h^{-1}$, where $(g,h)\in\tG$ 
and $s\in G$. Then the space of the coisotropy representation of $\tH$ is
\[
    \tme=\tilde\h^\perp=\{(\xi,\eta)\in\g^*\times\h^* \mid \xi\vert_\h=-\eta\}.
\]
We shall identify the $\tH$-module $\tme$ with the $H$-module $\g^*$ via the projection to the first 
factor in $\tilde\g^*=\g^*\times\h^*$. If we use the isomorphisms $\g^*\simeq\g$ and $\h^*\simeq\h$, and
the sum $\g=\h\oplus\me=\h\oplus\h^\perp$, then 
\beq       \label{eq:tilde-coiso}
    \tme=(\me\times\{0\}) \oplus \{(x,-x)\mid x\in\h\} \subset \g\times \h .
\eeq

The multiplicities in the algebra $\bbk[\tG/\tH]$ are closely related to the multiplicities in the branching rule 
$G\downarrow H$. Let $\tilde\lb=(\lb,\mu)$ be a dominant weight of $\tilde G$, where $\lb\in\fX_+(G)$ 
and $\mu\in\fX_+(H)$. Write $\tilde\VV_{\tilde\lb}=\VV_\lb\otimes \WW_\mu$ for a simple 
$\tilde G$-module. Let $m_{\tilde\lb}(\tO)$ denote the multiplicity of $\tilde\VV_{\tilde\lb}$ in $\bbk[\tO]$, 
i.e., the multiplicity of $\tilde\lb$ in the rank monoid $\tilde\Gamma=\Gamma_{\tO}$. Then 
$m_{\tilde\lb}(\tO)$ equals the multiplicity of $\WW_\mu$ in $\VV_\lb^*\vert_H$, see~\cite{ap02}. It was 
also shown therein that if $c_{\tilde G}(\tilde O)\le 1$, then one can explicitly describe all the multiplicities 
and the branching rule.

Set $\tilde c=c_{\tilde G}(\tilde O)$, $\tilde r=r_{\tilde G}(\tilde O)$. By~\cite[Section\,3]{ap02} one has
$\tc=c_G(G/B_H)$ and   
\beq
         \tr=\rk \tilde G=\rk G+\rk H.        \label{eq:tr}
\eeq
If $H$ does not contain infinite normal subgroups of $G$ (in particular, $H$ is a proper subgroup of $G$), then there is a more practical formula
\beq     \label{eq:tc}
     \tc=\dim U-\dim B_H .
\eeq
Note that if $H=G$, then $\tO=G\times G/\Delta_G$ is a spherical homogeneous space of $G\times G$, i.e., $\tc=0$. That is, the constraint on $H$ is necessary for~\eqref{eq:tc} to be valid. Whenever we consider the homogeneous space $\tO=\tG/\tH$, it is also assumed that $H$ does not contain infinite normal subgroups of $G$ and thereby \eqref{eq:tc} holds.

We have here the quotient morphism $\tilde\pi:\tilde\me\simeq\g\to \g\md H\simeq \tme\md\tH$ 
and the nullcone $\tilde\pi^{-1}(\tilde\pi(0))=\fN_{\tilde H}(\tilde \me)\simeq\fN_H(\g)$.
It follows from \eqref{eq:tr} and \eqref{eq:tc} that 
\beq          \label{eq:dim-g/H}
       \dim\g\md H=2\tilde c+\tilde r=\dim\g-\dim\h=\dim\me .
\eeq
\begin{prop}   \label{lm:null-equi}  
All irreducible components of the nullcone $\fN_H(\g)$ have the same dimension, which is equal 
to \  $\dim U_H+\frac{1}{2}(\dim\g-\dim\g^{T_H})$.
\end{prop}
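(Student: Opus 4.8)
The plan is to reduce the statement to the restricted adjoint action $(H:\g)$, bound $\dim\fhg$ from above by the general estimate~\eqref{eq:gerry}, match this bound from below by an explicit irreducible stratum coming from a regular one-parameter subgroup of $T_H$, and finally obtain the equidimensionality from Richardson's analysis of exactly this nullcone.

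First I would record the identifications. By~\eqref{eq:tilde-coiso} the $\tH$-module $\tme$ is identified with the $H$-module $\g$, on which $\tH=\Delta_H$ acts by the restricted adjoint action; hence $\fN_{\tH}(\tme)\simeq\fhg$. Since the form $\Psi$ restricts to a nondegenerate $H$-invariant form, $(H:\g)$ is self-dual, so that $\fhg=\{v\in\g\mid \ov{H{\cdot}v}\ni 0\}$ and the bound~\eqref{eq:gerry} is applicable. Under $\Delta_H\simeq H$ one has $U_{\tH}\simeq U_H$, $T_{\tH}\simeq T_H$, and $\tme^{T_{\tH}}\simeq\g^{T_H}$. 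Reading off the first form of~\eqref{eq:gerry} for the affine space $\tO=\tG/\tH$ and substituting these identifications gives at once
\[
   \dim\fhg=\dim\fN_{\tH}(\tme)\le \dim U_H+\tfrac12(\dim\g-\dim\g^{T_H}),
\]
which is the number in the statement. This disposes of the upper bound.

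For the lower bound I would use the Hilbert--Mumford criterion. As $H$ is reductive, $\fhg=\bigcup_\lambda H{\cdot}\g^\lambda_{>0}$, the union over $\lambda\in X_*(T_H)$ of the $H$-saturations of the sums $\g^\lambda_{>0}$ of strictly positive $\lambda$-weight spaces of $\g$. Choosing $\lambda$ regular (pairing nontrivially with every root of $H$ and with every nonzero $T_H$-weight of $\g$) gives $P(\lambda)=B_H$, and by the $\pm$-symmetry of the $T_H$-weights of the self-dual module $\g$ one gets $\dim\g^\lambda_{>0}=\tfrac12(\dim\g-\dim\g^{T_H})$ and $\g_0^\lambda=\g^{T_H}$. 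The collapsing map $H\times_{B_H}\g^\lambda_{>0}\to\ov{H{\cdot}\g^\lambda_{>0}}\subset\fhg$ has irreducible source of dimension $\dim(H/B_H)+\dim\g^\lambda_{>0}=\dim U_H+\tfrac12(\dim\g-\dim\g^{T_H})$; a transversality computation (generic finiteness, i.e. $H{\cdot}w\cap\g^\lambda_{>0}=B_H{\cdot}w$ for generic $w\in\g^\lambda_{>0}$) shows that the image has the same dimension. Combined with the upper bound this already yields $\dim\fhg=\dim U_H+\tfrac12(\dim\g-\dim\g^{T_H})$.

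The hard part will be equidimensionality: nothing above excludes irreducible components of strictly smaller dimension arising from non-regular cocharacters, and this is the genuine content of the proposition. Here I would appeal to Richardson's theorem~\cite{r89}, which treats precisely the nullcone of the restricted adjoint action of a reductive subgroup and asserts that all its irreducible components share the common dimension computed above; this is exactly the point where the self-dual (orthogonal) structure of $(H:\g)$ is used in an essential way. Putting the dimension count of the preceding step together with Richardson's equidimensionality gives the assertion.
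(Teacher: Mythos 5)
Your proposal ultimately rests on the same pillar as the paper, namely Richardson's Theorem~7.3 in \cite{r89}, but the way you invoke it contains the real gap. That theorem is \emph{not} a statement "precisely about the nullcone of the restricted adjoint action of a reductive subgroup"; it is a general theorem about the nullcone of an $H$-module $V$, valid only under two hypotheses: (C.1), all roots of $\h$ occur among the $T_H$-weights of $V$, and (C.3), $V$ is self-dual (its weight system is symmetric). Verifying these two conditions for $V=\g$ is the \emph{entire} content of the paper's proof: (C.1) holds because $\g=\h\oplus\me$ contains $\h$ as an $H$-submodule, and (C.3) holds because $\Psi$ restricts to a nondegenerate $H$-invariant form on $\g$. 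You do flag self-duality, but condition (C.1) never appears in your argument, and without it the theorem you cite does not apply as stated. Note also that once (C.1) and (C.3) are checked, Theorem~7.3 delivers the common dimension \emph{and} equidimensionality in one stroke, so your upper-bound (via \eqref{eq:gerry}) and lower-bound computations are redundant rather than a genuinely different route.

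Inside that redundant part there is a second, unproven step: the "transversality computation" asserting $H{\cdot}w\cap\g^\lambda_{>0}=B_H{\cdot}w$ for generic $w$, i.e.\ generic finiteness of the collapsing $H\times_{B_H}\g^\lambda_{>0}\to\ov{H{\cdot}\g^\lambda_{>0}}$. This is not automatic for an arbitrary self-dual module, and its natural justification here is, tellingly, condition (C.1) again: since $\h\cap\g^\lambda_{>0}=\ut_H$ and the decomposition $\g=\h\oplus\me$ is $H$-stable, a generic $w\in\g^\lambda_{>0}$ has $\h$-component $w_\h$ that is a regular nilpotent element of $\h$; if $\Ad(h)w\in\g^\lambda_{>0}$, then $\Ad(h)w_\h\in\ut_H$, and since a regular nilpotent element lies in a unique Borel subalgebra, this forces $\Ad(h)\be_H=\be_H$, i.e.\ $h\in N_H(B_H)$, which contains $B_H$ with finite index. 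So both defects of the proposal trace back to the same omission — the fact that $\h$ is an $H$-module summand of $\g$ — which is exactly half of what must be (and in the paper is) verified before Richardson's theorem can be cited.
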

\begin{proof}
Since the $H$-module $\g$ contains $\h$ as a summand, all roots of $\h$ occur as $H$-weights in $\g$.
Moreover, $\g$ is a self-dual $H$-module. Therefore, $\g$ satisfies the conditions (C.1) and (C.3)
considered by Richardson in~\cite{r89}, and his Theorem~7.3 applies here.
\end{proof}
\noindent
The point of this result is that the upper bound on dimension of the nullcone given in~\cite{g78b}, 
cf.~Eq.~\eqref{eq:gerry}, provides now the exact value. Properties of $\tG/\tH$ are better than those of 
arbitrary affine homogeneous spaces $G/H$, because $\tr_{\tG}(\tG/\tH)=\rk\tG$ and $\dim\fN_H(\g)$ is 
known. Preceding formulae also show that  
\[
     \dif\fN_H(\g)=\frac{1}{2}(\dim\g-\dim\g^{T_H})-\dim B_H\le \dim U-\dim B_H=\tc ,
\]
which illustrates to the easy part of Theorem~\ref{thm:def-N-c}.
Since $\g^{T_H}$ contains a Cartan subalgebra of $\g$ and $\dim \fN_H(\g)\ge\dim\g-\dim\g\md H=\dim H$, Proposition~\ref{lm:null-equi} implies that 
\beq          \label{eq:2-ineq}
   \textstyle \dim B_H\le \frac{1}{2}(\dim\g-\dim\g^{T_H})\le  \dim U.
\eeq
Here the equality in the first place is equivalent to that $\dim\fN_H(\g)=\dim H$, i.e., $\tilde\pi$ is 
equidimensional. Whereas the equality in the second place is equivalent to that $\g^{T_H}$ is a Cartan 
subalgebra of $\g$, i.e., $H$ is $s$-{regular}. It is easily seen that
{$H$ is $s$-regular in $G$ if and only if $\tH$ is $s$-regular in $\tG$.}

Comparing equations~\eqref{eq:tc} and \eqref{eq:2-ineq} shows that 
\begin{itemize}
\item if \ $\tc=0$, then $H$ is $s$-regular and $\tilde\pi$ is equidimensional;  
\item for $\tc=1$, exactly one of these two properties is satisfied.
\end{itemize}
However, for homogeneous spaces $\tG/\tH$ and the isotropy representation $(H:\g)$,
there is a more precise assertion for any $\tc>0$.

\begin{thm}    
\label{thm:no-EQ}
Suppose that $\g$ is simple, $\tilde c>0$, and\/ $\h\ne 0$. Then the first inequality in~\eqref{eq:2-ineq} 
is always strict, i.e., $\tilde\pi$ cannot be equidimensional. In particular, if \ $\tc=1$, 
then $H$ is $s$-regular and $\dif\fN_H(\g)=1$.
\end{thm}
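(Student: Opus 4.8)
The plan is to turn the assertion into a single numerical inequality about the Levi subalgebra $\el_0:=\z_\g(\te_H)=\g^{T_H}$, and then to rule out the boundary case by hand. Combining the defect formula stated just before the theorem, $\dif\fhg=\tfrac12(\dim\g-\dim\g^{T_H})-\dim B_H$, with $\tc=\dim U-\dim B_H$ and the observation that $\dim\g^{T_H}-\rk\g$ is the number of roots of $\g$ vanishing on $\te_H$, i.e. $2\dim U_{\el_0}$, one obtains the clean identity $\dif\fhg=\tc-\dim U_{\el_0}$. Since the nullcone is a fibre of maximal dimension, $\dif\fhg\ge0$, so $\dim U_{\el_0}\le\tc$ is automatic, and the first inequality in \eqref{eq:2-ineq} is strict exactly when $\dim U_{\el_0}<\tc$. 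If $H$ is $s$-regular then $\el_0=\te$, $\dim U_{\el_0}=0$, and the inequality is just $\tc>0$; so the entire content lies in the case $\el_0\ne\te$, where I must exclude the equality $\dim U_{\el_0}=\tc$, equivalently $\dif\fhg=0$.

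Assume then that $\tilde\pi$ is equidimensional ($\dif\fhg=0$) while $\el_0\ne\te$. First I would convert $\dif\fhg=0$ into a rigidity statement. Decomposing $\g=\h\oplus\me$ into $T_H$-weight spaces and using $\h^{T_H}=\te_H$, equidimensionality becomes $\dim\me-\dim\me^{T_H}=2\rk H$, i.e. $\me$ has exactly $2\rk H$ nonzero $T_H$-weights counted with multiplicity. Because $\g$ is simple, $\h$ acts faithfully on $\g$, so these weights span $\te_H^*$; and since $\me=\h^\perp$ is self-dual under the Killing form they are symmetric under sign. Hence they form $\rk H$ lines, and the count $2\rk H$ forces $\dim\me_{[\chi]}\le1$ for every nonzero weight $\chi$. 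Consequently $\dim\g_{[\chi]}\le 1+\dim\h_{[\chi]}$ for all $\chi\ne0$: at most two roots of $\g$ restrict to a given nonzero $T_H$-weight, and two do so only when that weight is a root of $H$.

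Next I would use simplicity of $\g$. As $\el_0\ne\te$ it has a simple root $\delta$, and since the Dynkin diagram of $\g$ is connected, for compatible positive systems $\delta$ is adjacent to some simple root $\beta$ of $\g$ lying outside $\el_0$. Then $\g_\beta$ and $\g_{\beta+\delta}$ share the same nonzero $T_H$-weight $\bar\beta$. If the $\delta$-string through $\beta$ were longer than two, or if $\bar\beta$ were not a root of $H$, then $\me_{[\bar\beta]}$ would have dimension $\ge2$, contradicting the rigidity above. Therefore the string is $\{\beta,\beta+\delta\}$, the weight $\bar\beta$ is a root of $H$, and $\g_{[\bar\beta]}=\g_\beta\oplus\g_{\beta+\delta}$ with a one–dimensional $\h$-part and a one–dimensional $\me$-part.

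The concluding step is the bracket computation. Since $\delta$ vanishes on $\te_H$, the vectors $e_{\pm\delta}$ lie in $[\el_0,\el_0]\subseteq\me$ (as $[\el_0,\el_0]\cap\h\subseteq[\el_0,\el_0]\cap\z(\el_0)=0$), and $\me$ is an $\h$-module, so $[\h,e_{\pm\delta}]\subseteq\me$. Writing the root space $\h_{\bar\beta}$ as a line in $\g_\beta\oplus\g_{\beta+\delta}$ and bracketing it with $e_\delta$ and $e_{-\delta}$, the constraint $\dim\me_{[\bar\beta]}=1$ forces $\h_{\bar\beta}=\g_\beta$; the symmetric computation gives $\h_{-\bar\beta}=\g_{-\beta}$. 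Hence $\beta^\vee\in[\h,\h]\cap\te=\te_H$, so $\langle\delta,\beta^\vee\rangle=\delta(\beta^\vee)=0$, which is impossible because $\beta$ and $\delta$ are adjacent. This contradiction yields $\dif\fhg>0$, i.e. the first inequality in \eqref{eq:2-ineq} is strict; the ``in particular'' assertion for $\tc=1$ then follows from the dichotomy recorded before the theorem. I expect the two genuinely delicate points to be the rigidity step (extracting $\dim\me_{[\chi]}\le1$ from equidimensionality) and the final bracket computation; everything else is bookkeeping with the identity $\dif\fhg=\tc-\dim U_{\el_0}$.
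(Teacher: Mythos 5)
Your route is genuinely different from the paper's: the paper reduces to the maximal torus via a slice argument, invokes Wehlau's classification of equidimensional torus representations to get the multiplicity bounds, and then runs a type-dependent analysis (root strings, dual Coxeter numbers, the series $\GR{A}{n}$, $\GR{C}{n}$). Your elementary counting — exactly $2\rk\h$ nonzero $T_H$-weights on $\me$, symmetric under sign and spanning $\te_H^*$, hence $\rk\h$ lines each carrying one $\pm$-pair of multiplicity-one weights — recovers the paper's conditions ($\lozenge_1$), ($\lozenge_2$) without Wehlau, and your local argument with an adjacent pair $\delta\in\el_0$, $\beta\notin\el_0$ replaces the case analysis by a uniform contradiction. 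The skeleton, including the identity $\dif\fN_H(\g)=\tc-\dim U_{\el_0}$, is correct. However, two of your steps are wrong as written, and one of them is the crux of the proof.

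First, $[\el_0,\el_0]\cap\h=0$ does \emph{not} imply $[\el_0,\el_0]\subseteq\me$: a subspace meeting $\h$ trivially need not lie in the complementary $H$-submodule (it could sit diagonally across $\te_H\oplus\me^{T_H}$). The containment is true, but the proof must use the invariant form: inside $\el_0=\g^{T_H}$ one has $\me\cap\el_0=\{x\in\el_0\mid \Psi(x,\te_H)=0\}$, because weight orthogonality already kills the pairing of $\el_0$ with every root space $\h_\chi$, $\chi\ne 0$; and $[\el_0,\el_0]=\z(\el_0)^{\perp}\cap \el_0\subseteq \te_H^{\perp}\cap\el_0$ since $\te_H\subseteq\z(\el_0)$. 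A related (minor) repair: faithfulness of $\h$ on $\g$ only says no $t\in\te_H$ kills all of $\g$; you need that no nonzero $t\in\te_H$ kills $\me$. This holds because $[\h,\me]\subseteq\me$ makes the subalgebra generated by $\me$ an ideal of $\g$, hence equal to $\g$ by simplicity, so such a $t$ would be central in $\g$.

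Second, and more seriously, your bracket computation does not force $\h_{\bar\beta}=\g_\beta$. Writing $\h_{\bar\beta}=\bbk(a e_\beta+b e_{\beta+\delta})$, the conditions $[\h_{\bar\beta},e_{\pm\delta}]\subseteq\me$ only exclude $a\ne0\ne b$ (which would give $\dim\me_{[\bar\beta]}\ge 2$); the alternative $\h_{\bar\beta}=\g_{\beta+\delta}$, $\me_{[\bar\beta]}=\g_\beta$ survives and is consistent with everything derived up to that point, so your final contradiction $\langle\delta,\beta^\vee\rangle=0$ does not apply to it. This missing case must be excluded separately, and it can be: if $\h_{\bar\beta}=\g_{\beta+\delta}$, then $\h_{-\bar\beta}=\g_{-\beta-\delta}$ (otherwise $[\h_{\bar\beta},\h_{-\bar\beta}]$ would be a nonzero multiple of $e_{\delta}$, which cannot lie in $\h\cap\g^{T_H}=\te_H$), hence $(\beta+\delta)^\vee\in\te_H$ and $\delta$ must vanish on it; but $\langle\delta,(\beta+\delta)^\vee\rangle=0$ forces $(\delta,\beta)=-(\delta,\delta)$, i.e. $\langle\beta,\delta^\vee\rangle=-2$, so $\beta+2\delta$ is a root, contradicting the string length $2$ you established from rigidity. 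With these two repairs the argument goes through and yields a proof that is arguably cleaner than the paper's, but as submitted the key concluding step is incomplete.
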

\begin{proof} Without loss of generality, we may assume that $H$ is connected. Let $x\in\h$ be a  
semisimple element such that $H^x=T_H$. The orbit $H{\cdot}x\subset\g$ is closed and, since 
$\g=\h\oplus\me$, the slice representation at $x$ equals $(T_H:\te_H\oplus\me)$. Here $\te_H$ is a
trivial $T_H$-module. The property of being equidimensional is inheritable, see~\cite[\S\,8.2]{vp89}.
Therefore, if $(H:\g)$ is equidimensional, then so are $(T_H:\te_H\oplus\me)$ and $(T_H:\me)$.

Assume that  $\tilde\pi$ is equidimensional, i.e., $\dim B_H=\frac{1}{2}(\dim\g-\dim\g^{T_H})$. Then we 
have $\dim U=\frac{1}{2}(\dim\g-\dim\g^{T_H})+\tc$ and hence 
\beq      \label{dim-T-fixed}
         \dim\g^{T_H}=\rk \g+2\tc .
\eeq
Take a 1-parameter subgroup $\lb:\bbk^\times\to T_H$ such that $\me^{T_H}=\me^{\lb(\bbk^\times)}$. 
Then
\[
       \me=\me^+\oplus \me^{T_H}\oplus \me^- ,
\]
where $\me^+$ (resp. $\me^-$) is the sum of weight spaces $\me_\nu$ such that $(\lb,\nu)>0$
(resp. $(\lb,\nu)<0$). Since $\me$ is a self-dual $H$-module,  
the $T_H$-weights in $\me^+$ and $\me^-$ are opposite to each other. As 
$\g^{T_H}=\te_H\oplus \me^{T_H}$, it follows from~\eqref{dim-T-fixed} that
$\dim \me^{T_H}=\rk\g-\rk\h+2\tc$. Then using~\eqref{eq:tr} and \eqref{eq:dim-g/H}, we obtain 
$\dim\me=\rk\g+\rk\h+2\tc$ and $\dim\me^+=\dim\me^-=\rk\h$.

The equidimensional representations of tori are described by Wehlau~\cite{w92}. For the self-dual 
representations, his description implies that the nonzero weights in $\me^+$ are linearly independent. 
Therefore, the nonzero $T_H$-weights in $\me^+$ (and in $\me^-$) are of multiplicity~1. As the same is true for the $T_H$-weights in $\h$, we obtain the following conditions:
\begin{itemize}
\item[($\lozenge_1$)] \ the multiplicity of any nonzero $T_H$-weight in $\g$ is \ $\le 2$;
\item[($\lozenge_2$)] \  the number of weights with multiplicity $2$ is at most $2\rk\h=
\dim(\me^+\oplus\me^-)$.
\end{itemize}

\noindent  
Let us prove that ($\lozenge_1$) and ($\lozenge_2$) cannot be satisfied if  
$\h\ne\{0\}$. By~\eqref{dim-T-fixed}, $\el=\g^{T_H}$ is not abelian. Without loss of generality, we 
may assume that $\el$ is a standard Levi subalgebra w.r.t. $T\subset B$, i.e., $\el$ is determined by the 
set of simple roots $\ap$ such that $\ap\vert_{\te_H}=0$.

(a) Assume that $[\el,\el]$ has a simple factor of rank $\ge 2$. Then there is a chain of 
simple roots $\ap_1,\ap_2,\beta$ in the Dynkin diagram of $\g$ such that $\ap_i\vert_{\te_H}=0$ ($i=1,2$)
and $\beta\vert_{\te_H}\ne 0$.
Then $\beta, \beta+\ap_2, \beta+\ap_2+\ap_1$ have the same (nonzero) restriction to $\te_H$, which contradicts ($\lozenge_1$).

(b)  Assume that $[\el,\el]\simeq k\GR{A}{1}$ and $k\ge 2$.  Take simple roots $\ap_1,\ap_2$ in $[\el,\el]$
such that the simple roots between them, say $\beta_1,\dots,\beta_r$, do not belong to $[\el,\el]$. If 
$\beta=\sum_{i=1}^r\beta_i$, then the roots $\beta,\beta+\ap_1, \beta+\ap_2$ yield a 
$T_H$-weight of multiplicity $\ge 3$, which again contradicts ($\lozenge_1$).

(c)  Assume that $[\el,\el]\simeq \GR{A}{1}$. Then $\tc=1$, generic elements of $\te_H$ are 
subregular in $\g$, and there is a unique root $\ap\in\Pi$ such that $\ap\vert_{\te_H}=0$. 
Each pair of roots of the form $\{\mu, \mu+\ap\}$ gives rise to a $T_H$-weight in $\g$ of multiplicity $2$. 

$\bullet$ \ 
If there are roots of different length and $\ap$ is short, then one can find a triple of roots
$\mu, \mu+\ap,\mu+2\ap$, which again provides a $T_H$-weight of multiplicity $\ge 3$.

$\bullet$ \ 
For $\ap$ long, the number of pairs of {\bf positive} roots $\{\mu, \mu+\ap\}$
equals $\bh^*{-}2$, where $\bh^*$ is the {\it dual Coxeter number\/} of $\g$, see~\cite[Section\,1]{p06}. 
Then the total number of such pairs equals $2(\bh^*-2)$ and~($\lozenge_2$) means that  
$2\rk\h\ge 2(\bh^*-2)$.  Since $\bh^*-1\ge\rk\g$, one must have
\[
       \bh^*-2\le \rk\h<\rk\g\le \bh^*-1 .
\]
Hence $\rk\g=\bh^*-1$ and $\rk\h=\rk\g-1$. 
The equality $\rk\g=\bh^*-1$ holds only for $\GR{A}{n}$ and $\GR{C}{n}$, and we look more carefully at these two series.  Since $\tc=1$ and $\rk\h=\rk\g-1$, we obtain using~\eqref{eq:dim-g/H} that 
\[
   \dim \h=\dim\g-\rk\g-\rk\h-2\tc=\dim\g-2\rk\g-1 .
\]
\begin{description}
\item[$\g=\slno$]  Here $\dim\h=n^2-1$ and hence $\h=\sln$ is the only possibility. But the subgroup
$SL_n\subset SL_{n+1}$ is $s$-regular, if $n\ge 2$, i.e., if $\h\ne 0$.
\item[$\g=\spn$]  Here $\dim\h=2n^2-n-1 > \dim \mathfrak{sp}_{2n-2}$, and this case is also impossible. 
\end{description}
Thus, the assumption that $\tilde\pi$ is equidimensional 
leads to a contradiction.
\end{proof}

\begin{rmk}        \label{rem:specific}
This result is specific for homogeneous spaces of the form $(G\times H)/\Delta_H$. For arbitrary 
affine homogeneous spaces $G/H$, it can happen that $c=1$, but $\pi:\me\to \me\md H$ is 
equidimensional and $H$ is not $s$-regular. For instance, take $(G,H)=(Sp_{2n}, Sp_{2n-2})$ or
$(SO_{2n+1}, SO_{2n-1})$ with $n\ge 2$.
\end{rmk}

\subsection{More on the nullcone for $\tilde\pi$}
For $\tG=G\times H$ and $\tG/\tH\simeq G$, we have 
\\[.4ex]
{\phantom{a}\hfil $\fN_{\tH}(\tilde\me)\subset \fN_{\tG}(\tilde\g)\cap\tilde\me$
\ and \ $\fN_{\tG}(\tilde\g)=\fN_G(\g)\times\fN_H(\h)\subset \g\times\h$. \hfil} 

Using~\eqref{eq:tilde-coiso}, one readily verifies that under the isomorphism $\tme\simeq\g$ the variety  
$\fN_{\tG}(\tilde\g)\cap\tilde\me$ is identified with $\fN_G(\g)\cap\bigl(\fN_H(\h)\times\me\bigr) \subset \g$. 
Since $\tr=\rk\tilde\g$, translating Theorem~\ref{thm:peresechenie}, Theorem~\ref{thm:null-c}, and 
Proposition~\ref{prop:granitsy} into this setting, we obtain

\begin{thm}  \label{thm:tilde-null}
If \ $\tc=0$, then $\fN_H(\g)=\fN_G(\g)\cap(\fN_H(\h)\times\me)$.
For arbitrary $\tc\ge 0 $, we have 
\begin{itemize}
\item[\textbullet]  \ $\dim\fN_H(\g)\le \dim\h+\tc=\dim \tilde U=\dim U+\dim U_H$;  
\item[\textbullet] \  $\dim(\fN_G(\g)\cap(\fN_H(\h)\times\me))= \dim\h+2\tc=\dim\g-\rk\g-\rk\h$;
\item[\textbullet] \ the intersection\/ $\fN_G(\g)\cap(\fN_H(\h)\times\me)$ is proper.
\end{itemize}
Moreover, if\/ $\dim\fN_H(\g)= \dim\h+\tc$, i.e., $\dif\fN_H(\g)=\tc$, then $H$ is $s$-regular in $G$.
\end{thm}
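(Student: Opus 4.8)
The plan is to translate the statements of Theorem~\ref{thm:peresechenie}, Theorem~\ref{thm:null-c}, and Proposition~\ref{prop:granitsy} (together with Theorem~\ref{thm:s-reg}) from the setting of the homogeneous space $G/H$ with its coisotropy representation to the setting of $\tG/\tH \simeq G$ with the isotropy representation $(H:\g)$. Recall from the discussion preceding the theorem that $\tme \simeq \g$ as an $H$-module, that $\fN_{\tH}(\tme) \simeq \fN_H(\g)$, and that $\fngs$ for $\tG$ acting on $\tilde\g^*$ is $\fN_{\tG}(\tilde\g) = \fN_G(\g)\times \fN_H(\h)$ under the identifications $\tilde\g^* \simeq \g\times\h$. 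The key numerical inputs are $\tr = \rk\tG = \rk\g + \rk\h$ from~\eqref{eq:tr} and $\dim\me = \dim\g - \dim\h = 2\tc + \tr$ from~\eqref{eq:dim-g/H}.

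\begin{proof}[Proof of Theorem~\ref{thm:tilde-null}]
First I would dispose of the case $\tc=0$. Since $\tG/\tH$ is affine and $\tc=0$, part {\sf (ii)} of Theorem~\ref{thm:null-c} (equivalently Proposition~\ref{prop:granitsy}{\sf (ii)}) gives $\fN_{\tH}(\tme)=\tme\cap\fN_{\tG}(\tilde\g)$. Transporting this equality through the identification $\tme\simeq\g$, and using $\fN_{\tG}(\tilde\g)=\fN_G(\g)\times\fN_H(\h)$ together with the remark (established from~\eqref{eq:tilde-coiso}) that $\fN_{\tG}(\tilde\g)\cap\tme$ corresponds to $\fN_G(\g)\cap(\fN_H(\h)\times\me)$, I obtain exactly $\fN_H(\g)=\fN_G(\g)\cap(\fN_H(\h)\times\me)$.

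For the three bulleted items, valid for all $\tc\ge 0$: the first dimension bound is the statement that $\dim\fN_{\tH}(\tme)\le \dim\tme - \dim\tme\md\tH + \tc$, i.e.\ $\dif\fN_{\tH}(\tme)\le\tc$, which is Theorem~\ref{thm:def-N-c} applied to the reductive group $\tH$; unwinding the dimensions via $\dim\tme\md\tH = 2\tc+\tr$ and $\dim\tme = \dim\g$ yields $\dim\fN_H(\g)\le\dim\h+\tc$, and $\dim\h+\tc = \dim U_H + (\dim U - \dim B_H + \dim U_H)$ collapses to $\dim U + \dim U_H = \dim\tilde U$ using~\eqref{eq:tc}. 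The second item is Theorem~\ref{thm:peresechenie}(1) for $\tG/\tH$: $\dim\bigl(\tme\cap\fN_{\tG}(\tilde\g)\bigr)=\dim\tme-\tr$; substituting $\dim\tme=2\tc+\tr$ gives $\dim\tme-\tr = 2\tc+\tr-\tr+\dim\h$—more cleanly, $\dim\tme - \tr = (\dim\g - \dim\h)-(\rk\g+\rk\h)$ reorganises to $\dim\h + 2\tc$ after using $\dim\g - \dim\h = 2\tc+\tr$. The third item, that the intersection is proper, is Theorem~\ref{thm:peresechenie}(2), since $\tr = \rk\tG$ holds by~\eqref{eq:tr}.

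For the final clause, the equality $\dim\fN_H(\g)=\dim\h+\tc$ is precisely $\dif\fN_{\tH}(\tme)=\tc$, so Theorem~\ref{thm:s-reg} applied to the affine homogeneous space $\tG/\tH$ yields that $\tH$ is $s$-regular in $\tG$; by the observation recorded before the theorem, $\tH$ is $s$-regular in $\tG$ if and only if $H$ is $s$-regular in $G$, which completes the proof.
\end{proof}

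I expect the only real subtlety to lie in the bookkeeping of the second paragraph: one must track the three distinct combinatorial identities $\dim U_H=\dim B_H-\rk\h$, $\dim\g-\dim\h=2\tc+\tr$, and $\tc=\dim U-\dim B_H$ simultaneously so that the two stated closed forms ($\dim\tilde U$ in the first bullet, $\dim\g-\rk\g-\rk\h$ in the second) emerge without sign or rank errors. Everything else is a direct application of results already established for the affine homogeneous space $\tG/\tH$, so there is no genuine mathematical obstacle—only the care required to keep the identifications $\tme\simeq\g$ and $\fN_{\tG}(\tilde\g)\simeq\fN_G(\g)\times\fN_H(\h)$ consistent throughout.
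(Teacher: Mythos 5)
Your proposal is correct and is essentially the paper's own proof: the paper obtains the theorem exactly this way, by using \eqref{eq:tilde-coiso} to identify $\fN_{\tG}(\tilde\g)\cap\tme$ with $\fN_G(\g)\cap\bigl(\fN_H(\h)\times\me\bigr)$ and then translating Theorem~\ref{thm:peresechenie}, Theorem~\ref{thm:null-c}, and Proposition~\ref{prop:granitsy} (together with Theorem~\ref{thm:s-reg} and the noted equivalence of $s$-regularity for $H\subset G$ and $\tH\subset\tG$) to the space $\tG/\tH$. One small caveat: in your second bullet the intermediate substitutions misstate $\dim\tme$ (it equals $\dim\g=\dim\h+2\tc+\tr$, not $2\tc+\tr$, and likewise $\dim\tme-\tr\ne(\dim\g-\dim\h)-(\rk\g+\rk\h)$), although the conclusion you extract, $\dim\tme-\tr=\dim\g-\rk\g-\rk\h=\dim\h+2\tc$, is the correct one.
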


\subsection{Homogeneous spaces $\tG/\tH$ of complexity $\le 1$} 
\leavevmode

\noindent
The pairs $(G,H)$ such that $c_{\tG}(\tG/\tH)=0$ can be characterised by a number of equivalent properties. An extensive list of such properties is given and discussed in~\cite[Section\,3.2]{kot-kruks}.

In particular, $(G, H)$ is a {\it strong Gelfand pair}, which means that any simple $G$-module $\VV_\lb$
is a multiplicity free $H$-module. A classification of strong Gelfand pairs (in the category of compact Lie groups) is obtained by Manfred Kr\"amer in \cite{kr76}. 

If $G$ is simple, then the (very short) list of strong Gelfand pairs consists of two series:
\\[.6ex]
\centerline{$(\sln,\mathfrak{gl}_{n-1})$, $n\ge 2$, and $(\son,\mathfrak{so}_{n-1})$, $n\ge 5$.}

\noindent
See also comments in~\cite[Section\,4]{ap02} and other details in~\cite[Section\,3.2]{kot-kruks}.

\begin{rmk}   \label{rem:Andrus}
For a symmetric variety $G/G_0$ with simple $G$, it is proved in~\cite{andr} that 
\\[.6ex]
\centerline{$\tilde\pi:\g\to \g\md G_0$ is equidimensional $\Longleftrightarrow$ $(\g,\g_0)$ is either $(\sln,\mathfrak{gl}_{n-1})$ or $(\son,\mathfrak{so}_{n-1})$.}

\noindent
Since these two series gives rise to the only spherical homogeneous spaces $\tG/\tH$, our 
Theorem~\ref{thm:no-EQ} generalises that result of~\cite{andr}.
\end{rmk}

The list of pairs $(G,H)$ such that $G$ is simple, $H$ is connected, and $\tc=1$ is obtained 
in~\cite[Section\,4]{ap02}. For the reader convenience, we recall it in Table~\ref{tc=1}.

\begin{table}[ht]
\caption{The pairs $(G,H)$ with simple $G$ and $\tc=1$}   \label{tc=1}
\begin{center}
\begin{tabular}{c|| >{$}c<{$}>{$}c<{$}>{$}c<{$}>{$}c<{$}>{$}c<{$}>{$}c<{$}>{$}c<{$}>{$}c<{$}|}
 {\rus N0} & 1&2&3&4&5&6&7&8 \\ \hline
 $G$ & SL_{n+1} & Sp_6             & Spin_7      & \GR{G}{2} & SL_3 & SL_3 & Sp_4 & SL_4 \\
 $H$ & SL_n & Sp_4{\times} SL_2 & \GR{G}{2} & SL_3 & SO_3 & T  & SL_2{\cdot}\bbk^\times & (SL_2)^2{\cdot}\bbk^\times \\  \hline
\end{tabular}
\end{center}
\end{table}

\noindent
For {\rus N0}\,6, $T=\mathsf T_2$ is a maximal torus of $SL_3$, and
{\rus N0}\,7 represents actually two different pairs. Here $H$ is a Levi subgroup of $Sp_4$, and there are two non-conjugate Levi subgroups corresponding to either the {\bf long} or the {\bf short} simple root of $Sp_4$. In Section~\ref{sect:defect}, these two cases will be referred to as 7($l$) and 7($s$), respectively.

\section{The defect of the null-cone and invariants}
\label{sect:defect}

Let $G\to GL(\VV)$ be a linear representation of a connected reductive group $G$. In~\cite{po76}, 
V.\,Popov conjectured that if $G$ is semisimple and $\pi:\VV\to \VV\md G$ is equidimensional, then 
$\VV\md G$ is an affine space, i.e, $\bbk[\VV]^G$ is a polynomial algebra.
Afterwards, this conjecture was extended to arbitrary connected reductive groups.
Using our terminology, the conjecture can be stated as follows:

\emph{ If $G$ is a connected reductive group and \ $\dif\ngv=0$, then $\VV\md G$ is an 
affine space.}

\noindent
There had been a good few classification work related to this conjecture. To the best of my knowledge,
it is verified in the following cases:
\begin{itemize}
\item \ $G$ is simple, $\VV$ is irreducible (V.L.\,Popov, 1976~\cite{po76});
\item \ $G$ is simple, $\VV$ is reducible  (O.M.\,Adamovich, 1980);
\item \ $G$ is semisimple, $\VV$ is irreducible (P.\,Littelmann, 1989);
\item \ $G$ is a torus (E.B.~Vinberg (oral lecture at MSU) 1983; D.\,Wehlau, 1992~\cite{w92}); 
\item \ $G$ is a product of two simple factors, with some exceptions  (D.\,Wehlau, 1993~\cite{w93}).
\end{itemize}
\noindent
An interesting approach to an {\sl a priori\/} proof of the Popov conjecture is presented in~\cite{v82}.
More information on this conjecture and other references can be found in \cite[\S\,8.7]{vp89}.

Some time ago, I stated a similar conjecture on non-equidimensional representations. Let $\ed\VV\md G$
denote the {\it embedding dimension\/} of $\VV\md G$, i.e., the minimal number of generators of 
$\bbk[\VV]^G$. Then $\hd \VV\md G:=\ed\VV\md G- \dim\VV\md G$ is the {\it homological dimension\/} 
of $\VV\md G$, see~\cite{po83}.

\begin{conj}[{\cite[Conj.\,3.5]{p95}}]      \label{conj:def=1}
Suppose that\/ $G$ is connected, $\VV$ is a self-dual $G$-module, and $\dif\ngv=1$. Then $\VV\md G$ 
is either an affine space or a hypersurface, i.e., $\hd \VV\md G\le 1$.
\end{conj}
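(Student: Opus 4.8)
The plan is to recast the conjecture as a bound on the minimal free resolution of $A:=\bbk[\VV]^G$ and to read the defect off a Koszul complex. Since $G$ is reductive, $A$ is Cohen--Macaulay by Hochster--Roberts, and it is a graded domain of Krull dimension $d=\dim\VV\md G$. Writing $A=R/\ah$ with $R=\bbk[y_1,\dots,y_e]$ a polynomial ring on $e=\ed\VV\md G$ minimal generators, the Auslander--Buchsbaum formula shows that the projective dimension of $A$ over $R$ equals $e-d=\hd\VV\md G$; and since $A$ is a domain, a height-one ideal $\ah$ in the factorial ring $R$ is principal. Hence $\hd\VV\md G\le 1$ is \emph{equivalent} to ``$A$ is a polynomial ring or a hypersurface'', and the whole task is to bound this embedding codimension by one unit --- the same unit carried by $\dif\ngv=1$.

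To make the defect homological, I would fix a homogeneous system of parameters $\theta_1,\dots,\theta_d$ of $A$. Because $A$ is Cohen--Macaulay these form a regular sequence on $A$, and because the radical of the Hilbert ideal equals that of $(\theta_1,\dots,\theta_d)$ one has $\ngv=V(\theta_1,\dots,\theta_d)\subset\VV$. As $\bbk[\VV]$ is Cohen--Macaulay, the grade of $(\theta_1,\dots,\theta_d)$ in $\bbk[\VV]$ equals $\codim_\VV\ngv=d-\dif\ngv$, so the Koszul complex of $\theta_1,\dots,\theta_d$ on $\bbk[\VV]$ has its top nonvanishing homology in Koszul degree $\dif\ngv$. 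Crucially, the $\theta_i$ are $G$-invariant, so this Koszul complex is $G$-equivariant; taking $G$-invariants (an exact functor, $G$ being reductive) returns the Koszul complex of $\theta_1,\dots,\theta_d$ on $A$, which is acyclic in positive degree. Thus every positive Koszul homology module is a $G$-module without nonzero invariants, and for $\dif\ngv=1$ the homology is forced into the shape $H_i=0$ for $i\ge 2$ while $H_1\ne 0$ with $H_1^G=0$. Before exploiting this I would first reduce to a finite generic stabiliser by the Luna--Richardson theorem~\cite{lr79}: if $S$ is a generic (reductive) stabiliser then restriction yields $\bbk[\VV]^G\isom\bbk[\VV^S]^{N_G(S)}$, so $\hd\VV\md G$ is preserved verbatim and $\VV^S$ inherits a nondegenerate invariant form, hence stays self-dual; it then remains to check, via \cite[Lemma\,3.4]{p95}, that the defect of the reduced action is still $1$.

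The hard part --- and the reason the statement is posed as a conjecture --- is the final implication: converting the Koszul profile ``$H_{\ge 2}=0$'' into the embedding bound ``$\ed\VV\md G\le d+1$''. The defect records the dimension of a single fibre (the nullcone), or equivalently the top nonvanishing Koszul homology of a system of parameters, whereas the embedding codimension records the number of algebra generators and relations of $A$; there is no soft implication between the two. Indeed the case $\dif\ngv=0$ is exactly the Popov conjecture~\cite{po76}, still open in general, so any unconditional proof of the present statement would subsume it and cannot be purely formal. The route I would actually pursue is therefore two-pronged. First, I would try to spend the single unit of defect by controlling the module structure of $A$ over $\bbk[\theta_1,\dots,\theta_d]$ tightly enough to admit at most one algebra generator beyond the $\theta_i$, using self-duality to force a symmetry (ideally Gorensteinness) of the Artinian reduction $A/(\theta_1,\dots,\theta_d)$ and the vanishing of $H_1^G$ to kill potential extra generators. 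Where this soft accounting stalls, I would fall back on the Hilbert--Mumford stratification of $\ngv$ to bound $\ed\VV\md G$ stratum by stratum, together with the existing classifications of representations of small defect --- precisely the mechanism by which the conjecture is verified for $G$ simple and for $\tO=(G\times H)/\Delta_H$. I expect the first prong to be the genuine crux: pinning down, in full generality, that exactly one extra generator (and one relation) can appear is where the proof must ultimately be won, and it is here that a uniform argument is still missing.
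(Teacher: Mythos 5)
You should first be clear about the status of the statement: it is Conjecture~\ref{conj:def=1}, and the paper contains no proof of it. It is quoted from \cite[Conj.\,3.5]{p95}, where it is established only for tori and for $SL_2$, and the present paper merely supports it by verifying the coisotropy specialisation (Conjecture~\ref{conj:coiso}) in two families via case-by-case classification (Theorems~\ref{thm:confirm-c1} and \ref{thm:confirm-tc1}). So there is no ``paper proof'' for your attempt to match, and any complete blind proof would be a new theorem going beyond the paper. Your proposal, to its credit, does not claim to be one: the homological frame you set up is sound --- $\hd\VV\md G\le 1$ is indeed equivalent to ``affine space or hypersurface'' via Auslander--Buchsbaum and factoriality of $R$; for a homogeneous system of parameters $\theta_1,\dots,\theta_d$ of $\bbk[\VV]^G$ one has $V(\theta_1,\dots,\theta_d)=\ngv$, the grade of $(\theta_1,\dots,\theta_d)$ in the Cohen--Macaulay ring $\bbk[\VV]$ equals $d-\dif\ngv$, and depth-sensitivity of the Koszul complex together with exactness of taking $G$-invariants gives $H_{\ge 2}=0$, $H_1\ne 0$, $H_1^G=0$. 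But the decisive implication --- from this Koszul profile to $\ed\VV\md G\le d+1$ --- is exactly where you stop, and as you yourself observe, the $\dif\ngv=0$ analogue is the still-open Popov conjecture \cite{po76}, so no soft argument of this type can close it. That is the genuine gap: the proposal is a reduction framework plus an honest admission that the crux is missing, not a proof.

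Two secondary points in your reduction step also need repair. The Luna--Richardson passage \cite{lr79} to $(\mathcal N_G(S):\VV^S)$ does preserve $\hd$ and self-duality (the invariant form restricts nondegenerately to $\VV^S$), but the defect is \emph{not} automatically preserved: \cite[Lemma\,3.4]{p95} gives only the inequality $\dif\ngv\le\dif$ of the reduced action, i.e.\ the wrong direction for you. In the paper's coisotropy setting this is pinned down to an equality by combining it with the complexity bound $\dif\fnme\le c$ (Theorem~\ref{thm:def-N-c}) and the invariance of $c$ under the reduction; for an arbitrary self-dual $\VV$ no such upper bound exists, so the reduced action may have defect strictly greater than $1$ and your hypothesis is not inherited. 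Moreover $\mathcal N_G(S)$ is in general disconnected, whereas the conjecture assumes $G$ connected, and connectedness matters for coregularity and hypersurface conclusions (this is why the paper invokes Shmel'kin's results on non-connected groups in its case checks). Finally, your fallback --- stratifying $\ngv$ and invoking classifications of small-defect representations --- is precisely the mechanism of Theorems~\ref{thm:confirm-c1} and \ref{thm:confirm-tc1}, and it yields only the special cases the paper already covers, not the general statement.
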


The assumption on self-duality is essential here, see Example in~\cite[p.\,94]{p95}. This conjecture
is proved for tori~\cite[Prop.\,3.10]{p95} and $G=SL_2$~\cite[Example\,3.12(1)]{p95}.

The isotropy representation of a reductive subgroup $H\subset G$ is orthogonal and the complexity of 
$G/H$ provides an upper bound on $\dif\fnme$, see Theorem~\ref{thm:def-N-c}. Therefore, 
Conjecture~\ref{conj:def=1} can be specialised to the following

\begin{conj}      \label{conj:coiso}
If $H$ is connected reductive and $c_G(G/H)=1$, then\/ $\hd \me\md H\le 1$. 
\end{conj}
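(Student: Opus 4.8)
The plan is to verify the conjecture via the two available classifications, reducing it to a finite case-by-case analysis in which $\bbk[\me]^H$ is determined precisely enough to read off its homological dimension. For case (a) the pairs $(G,H)$ with $G$ simple, $H$ connected reductive and $c=1$ are exactly those listed in~\cite{p92}; for case (b) the pairs with $\tc=1$ are collected in Table~\ref{tc=1} (from~\cite{ap02}). In both situations $H$ is reductive, so the homogeneous space is affine, $\bbk[\me]^H$ is finitely generated and, by the Hochster--Roberts theorem, Cohen--Macaulay. Hence $\hd \me\md H=\ed \me\md H-\dim\me\md H$ with $\dim\me\md H=2c+r=r+2$, and the assertion $\hd\le 1$ is equivalent to exhibiting a minimal generating system of $\bbk[\me]^H$ of cardinality at most $r+3$.

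Next I would exploit the dichotomy of Remark~\ref{rem:2-svoistva}: for affine $G/H$ with $c=1$, either $\pi$ is equidimensional ($\dif\fnme=0$) or $\dif\fnme=1$. In the first case $(H:\me)$ is self-dual and equidimensional, and the Popov conjecture---known for the module types that occur here (simple groups, tori, and products of two simple factors, cf.~\cite{po76,w92,w93}), and otherwise checkable by inspection---gives $\me\md H\simeq\mathbb A^{r+2}$, so $\hd=0$. All the remaining work is therefore concentrated in the case $\dif\fnme=1$, where one must bound $\hd$ from above, i.e. rule out the existence of two or more independent relations among a minimal set of generators.

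For case (b) this is in fact the only case that arises: by Theorem~\ref{thm:no-EQ}, every pair with $\tc=1$ satisfies $\dif\fN_H(\g)=1$, so no entry of Table~\ref{tc=1} is equidimensional. I would accordingly go through the table and, for each $(G,H)$, compute $\bbk[\g]^H$ directly from the $H$-module structure of $\g=\h\oplus\me$, using classical invariant theory and the branching data of~\cite{ap02}, and confirm that a minimal generating system has $r+3$ elements subject to a single relation. For case (a) the same explicit computation is performed for each pair of~\cite{p92}, once the equidimensional entries have been separated off by the previous paragraph.

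The main obstacle will be precisely these $\dif\fnme=1$ computations: locating the generators is routine, but establishing the upper bound $\hd\le 1$ demands control of \emph{all} the syzygies among them, i.e. showing there is no more than one. For the entries whose $H$ is a torus or involves only $SL_2$-factors, Conjecture~\ref{conj:def=1}---which asserts exactly $\hd\le 1$ for self-dual representations with $\dif=1$ and is proved in~\cite{p95} for tori and for $SL_2$---applies verbatim, since $(H:\me)$ is orthogonal. The remaining entries have no such shortcut and must be settled by hand, typically by producing a homogeneous system of parameters of length $r+2$ together with one further invariant and then identifying the unique relation; verifying that $\ed \me\md H$ does not exceed $r+3$ is the delicate step, and it is here that the self-duality of $(H:\me)$ is essential.
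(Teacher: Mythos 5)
Your skeleton coincides with the paper's: Conjecture~\ref{conj:coiso} is not proved in general there either, but is verified exactly in your two cases (Theorems~\ref{thm:confirm-c1} and~\ref{thm:confirm-tc1}) by running through the classification of~\cite{p92} and Table~\ref{tc=1} from~\cite{ap02}; your reduction of $\hd\,\me\md H\le 1$ to $\ed\,\me\md H\le r+3$, and your observation that Theorem~\ref{thm:no-EQ} forces $\dif\fN_H(\g)=1$ for every entry of Table~\ref{tc=1}, are both correct. The gap is that everything past this point is a promissory note, and the two shortcuts you propose to discharge it are unsound. First, you dispose of the equidimensional entries by invoking the Popov conjecture as ``known for the module types that occur here''; it is not. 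The lists contain $H=\GR{C}{n-2}\times\GR{A}{1}\times\GR{A}{1}$ (three simple factors), $H=\GR{G}{2}{\cdot}\mathsf T_1$, $H=SL_2{\cdot}\mathsf T_1$, and $H=(SL_2)^2{\cdot}\bbk^\times$ (semisimple times a torus), none of which is covered by~\cite{po76}, \cite{w92}, or \cite{w93} (the last has exceptions even for two simple factors). Your hedge ``otherwise checkable by inspection'' is circular: inspecting whether $\bbk[\me]^H$ is polynomial \emph{is} the computation, and moreover the dichotomy of Remark~\ref{rem:2-svoistva} requires knowing $\dif\fnme$ for each entry in advance, which the classifications do not supply. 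Second, the proved instances of Conjecture~\ref{conj:def=1} cover tori and $H=SL_2$ only; they settle entry 6 of Table~\ref{tc=1} and the pair $(Sp_4,SL_2)$, but not the entries with $H=SL_2{\cdot}\mathsf T_1$ or $(SL_2)^2{\cdot}\bbk^\times$, which you claim they handle ``verbatim''.

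What is missing is precisely the machinery that makes the case-by-case work feasible in the paper: stepwise quotients $\me\to\me\md H_1\to(\me\md H_1)\md H_2$ through the factors of $H$, combined with the classifications of coregular representations~\cite{g78a,ag79} and of complete intersections~\cite{shm01}, and with explicit multidegrees of basic invariants to identify the $H_2$-module structure of $\me\md H_1$; Shmel'kin's finite coregular extensions~\cite{shm}, which dispose of items 1, 3--5 of Table~\ref{tc=1} at a stroke; slice representations together with Popov's monotonicity of homological dimension~\cite{po83}, which reduce item 8 to item 2 and item 2 to results of~\cite{shm01,w93}; and, for the $SL_2{\cdot}\mathsf T_1$ cases, explicit generators with identification of the unique relation, as in \eqref{eq:rel-sl(2)-inv}--\eqref{eq:rel-sl(2)-inv2}. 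Without these your proposal stops exactly where the proof has to begin.
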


As a support to Conjecture~\ref{conj:coiso}, we prove below two theorems. Before stating these 
theorems, we describe our general approach. If $H$ is simple and $H\subset GL(\VV)$, then we use 
various classification results on the structure of $\VV\md H$:
\begin{itemize}
\item when $\VV\md H\simeq \mathbb A^N$ \cite{ag79, g78a};
\item when $\VV\md H$ is a complete intersection, especially a hypersurface~\cite{shm, shm01};
\item when $H=SL_2$ and $\hd (\VV\md SL_2)\le 3$ \cite[Theorem\,4]{po83}.
\end{itemize}
If $H$ is not simple, then we work with consecutive quotients, using factors of $H$. Suppose that
$H=H_1\times H_2$ is a product of reductive groups. Then one has the quotient morphisms
\beq    \label{eq:chain}
    \me\stackrel{\pi_1}{\longrightarrow} \me\md H_1\stackrel{\pi_2}{\longrightarrow} 
    (\me\md H_1)\md H_2=\me\md H .
\eeq
In all cases below, we can choose $H_1$ such that $\hd (\me\md H_1)\le 2$, hence $\me\md H_1$ is a 
complete intersection. 
(In most cases, we actually obtain $\hd (\me\md H_1)\le 1$.) This yields an 
embedding $\me\md H_1\hookrightarrow \VV_2$ into an $H_2$-module $\VV_2$ such that 
$\codim_{\VV_2} (\me\md H_1)\le 2$.  
Then using the equations of $\me\md H_1$ in $\VV_2$, we describe $\me\md H$ as a subvariety of 
$\VV_2\md H_2$. This allows us to handle the second step in \eqref{eq:chain} and prove that
$\hd (\me\md H)\le 1$.

To describe $\me$ as $H$-module, we need some notation. The fundamental weights of $H$ are 
denoted by $\{\vp_i\}$ and $\esi$ stands for the basic character of one-dimensional torus $\bbk^\times=\mathsf T_1$. 
The fundamental weights for the second (resp. third) simple factor of $H$ are marked with prime (resp. double prime). 
The unique fundamental weight of $SL_2$ is denoted by $\vp$. Write $\odin$ for the trivial one-dimensional representation.

As in~\cite{g78a,g78b,shm01}, the simple $H$-module $\WW_\lb$ is identified with its highest weight $\lb$, using 
the multiplicative notation for $\lb$ in terms of the fundamental weights. For instance, we write 
$\vp_j\vp_k+3\vp_i^2$ in place of $\WW_{\vp_j+\vp_k}+3\WW_{2\vp_i}$. Finally, $\lb^*$ is a dual 
$H$-module to $\lb$.

\begin{thm}     \label{thm:confirm-c1}
If $G$ is simple and $c_{G}(G/H)=1$, then 
\begin{itemize}
\item either \ $\me\md H$ is an affine space and $\dif\fnme=0$; 
\item or \ $\me\md H$ is a hypersurface and $\dif\fnme=1$.
\end{itemize}
(Hence an \emph{a priori} conceivable case, where $\me\md H\simeq \mathbb A^n$ and $\dif\fnme=1$, does not occur.)
\end{thm}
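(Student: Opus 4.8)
The plan is to perform a case-by-case verification over the classification of pairs $(G,H)$ with $G$ simple, $H$ connected reductive, and $c_G(G/H)=1$. Such a classification is available in~\cite{p92} (see also~\cite[Chapter\,3]{p99}), and for each pair on that list I would first write down the coisotropy module $\me$ as an $H$-module, using Theorem~\ref{thm:coiso-props} to control $\dim\me$, the generic stabiliser $S$, and the rank $r$. By Theorem~\ref{thm:def-N-c} we already know $\dif\fnme\le c=1$, so for each pair only two outcomes are possible: either $\dif\fnme=0$ or $\dif\fnme=1$. The entire content of the theorem is thus the \emph{correlation} between the value of $\dif\fnme$ and the structure of $\me\md H$, namely that equidimensionality forces $\me\md H\simeq\mathbb A^n$ and non-equidimensionality forces $\me\md H$ to be a hypersurface, with the diagonal case (affine space together with $\dif=1$) excluded.

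For each entry I would determine $\bbk[\me]^H$ explicitly, drawing on the structural input described just before the theorem. When $H$ is simple and $\me$ is one of the modules classified in~\cite{g78a,ag79} (coregular representations, i.e.\ $\me\md H\simeq\mathbb A^N$), I read off $\me\md H\simeq\mathbb A^n$ directly; for these I would separately check that $\dif\fnme=0$, e.g.\ by comparing $\dim\fnme$—bounded above by Eq.~\eqref{eq:gerry}—against $\dim\me-\dim\me\md H=\dim\me-(2c+r)$. When instead $\me$ appears on the hypersurface lists of~\cite{shm,shm01}, I obtain $\hd\me\md H=1$ and must confirm $\dif\fnme=1$, which by Remark~\ref{rem:2-svoistva} amounts to checking that $\pi$ is \emph{not} equidimensional; here Proposition~\ref{prop:granitsy}\,{\sf (i)} combined with a direct computation of $\dim\fnme$ (again via the bound~\eqref{eq:gerry}) should pin down the defect. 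For the non-simple $H$ I would use the iterated-quotient technique of~\eqref{eq:chain}: choose a factor $H_1$ with $\hd(\me\md H_1)\le 2$, embed $\me\md H_1$ in an $H_2$-module $\VV_2$, and analyse the second quotient to establish $\hd(\me\md H)\le 1$, then determine the defect in the resulting model.

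The main obstacle I expect is twofold. First, correctly identifying $\me$ as an $H$-module for each pair on the list, since $\me=(\g/\h)^*$ requires a branching computation of $\g\downarrow H$ and a careful bookkeeping of the summands; an error here propagates to every subsequent invariant-theoretic claim. Second—and this is the genuinely delicate point—establishing the value of $\dif\fnme$ in the borderline entries, because merely knowing $\hd\me\md H$ does not immediately reveal whether $\pi$ is equidimensional. The crux is to rule out the ``conceivable but forbidden'' configuration $\me\md H\simeq\mathbb A^n$ with $\dif\fnme=1$: this means that whenever $\me\md H$ is an affine space one must independently certify $\dif\fnme=0$, which requires an exact (not merely upper-bound) computation of $\dim\fnme$ in each affine-space case. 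I anticipate that the upper bound~\eqref{eq:gerry}, which is attained for representations of the Richardson type (cf.~Proposition~\ref{lm:null-equi}), will frequently give the exact nullcone dimension and thereby settle the defect; the cases where this bound is \emph{not} sharp are exactly where the verification demands the most care.

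\begin{proof}[Remark on method]
Because the list in~\cite{p92} is finite and each coisotropy module is small, the verification is finite and mechanical in principle; the art lies in organising the cases so that the affine-space entries and the hypersurface entries are separated cleanly, and in reconciling each computed $\dif\fnme$ with the geometry of $\me\md H$ as dictated by Remark~\ref{rem:2-svoistva}.
\end{proof}
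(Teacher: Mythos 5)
Your overall strategy is the paper's strategy: run through the pairs of \cite[Table~1]{p92}, read off polynomiality of $\bbk[\me]^H$ from the coregular classifications \cite{g78a,ag79} when $H$ is simple, and use the iterated quotients \eqref{eq:chain} when $H$ is not simple. You also correctly isolate the crux, namely that in each affine-space case one must independently certify $\dif\fnme=0$. The gap is in the only tool you offer for that certification. You propose to compare the upper bound \eqref{eq:gerry} with the generic fibre dimension $\dim\me-(2c+r)$, but this comparison is inconclusive on entries that actually occur in the list. Indeed, if $S$ is finite, then $r=\rk\g$ by Theorem~\ref{thm:coiso-props}(3), and the generic fibre dimension equals $\dim U-c=\dim U-1$; whereas for an $s$-regular $H$ the right-hand side of \eqref{eq:gerry} equals $\tfrac{1}{2}(\dim\g-\rk\g)=\dim U$. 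So the bound overshoots by exactly one and cannot distinguish $\dif\fnme=0$ from $\dif\fnme=1$. By Remark~\ref{rem:can-happen}(2), the pair $(G,H)=(SO_{2n+1},SL_n)$ is precisely of this type ($c=1$, $S$ finite, $H$ $s$-regular, yet $\dif\fnme=0$), and it is one of the affine-space entries. You flag such cases as ``demanding the most care'' but give no method for them. The same defect (so to speak) afflicts your plan for the hypersurface entries: there you must prove $\dif\fnme=1$, i.e., produce a \emph{lower} bound $\dim\fnme>\dim\me-(2c+r)$, which no combination of the upper bounds \eqref{eq:gerry} and Proposition~\ref{prop:granitsy}{\sf (i)} can deliver.

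The paper closes this gap with a different input. For simple $H$, equidimensionality is not estimated but read off Schwarz's classification \cite{g78b} of representations with a free module of covariants (cofree $=$ coregular $+$ equidimensional), which settles $\dif\fnme=0$ for all coregular simple-$H$ entries at once. The two affine-space entries with non-simple $H$ get bespoke treatment: $(SL_{2n},SL_n\times SL_n)$ via comparison with the symmetric subgroup $(SL_n)^2{\cdot}\mathsf T_1$, and $(\GR{C}{n},\GR{C}{n-2}\times\GR{A}{1}\times\GR{A}{1})$ by proving that both stages of \eqref{eq:chain} are equidimensional, with ad hoc arguments for $n=3,4$. A smaller point of calibration: the hypersurface lists \cite{shm,shm01} you intend to quote are used in the paper for Theorem~\ref{thm:confirm-tc1}, not here; the hypersurface entries of the present theorem (binary sextics for $(Sp_4,SL_2)$, and the pairs with $H=\GR{B}{3}\times\GR{A}{1}$ and $H=\GR{G}{2}{\cdot}\mathsf T_1$) are handled by explicit computation of the invariants, their multi-degrees, and the residual torus or $\GR{A}{1}$-weights in the iterated quotient, from which both the hypersurface structure and the value of the defect are extracted.
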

\begin{proof}
The list of such pairs $(G,H)$ consists of 17 items, and we refer to their numbering in~\cite[Table\,1]{p92}
(see also Table\,1 in~\cite{p99}).  The output is that $\bbk]\me]^H$ is a polynomial algebra for {\rus N0}\,1,\,4--9,\,13,\,16,\.17. For the other 
cases, $\bbk]\me]^H$ is a hypersurface.

Let us provide some details to our computations. If $H$ is simple, then the pairs with a polynomial 
algebra $\bbk]\me]^H$ can be picked from the list of "coregular representations" of $H$ obtained by Schwarz~\cite{g78a} and Adamovich--Golovina~\cite{ag79}. This applies to {\rus N0}\,4--8,\,13,\,16--17. Moreover, for all these cases, one also has $\dif\fnme=0$, see~\cite{g78b}.

For {\rus N0}\,1, we have $(G,H)=(SL_{2n}, SL_n\times SL_n)$ and 
$\me=\vp_1\vp'_1+(\vp_1\vp'_1)^*+\odin$. Here one can use the fact that 
$\hat H=(SL_n)^2{\cdot} \mathsf T_1$ is a symmetric subgroup of $G$, with isotropy representation
$\hat\me=\vp_1\vp'_1\esi+(\vp_1\vp'_1\esi)^*$,  and hence 
$\bbk[\hat\me]^{\hat H}$ is a polynomial algebra.

For {\rus N0}\,9, we have $(G,H)=(\GR{C}{n}, \GR{C}{n-2}\times \GR{A}{1}\times \GR{A}{1})$, $n\ge 3$, 
and  $\me=\vp_1\vp'+\vp_1\vp''+\vp'\vp''$.
\\
Here $\me\vert_{\GR{C}{n-2}}=4\vp_1+4\odin$. If $n\ge 4$, then $(4\vp_1)\md \GR{C}{n-2}\simeq
\mathbb A^6$, and it is isomorphic to $\vp'\vp''+2\odin$ as $\GR{A}{1}\times\GR{A}{1}$-module. Hence
$\me\md \GR{C}{n-2}\simeq \vp'\vp''+2\odin+\vp'\vp''$. It is easily seen that
$(2\vp'\vp'')\md \GR{A}{1}\times\GR{A}{1}\simeq \mathbb A^3$ (it is also {\rus N0}\,1 with $n=2$).
Therefore, $\me\md H\simeq \mathbb A^5$. By~\cite{g78b}, $(\GR{C}{n-2}, 4\vp_1)$ is equidimensional
if and only if $n\ge 5$. Then both quotient morphisms
\[
  \me \to \me\md \GR{C}{n-2} \to (\me\md \GR{C}{n-2})\md (\GR{A}{1}\times\GR{A}{1})=\me\md H
\]
are equidimensional. This already shows that $\me\md H$ is an affine space for $n\ge 4$ and, moreover, 
$(H:\me)$ is equidimensional, if $n\ge 5$. Some other {\sl ad hoc\/} methods allow us to handle the case 
with $n=3$ and prove the equidimensionality for $n=4$.

The other cases, where $H$ is semisimple, are $Sp_4\supset SL_2$ ({\rus N0}\,14) and
$\GR{B}{5}\supset \GR{B}{3}\times\GR{A}{1}$ ({\rus N0}\,12). 

-- \ In  {\rus N0}\,14, the $SL_2$-module 
$\me$ equals $\vp^6$ (binary forms of degree~6), and it is a classical fact from XIX century that 
$\vp^6\md SL_2$ is a hypersurface, cf.~also~\cite[Theorem\,4]{po83}.

-- \ In {\rus N0}\,12, $\me=\vp_3\vp'^2+\vp_1$ as $\GR{B}{3}\times\GR{A}{1}$-module. Then
$\me=3\vp_3+\vp_1$ as $\GR{B}{3}$-module and  $\me\md \GR{B}{3}\simeq \mathbb A^{10}$.
Using explicit multi-degrees of basic $\GR{B}{3}$-invariants, see {\rus N0}\,6 in~\cite[Table\,3]{ag79}, one 
sees that $\me\md \GR{B}{3}\simeq \vp'^2+\vp'^4+2\odin$ as $\GR{A}{1}$-module, and therefore
$(\me\md \GR{B}{3})\md \GR{A}{1}=\me\md H$ is a hypersurface.

Consider an item, where $H$ is not semisimple.
For {\rus N0}\,11, we have $(G,H)=(\GR{B}{4},\GR{G}{2}{\cdot}\mathsf T_1)$ and $\me=\vp_1\esi+\vp_1+\vp_1\esi^{-1}$. Then $\me=3\vp_1$ as $\GR{G}{2}$-module and $(3\vp_1)\md \GR{G}{2}\simeq\mathbb A^7$. Using explicit multi-degrees of basic $\GR{G}{2}$-invariants, cf. 
{\rus N0}\,1 in~\cite[Table\,4]{ag79}, we obtain that the $\mathsf T_1$-weights 
on $\mathbb A^7$ are $\esi^2,\esi,\esi^{-1},\esi^{-2}, 1,1,1$. Hence
$\mathbb A^7\md \mathsf T_1=\me\md H$ is a hypersurface.
\end{proof}

\begin{rmk}         \label{rem:ispravl}
I would like to fix some misprints and omissions in~\cite[Table~1]{p92}.

\textbullet \ In {\rus N0}\,1, the group $H$ has to be $SL_n\times SL_n$;

\textbullet \ the summand $\odin$ has to be added to $\me$ in {\rus N0}\,3,\,6. One also has $r=4$ in 
{\rus N0}\,3. 

\textbullet \ for {\rus N0}\,12, the right formula for $\me$ is given above.
\end{rmk}
A similar approach works for affine homogeneous spaces $\tG/\tH$ with $\tc=1$.

\begin{thm}     \label{thm:confirm-tc1}
If\/ $G$ is simple and $\tc=c_{\tG}(\tG/\tH)=1$, then $\g\md H$ is a hypersurface.
\end{thm}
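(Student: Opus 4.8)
The plan is to proceed by a case-by-case verification over the classification recorded in Table~\ref{tc=1}, exactly parallel to the proof of Theorem~\ref{thm:confirm-c1}. Recall that $\tme\md\tH\simeq\g\md H$ is the quotient for the isotropy representation $(H:\g)$, and that by~\eqref{eq:dim-g/H} one has $\dim\g\md H=2\tc+\tr=\dim\g-\dim\h$. Since $\g$ is simple and $\tc=1$, Theorem~\ref{thm:no-EQ} already gives $\dif\fN_H(\g)=1$, so $\tilde\pi$ is \emph{not} equidimensional. The content of the theorem is then to rule out the remaining \emph{a priori} possibility that $\g\md H$ is an affine space (which non-equidimensionality alone does not exclude) and to show instead that $\bbk[\g]^H$ has homological dimension exactly $1$. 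For each of the eight items (with No.\,7 split into $7(l)$ and $7(s)$) the first step is to write the branching $\g|_H$ as an explicit $H$-module.

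The only infinite series is No.\,1, $(G,H)=(SL_{n+1},SL_n)$, and I would treat it uniformly. Writing a traceless matrix in block form with $\mathfrak{gl}_n$-block $B$, a column $v\in\VV=\BC^n$ and a row $w\in\VV^*$, one has $\g|_{SL_n}=\sln\oplus\VV\oplus\VV^*\oplus\odin$. The $SL_n$-invariants are generated by the traces $\tra B^k$ ($1\le k\le n$), the pairings $p_k=\langle w,B^kv\rangle$ ($0\le k\le n-1$), and two ``bordered determinants'' $P=\det(v,Bv,\dots,B^{n-1}v)$ and the analogous $Q$ built from the rows $w,wB,\dots,wB^{n-1}$, which are $SL_n$- but not $GL_n$-invariant. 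The traces and pairings span the weight-zero part for the torus $GL_n/SL_n$, namely $\bbk[\g]^{GL_n}$; as $(SL_{n+1},GL_n)$ is a symmetric (strong Gelfand) pair, this is a polynomial ring $\simeq\mathbb A^{2n}$ (cf. Remark~\ref{rem:Andrus}). The single defining relation is $PQ=\det(p_{i+j})_{0\le i,j\le n-1}$, which by the Cayley--Hamilton theorem is a polynomial in $\tra B^k$ and $p_0,\dots,p_{n-1}$. Since $P$ and $Q$ are indecomposable of opposite $GL_n/SL_n$-weight, they are genuinely new generators, so $\ed(\g\md H)=2n+2$ while $\dim\g\md H=2n+1$; hence $\g\md H$ is a hypersurface.

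For the finitely many remaining pairs I would split according to the structure of $H$. When $H$ is simple (Nos.\,3,\,4,\,5, with $H=G_2,\,SL_3,\,SO_3$) I compute $\g|_H$ and locate $\g\md H$ in Shmel'kin's classification of representations of simple groups whose quotient is a hypersurface~\cite{shm,shm01}; the case $H=SO_3\simeq SL_2$ (No.\,5) can alternatively be settled via Popov's analysis of $SL_2$-quotients~\cite{po83}. When $H$ is not simple (No.\,2, $Sp_4\times SL_2$; Nos.\,6,\,7,\,8 involving a central torus), I would use the consecutive-quotient chain~\eqref{eq:chain}: factor $H=H_1\times H_2$, choose $H_1$ so that $\g\md H_1$ is an affine space or a hypersurface (read off from~\cite{g78a,ag79,shm01}), realise $\g\md H_1$ as a subvariety of an $H_2$-module of codimension $\le 1$, and then compute the second quotient, tracking the $\mathsf T_1$-weights for the torus factors exactly as in the proof of Theorem~\ref{thm:confirm-c1}.

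The main obstacle I anticipate is the non-simple and torus cases Nos.\,6,\,7,\,8 (and the two sub-cases of No.\,7): there the two-step quotient must be carried through explicitly, and one must verify both that the number of generators exceeds $\dim\g\md H$ by exactly one \emph{and} that the resulting single relation is genuine, so that $\g\md H$ is a hypersurface rather than an affine space. Non-equidimensionality ($\dif\fN_H(\g)=1$) does not by itself produce a relation among the generators — a coregular quotient may well fail to be equidimensional — so the affine-space alternative has to be excluded by exhibiting the defining relation in each instance, as in the series No.\,1 above.
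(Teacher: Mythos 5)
Your overall strategy coincides with the paper's: a case-by-case verification over Table~\ref{tc=1}, citing Shmel'kin for the items with simple $H$ and running a two-step quotient with explicit $\mathsf T_1$-weight bookkeeping for the torus cases (this is exactly how the paper treats No.\,7). Your direct treatment of No.\,1 is a legitimate alternative to the paper's appeal to \cite{shm}: the classical FFT for $SL_n$ on $\mathfrak{gl}_n\oplus\VV\oplus\VV^*$ gives precisely the generators you list, the relation $PQ=\det(p_{i+j})$ is correct, the traces and pairings freely generate $\bbk[\g]^{GL_n}$ (the pair $(SL_{n+1},GL_n)$ being symmetric), and the central-torus weight argument does show that $P,Q$ are indispensable, so $\ed \g\md H=2n+2=\dim\g\md H+1$ and the height-one prime ideal of relations is principal.

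The genuine gap is at No.\,2, $(G,H)=(Sp_6,\,Sp_4\times SL_2)$ --- the case the paper calls the most difficult, and the one case you do not flag as an obstacle (you single out only Nos.\,6,\,7,\,8). Your recipe requires a factor $H_1$ with $\hd(\g\md H_1)\le 1$, i.e.\ a codimension-$\le 1$ embedding of $\g\md H_1$ into an $H_2$-module; no such factor exists here. For $H_1=Sp_4$ one has $\g\vert_{Sp_4}=\vp_1^2+2\vp_1+3\odin$, and $(\vp_1^2+2\vp_1)\md Sp_4$ is neither an affine space nor a hypersurface: the best available statement is that it is a complete intersection with $\hd\le 2$, and even this is obtained indirectly, by realising $(Sp_4:\vp_1^2+2\vp_1)$ as a slice representation of $(SL_4:\tvp_1^2+\tvp_2+2\tvp_1^*)$, which has $\hd=2$ by \cite[Table~9]{shm01}, and invoking Popov's monotonicity of homological dimension \cite[Theorem~2]{po83}. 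For $H_1=SL_2$ one has $\g\vert_{SL_2}=\vp^2+4\vp+10\odin$, whose quotient needs far more generators than its dimension. So your second step necessarily starts from a codimension-$2$ subvariety, and the elimination argument becomes substantially harder; the paper's outline additionally requires Wehlau's result \cite{w93} that $(\vp_1^2+\vp_1\vp')\md(Sp_4\times SL_2)\simeq\mathbb A^5$. Relatedly, your plan to handle No.\,8 by the same direct two-step recipe hits the same wall, since $\mathfrak{sl}_4\vert_{(SL_2)^2}=2\vp\vp'+\vp^2+\vp'^2+\odin$ again admits no factor whose quotient has $\hd\le 1$; the paper instead disposes of No.\,8 by noting it is a slice representation of No.\,2 and applying the same monotonicity theorem. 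Without these slice/complete-intersection inputs, your argument does not close cases 2 and 8, and hence does not prove the theorem.
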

\begin{proof}  We check the assertion for all items in Table~\ref{tc=1}. The $H$-modules $\g$ are 
given below. The underlined summands give rise to the adjoint representation of $H$.

1. \ $\slno=\un{\vp_1\vp_{n-1}}+\vp_1+\vp_{n-1}+\odin$ \ as $SL_n$-module.

2. \ $\mathfrak{sp}_6=\un{\vp_1^2}+\vp_1\vp'+\un{\vp'^2}$ \ as $Sp_4\times SL_2$-module.

3. \ $\mathfrak{so}_7=\vp_1+\un{\vp_2}$ \ as $\GR{G}{2}$-module.

4. \ $\GR{G}{2}=\un{\vp_1\vp_2}+\vp_1+\vp_2$ \ as $SL_3$-module.

5. \ $\mathfrak{sl}_3=\un{\vp^2}+\vp^4$ \ as $SL_2$-module \quad (we use the isomorphism 
$\tri\simeq\gt{so}_3$).

6. \ $\mathfrak{sl}_3=(\esi+\mu+\esi\mu)+(\esi+\mu+\esi\mu)^*+\un{2\odin}$ as $\mathsf T_2$-module.

7($s$). $\mathfrak{sp}_4=\vp^2\esi^2+ \un{\vp^2}+\vp^2\esi^{-2}+\un{\odin}$ \ as 
$SL_2{\cdot} \mathsf T_1$-module.

7($l$). $\mathfrak{sp}_4=\un{\vp^2}+\vp\esi+\vp\esi^{-1}+\esi^2+\esi^{-2}+\un{\odin}$ \ as $SL_2{\cdot} \mathsf T_1$-module.

8. \ $\mathfrak{sl}_4=\vp\vp'\esi+\vp\vp'\esi^{-1}+\un{\vp^2}+\un{\vp'^2}+\un{\odin}$ \ as 
$(SL_2\times SL_2){\cdot} \mathsf T_1$-module.

\noindent
$\blacktriangleright$ \ Items 1,\,3--5 are representations admitting a {\it finite coregular extension\/} in the sense of 
Shmel'kin~\cite{shm}, and he proves that here $\g\md H$ is an (explicitly described) hypersurface.

\noindent
$\blacktriangleright$ \ Items 7($l,s$)  can be handled in a similar way, and we provide details for one of them.
\\ \indent
{\bf --} \ In the $s$-case, we have
$\gt{sp}_4=3\vp^2+\odin$ as $SL_2$-module, and $(3\vp^2)\md SL_2$ is a hypersurface,
see~\cite[Theorem\,4]{po83}. We skip below the trivial $H$-module $\odin$.
It is not hard to write explicitly down the basic invariants 
for $(SL_2 : 3\vp^2)$. 
Let $F$ denote the basic invariant of degree 2 for the adjoint representation
$(SL_2:\vp^2)$, i.e., $F(v)=(v,v)$ for $v\in\vp^2$.
If $(v_1,v_2,v_3)\in 3\vp^2$, then the basic $SL_2$-invariants are:
$F_{ij}$, $1\le i\le j\le 3$, and $\tilde F$, where $F_{ij}(v_1,v_2,v_3)=(v_i,v_j)$ and 
$\tilde F=\det [v_1,v_2,v_3]$. The basic relation is
\beq   \label{eq:rel-sl(2)-inv}
          \det \bigl( (F_{ij})_{i,j=1}^3 \bigr)={\tilde F}^2 , 
\eeq
where $\boldsymbol{F}=(F_{ij})_{i,j=1}^3$ is the symmetric 3 by 3 matrix.
If  $t{\cdot}(v_1,v_2,v_3)=(t^{-2}v_1,v_2,t^{2}v_3)$ for $t\in \mathsf T_1$, then $F_{13}, F_{22}, \tilde F$ are already 
$\mathsf T_1$-invariants, but $t{\cdot}F_{11}=t^4 F_{11}$, $t{\cdot}F_{12}=t^2 F_{12}$, 
$t{\cdot}F_{23}=t^{-2} F_{23}$, and $t{\cdot}F_{33}=t^{-4} F_{33}$. Therefore, the 
other $SL_2{\cdot}\mathsf T_1$-invariants are:
\[
  x_1=F_{11}F_{33}, \ y_1=F_{12}F_{23}, \ z_1=F_{11}F_{23}^2, \ z_2=F_{33}F_{12}^2 .
\]
Thus, we get seven generators and yet another relation 
\beq            \label{eq:rel-sl(2)-inv2}
               x_1y_1^2=z_1z_2 . 
\eeq
Expressing the $\det\bigl(\boldsymbol{F}\bigr)$ via these $SL_2{\cdot}\mathsf T_1$-invariants, we rewrite 
\eqref{eq:rel-sl(2)-inv} as
\[
   x_1F_{22}+2y_1F_{13}+F_{13}^2F_{22}+z_1+z_2=\tilde F^2 .
\]
Therefore, either $z_1$ or $z_2$ can be excluded from the minimal generating system of $\bbk[\me]^H$.
Afterwards, \eqref{eq:rel-sl(2)-inv2} yields the relation for the remaining six invariants.

{\bf --} \ In the $l$-case, $\gt{sp}_4=\vp^2+2\vp+3\odin$ as $SL_2$-module, and $(\vp^2+2\vp)\md SL_2$ is a hypersurface, too. The rest is similar to the $s$-case.

\noindent
$\blacktriangleright$ \ {\rus N0}\,6 is easy and left to the reader.

\noindent
$\blacktriangleright$ \ {\rus N0}\,8 is a slice representation for {\rus N0}\,2. Therefore, using the 
monotonicity results for homological dimension of algebras of invariants~\cite[Theorem\,2]{po83}, it 
suffices to handle {\rus N0}\,2.  

\noindent
$\blacktriangleright$ \ {\rus N0}\,2 is the most difficult case, and we only give some hints.
Here $\gt{sp}_6={\vp_1^2}+2\vp_1+3\odin$ as $Sp_4$-module.
The representation  
$(Sp_4 : {\vp_1^2}+2\vp_1)$ is a slice for $(SL_4: {\tvp_1^2}+\tvp_2+2\tvp_1^*=\tilde V)$ 
(use $v\in \tvp_2$ such that $(SL_4)^v=Sp_4$)
and $\hd(\tilde V\md SL_4)=2$~\cite[Table\,9]{shm01}. Therefore, $({\vp_1^2}+2\vp_1)\md Sp_4$ is a complete intersection and  $\hd ({\vp_1^2}+2\vp_1)\md Sp_4\le 2$~\cite{po83}. The subsequent argument is similar in spirit with that in case 7($s$), but much more elaborated. We also need the fact that, for the
truncated $Sp_4\times SL_2$-module ${\vp_1^2}+\vp_1\vp'=\VV\subset\gt{sp}_6$, the quotient $\VV\md (Sp_4\times SL_2)$ is an affine space of dimension~5~\cite{w93}.
\end{proof}

\section{Coisotropy representations and related Poisson structures}
\label{sect:poisson}

\noindent
In this section, $G/H$ is affine, and we think of $\me$ as a subspace of $\g^*$. The cotangent bundle 
$T^*_{G/H}=G\times_H\me$ is a symplectic $G$-variety, hence the algebra $\bbk[T^*_{G/H}]$ is 
equipped with the associated Poisson bracket $\{\ ,\, \}$. This bracket restricts to the algebra of 
$G$-invariants $\bbk[T^*_{G/H}]^G\simeq \bbk[\me]^H$, which makes $\me\md H$ a Poisson variety.  
Recall that $\dim\me\md H=2c+r$.

The Poisson bracket $(\bbk[\me]^H, \{\ ,\, \})$ has the following explicit description, see 
e.g.~\cite[Ch.\,II, \S\,1.8]{v01}.
The algebra of regular functions $\bbk[\me]$ is also the symmetric algebra of $\g/\h\simeq \me^*$, hence
$\bbk[\me]^H=\mathcal S(\g/\h)^H$. Let $f_1,f_2\in \mathcal S(\g/\h)^H$ and 
$\ap\in (\g/\h)^*=\me\subset\g^*$. Then
\beq          \label{eq:vinb01}
      \{f_1,f_2\}(\ap)=\langle \ap, [\textsl{d}_\ap f_1, \textsl{d}_\ap f_2]\rangle .
\eeq
The commutator on the right-hand side of this formula is understood as the commutator in $\g$ of any 
representatives of the cosets $\textsl d_\ap f_1, \textsl d_\ap f_2\in \g/\h$, because the result does not 
depend on the choice of these representatives in $\g$ (if $f_1$ and $f_2$ are $H$-invariants!).
Note that if $\h=\g^\sigma$ for an involution $\sigma$, then the right-hand side in~\eqref{eq:vinb01} 
is identically zero. That is, for the symmetric variety $G/H$,
the Poisson bracket vanishes on $\bbk[T^*_{G/H}]^G$. Let $\rk \{\ ,\, \}$ denote the {\it rank of the 
Poisson bracket}, i.e., the maximal dimension of symplectic leaves in $\me\md H$. It follows from~\cite[Ch.\,II, \S\,3, Theorem\,2]{v01} that in our case $\rk \{\ ,\, \}=2c$.

Let $(\eus P, \{\ ,\, \}_{\eus P})$ be an affine Poisson variety.
A subalgebra $\gA$ of $\bbk[\eus P]$ is said to be {\it Poisson-commutative}, if $\{\gA,\gA\}=0$. 
As is well-known, if $\gA$ is Poisson-commutative, then
\[ 
    \trdeg\gA\le \dim \eus P-\frac{1}{2}\rk\! \{\ ,\, \}_{\eus P} .
\]
Therefore, we arrive at the following conclusion.  
\begin{lm}    \label{lm:trdeg-A}
If\/ $\gA$ is a Poisson-commutative subalgebra of\/ $\bbk[\me]^H$, then $\trdeg\gA\le c+r$.
\end{lm}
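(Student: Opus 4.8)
The plan is to apply the general transcendence-degree bound for Poisson-commutative subalgebras, recalled immediately above, to the single affine Poisson variety of interest, namely $\eus P=\me\md H$. To do so I need only two numerical inputs: the dimension of $\me\md H$ and the rank of its Poisson bracket. Both have already been recorded in this section. Indeed, we have noted that $\dim\me\md H=2c+r$, and the analysis of the explicit bracket~\eqref{eq:vinb01} (via the cited result of Vinberg) gives $\rk\{\ ,\, \}=2c$.

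With these two values in hand, the estimate is immediate. Substituting into $\trdeg\gA\le\dim\eus P-\tfrac{1}{2}\rk\{\ ,\, \}_{\eus P}$ gives
\[
   \trdeg\gA\le (2c+r)-\tfrac{1}{2}(2c)=c+r .
\]
This is exactly the asserted inequality, so the proof is complete once the general bound is invoked.

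For completeness I would briefly recall why that general bound holds, since it is the only conceptual ingredient. Restricting the functions of a Poisson-commutative $\gA$ to a generic symplectic leaf $L\subset\eus P$, of dimension $\rk\{\ ,\, \}=2s$, produces an involutive system on the symplectic manifold $L$; on a symplectic manifold of dimension $2s$ an involutive family has at most $s$ functionally independent members, while the $\dim\eus P-\rk\{\ ,\, \}$ transverse (Casimir) directions account for the remainder, yielding $\trdeg\gA\le\dim\eus P-\tfrac{1}{2}\rk\{\ ,\, \}$. Consequently there is no genuine obstacle here: the lemma is a direct corollary of facts already established, and the only point requiring care is the correct identification of the two quantities $\dim\me\md H=2c+r$ and $\rk\{\ ,\, \}=2c$, the substance of the work having gone into proving those identities earlier.
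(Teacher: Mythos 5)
Your proof is correct and follows exactly the paper's own route: the lemma is stated there as an immediate consequence of the general bound $\trdeg\gA\le\dim\eus P-\tfrac{1}{2}\rk\{\ ,\,\}_{\eus P}$ applied to $\eus P=\me\md H$, using $\dim\me\md H=2c+r$ and Vinberg's identity $\rk\{\ ,\,\}=2c$. Your added sketch of why the general bound holds (restriction to a generic symplectic leaf) is a harmless supplement that the paper simply cites as well known.
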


\begin{conj}   \label{conj:upper}
For any affine homogeneous space $G/H$,  there is a Poisson-commutative subalgebra
$\gA\subset \bbk[\me]^H$ such that $\trdeg\gA=c+r$.
\end{conj}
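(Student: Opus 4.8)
The target $c+r$ is exactly the bound $\tfrac12\bigl(\dim\me\md H+\trdeg\gZ\bigr)$ attained by a Mishchenko--Fomenko \emph{argument-shift} algebra: indeed $\dim\me\md H=2c+r$, while the Poisson centre $\gZ$ has $\trdeg\gZ=\dim\me\md H-\rk\{\ ,\,\}=r$ because $\rk\{\ ,\,\}=2c$. The plan is therefore to take $\gA$ to be $\gZ$ together with an argument shift. The centre $\gZ$ is already Poisson-commutative and contributes $r$; moreover every element of $\gZ$ Poisson-commutes with \emph{all} of $\bbk[\me]^H$, so any further generators we adjoin automatically commute with $\gZ$. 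The remaining task is thus to produce $c$ functions that Poisson-commute among themselves, i.e.\ to exhibit a Lagrangian family on the generic symplectic leaves, which have dimension $\rk\{\ ,\,\}=2c$. (When $c=0$ the bracket vanishes identically, so $\gA=\bbk[\me]^H$ works trivially and recovers the symmetric-variety case, which is the guiding model.)

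Because the bracket on $\bbk[\me]^H=\mathcal S(\g/\h)^H$ given by~\eqref{eq:vinb01} is literally the restriction of the Lie--Poisson bracket of $\g^*$, an argument shift is available in principle. First I would fix a generic $\xi\in\me$ and introduce the \emph{frozen} bracket $\{f_1,f_2\}_\xi(\ap)=\langle\xi,[\textsl{d}_\ap f_1,\textsl{d}_\ap f_2]\rangle$ on $\bbk[\me]^H$; here $\xi\in\me=\h^\perp$ guarantees that, for $H$-invariant $f_i$, the right-hand side is independent of the choice of representatives in $\g$, exactly as for~\eqref{eq:vinb01}. Next I would check that $\{\ ,\,\}$ and $\{\ ,\,\}_\xi$ form a Poisson pencil, a property to be inherited from the corresponding statement on $\g^*$, and let $\gA$ be generated by $\gZ$ together with the Casimirs of all brackets in the pencil $\{\ ,\,\}+t\{\ ,\,\}_\xi$ --- equivalently, by the iterated $\xi$-derivatives $\partial_\xi^{\,k}z$ of elements $z\in\gZ$. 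The standard pencil formalism then yields $\{\gA,\gA\}=0$.

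The computation of $\trdeg\gA$ is where the equidimensionality results of Section~\ref{sect:poisson} enter. The morphism $\boldsymbol{f}\colon\me\to\spe\gZ$ is equidimensional with $\boldsymbol{f}^{-1}(0)=\me\cap\fngs$ and $\dim\spe\gZ=r$; I would use this to control the fibration of $\me$ by the Casimir level sets and the symplectic foliation inside them, and thereby reduce the algebraic independence of the proposed generators to a rank-of-differentials condition at a single generic point, in the spirit of Bolsinov's completeness criterion (the locus where the pencil drops rank should have codimension $\ge 2$). Verifying that this rank equals $c+r$ would finish the proof, since Lemma~\ref{lm:trdeg-A} supplies the matching upper bound and hence maximality.

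The hard part is precisely this last step together with the pencil-compatibility, neither of which is automatic: unlike $\g^*$, the Poisson variety $\me\md H$ is typically singular and its bracket is non-linear, so the classical Mishchenko--Fomenko theorems do not apply verbatim, and the generic symplectic leaf is genuinely hard to describe once $c\ge 2$. A realistic fallback --- paralleling Section~\ref{sect:defect} --- is to first pass to the slice representation at the reductive generic stabiliser $S$ via the Luna--Richardson theorem~\cite{lr79} (as in the proof of Theorem~\ref{thm:s-reg}), which preserves the complexity, and then to settle the cases $c\le 1$ and the slice-reduced small-rank cases by direct construction, accumulating evidence for the general conjecture.
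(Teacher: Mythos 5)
You are attempting to prove what the paper itself states only as a conjecture: the paper establishes it for $c\le 1$ (the Example in Section~\ref{sect:poisson}: take $\gZ$ together with one extra invariant) and, in the special case $\tO=(G\times H)/\Delta_H$, for symmetric subalgebras, fixed-point subalgebras of finite-order automorphisms, and Levi subalgebras, by quoting \cite{py21a,py21b,my19}; no general proof is given. Your numerology is correct --- $c+r=\tfrac12\bigl(\dim\me\md H+\trdeg\gZ\bigr)$, $\rk\{\ ,\,\}=2c$, $\trdeg\gZ=r$, and $\gZ$ Poisson-commutes with everything --- so the target is indeed the Mishchenko--Fomenko bound. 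The problem is that your route to it collapses at its very first step.

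The frozen bracket $\{f_1,f_2\}_\xi(\ap)=\langle\xi,[\textsl{d}_\ap f_1,\textsl{d}_\ap f_2]\rangle$ is \emph{not} well defined on $\bbk[\me]^H$, and your claim that $\xi\in\h^\perp$ guarantees this is false. The well-definedness of \eqref{eq:vinb01} uses essentially that the evaluation point and the differentiation point coincide: replacing a representative $x_1\in\g$ of $\textsl{d}_\ap f_1$ by $x_1+h$, $h\in\h$, changes the value by $\pm\langle \ads(h)\ap,\textsl{d}_\ap f_2\rangle$, which vanishes because $H$-invariance of $f_2$ is differentiated \emph{at} $\ap$. For the frozen bracket the change is $\pm\langle \ads(h)\xi,\textsl{d}_\ap f_2\rangle$, about which invariance of $f_2$ says nothing, since $\xi\ne\ap$. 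Concretely, take $G=SL_2$, $H=T$, so that $\me=\mathrm{span}(e,f)$ with coordinates $a,b$ and $\bbk[\me]^T=\bbk[q]$, $q=ab$: changing the representative of $\textsl{d}_\ap q$ by $h=\mathrm{diag}(1,-1)$ changes $\langle\xi,[\cdot,\cdot]\rangle$ by $2b\xi_e-2a\xi_f$, which is a nonzero function for every $\xi\ne 0$. Likewise your alternative generators $\partial_\xi^{\,k}z$, $z\in\gZ$, are not $H$-invariant, because the constant vector field $\partial_\xi$ commutes with the linear $H$-action only when $\xi$ is an $H$-fixed vector. This is precisely the obstruction the paper records in the Remark closing the subsection on $\tG/\tH$: there $\bbk[\tme]=\gS(\g)$ is a genuine Poisson algebra, so argument shift and compatible brackets are available, whereas for an arbitrary affine $G/H$ the ambient algebra $\bbk[\me]$ carries no natural Poisson structure, so the Mishchenko--Fomenko machinery does not transfer to $\bbk[\me]^H$. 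Your fallback is also not a proof: the $c\le 1$ case is the paper's own Example, and the Luna--Richardson isomorphism $\bbk[\me]^H\simeq\bbk[\me^S]^{N_H(S)}$ of \cite{lr79}, used in Theorem~\ref{thm:s-reg} for dimension counts, is not shown (here or in the paper) to intertwine the relevant Poisson brackets, which is what a reduction of the conjecture to the slice would require.
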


Let $\gZ$ denote the Poisson centre of $(\bbk[\me]^H, \{\ ,\, \})$. By~\cite[Section\,7]{kn90}, $\eus Z$ is 
a polynomial ring and $\trdeg\gZ=r$. Some stronger results can also be found in \cite[Section\,9]{kn94}.

\begin{ex}
For $c=0$, one has $\gZ=\bbk[\me]^H$, and there is nothing to prove.
For $c=1$, $\trdeg\bbk[\me]^H=\trdeg\gZ+2$ and
one can take any $f\in \bbk[\me]^H$ that is not algebraic over $\gZ$.
Then the subalgebra generated by $\gZ$ and $f$ is Poisson-commutative and its transcendence degree
equals $r+1$, as required. Thus, Conjecture~\ref{conj:upper} is true, if $c\le 1$.
\end{ex}

By~\cite[Theorem\,7.6]{kn90}, one has $\gZ=\bbk[M_\co]^G$ and the morphism 
$T^*_{G/H}\to \spe\gZ$ is given by the map $\psi$ in \eqref{diagr-knop}. Therefore, using 
commutative diagrams~\eqref{diagr-knop} and \eqref{diagr-knop2}, we get the chain morphisms
\[
   \me\stackrel{\pi_\me}{\longrightarrow} \me\md H\longrightarrow 
   \spe\gZ=M_\co\md G\stackrel{\tau}{\longrightarrow} \ov{G{\cdot}\me}\md G ,
\]
where $\tau$ is finite, the morphisms $\boldsymbol{f}: \me \to \spe \gZ$ and
$\boldsymbol{\tilde f}: \me \to \ov{G{\cdot}\me}\md G=\tilde M_\co\md G$ are equidimensional, and
$\boldsymbol{\tilde f}^{-1}(\bar 0)=\boldsymbol{f}^{-1}(\bar 0)=\fngs\cap\me$.

\subsection{The case of $\tG/\tH$}
For the homogeneous spaces of the form $\tG/\tH=(G\times H)/\Delta_H$, one can say more. Recall that 
$\tr=\rk\g+\rk\h$, $\tc=\dim U-\dim B_H$,  and $\tme\simeq \g^*$. Here Lemma~\ref{lm:trdeg-A} says 
that if $\gA\subset \bbk[\g^*]^H=\gS(\g)^H$ is Poisson-commutative, then
\beq    \label{eq:upper}
       \trdeg \gA\le\tc+\tr=\frac{1}{2}(\dim\g-\dim\h+\rk\g+\rk\h). 
\eeq
In this setting, the existence of $\gA$ such that $\trdeg\gA=\tc+\tr$ has been proved for several classes 
of reductive subalgebras $\h$ :

\textbullet \ \ $\h=\g^\sigma$ is a symmetric subalgebra~\cite[Theorem\,2.7]{py21a};

\textbullet \ \ $\h=\g^\theta$, where $\vartheta$ is an automorphism of $\g$ of finite order 
$\ge 3$~\cite[Theorem\,3.10]{py21b}. Here one also needs the condition that a certain contraction 
of  $\g$ associated with $\vartheta$, denoted $\g_{(0)}$, has the same index as $\g$. It should be noted,
however, that this condition has been verified in many cases, and it is likely that this condition always 
holds.

\textbullet \ \ $\h$ is the centraliser of a semisimple element of $\g$~\cite[Lemma\,2.1]{my19}, i.e., $\h$ is a Levi subalgebra of $\g$. 

Note also that if $\h\subset\g$ has a non-trivial centre, then, for any $\rr$ such that 
$[\h,\h]\subset \rr\subset \h$, we have $\dim\h-\rk\h=\dim\rr-\rk\rr$. Therefore, if a Poisson-commutative 
subalgebra $\gA\subset \gS(\g)^\h$ has the maximal transcendence degree, then it follows 
from~\eqref{eq:upper} that $\gA$ has the maximal transcendence degree as subalgebra of the larger 
Poisson algebra $\gS(\g)^\rr$.

{\bf Remark.} An advantage of this case is that $\bbk[\tme]=\gS(\g)$ is a Poisson algebra.
Therefore, one can construct `large' Poisson-commutative subalgebras in $\gS(\g)^H$ using compatible Poisson brackets and Mishchenko-Fomenko 
subalgebras of $\gS(\g)$, see~\cite{my19,py21a,py21b}. But for an arbitrary
affine $G/H$, the algebra $\bbk[\me]$ does not possess a natural Poisson structure. 

\subsection{A more general setting}
\label{subs:my19}
Let $R\subset Q$ be arbitrary connected affine algebraic groups. Then $Q{\times} R/\Delta_R\simeq Q$
is an affine homogeneous space of  $Q{\times} R$ and the coisotropy representation of $R\simeq \Delta_R$ is isomorphic to $(R:\q^*)$. Here we are led to consider Poisson-commutative subalgebras
of the Poisson algebra $\gS(\q)^R=\gS(\q)^\rr$. 

Our luck is that this problem (without connection to coisotropy representations) has been considered in~\cite{my19}, where an upper bound on $\trdeg\gA$ similar to \eqref{eq:upper} is given. The only 
difference is that the {\it rank\/} of a Lie algebra has to be replaced with the {\it index\/}. (Recall that 
$\rk\q=\ind\q$ whenever $\q$ is reductive.)
That is, if $\gA\subset \gS(\q)^\rr$ and $\{\gA,\gA\}=0$, then
\beq    \label{eq:upper2}
       \trdeg \gA\le \frac{1}{2}(\dim\q-\dim\rr+\ind\q+\ind\rr) , 
\eeq
see~\cite[Prop.\,1.1]{my19}. It is also shown in~\cite{my19} that if $\rr=\q^\xi$ is the stabiliser of 
$\xi\in\q^*$ under the coadjoint representation of $\q$ and $\ind\q^\xi=\ind\q$, then this bound is achieved in many cases. In particular, if $\q$ is reductive, then this 
happens for any $\xi$. 

However, results of \cite{kn90} do not apply in this setting, unless both groups $Q$ and $R$ are reductive.

\end{document}